\DeclareMathOperator*{\forkindep}{\raise0.2ex\hbox{\ooalign{\hidewidth$\vert$\hidewidth\cr\raise-0.9ex\hbox{$\smile$}}}}
\let\@wraptoccontribs\wraptoccontribs
\DeclareMathOperator{\ord}{ord}
\begin{document}

\newtheorem{thm}{Theorem}[section]
\newtheorem{theorem}[thm]{Theorem}
\newtheorem{lemma}[thm]{Lemma}
\newtheorem{claim}[thm]{Claim}
\newtheorem{definition}[thm]{Definition}
\newtheorem{corollary}[thm]{Corollary}
\newtheorem{aside}[thm]{Aside}
\newtheorem{corr}[thm]{Corollary}
\newtheorem{fact}[thm]{Fact}
\newtheorem {cl}[thm]{Claim}
\newtheorem*{thmstar}{Theorem}
\newtheorem{prop}[thm]{Proposition}
\newtheorem*{propstar}{Proposition}
\newtheorem {lem}[thm]{Lemma}
\newtheorem*{lemstar}{Lemma}
\newtheorem{conj}[thm]{Conjecture}
\newtheorem{question}[thm]{Question}
\newtheorem*{questar}{Question}
\newtheorem{ques}[thm]{Question}
\newtheorem*{conjstar}{Conjecture}
\theoremstyle{remark}
\newtheorem{rem}[thm]{Remark}
\newtheorem{np*}{Non-Proof}
\newtheorem*{remstar}{Remark}
\theoremstyle{definition}
\newtheorem{defn}[thm]{Definition}
\newtheorem*{defnstar}{Definition}
\newtheorem{exam}[thm]{Example}
\newtheorem{example}[thm]{Example}
\newtheorem*{examstar}{Example}
\newcommand{\pd}[2]{\frac{\partial #1}{\partial #2}}
\newcommand{\pp}{\partial }
\newcommand{\pdtwo}[2]{\frac{\partial^2 #1}{\partial #2^2}}
\def\Ind{\setbox0=\hbox{$x$}\kern\wd0\hbox to 0pt{\hss$\mid$\hss} \lower.9\ht0\hbox to 0pt{\hss$\smile$\hss}\kern\wd0}
\def\Notind{\setbox0=\hbox{$x$}\kern\wd0\hbox to 0pt{\mathchardef \nn=12854\hss$\nn$\kern1.4\wd0\hss}\hbox to 0pt{\hss$\mid$\hss}\lower.9\ht0 \hbox to 0pt{\hss$\smile$\hss}\kern\wd0}
\def\ind{\mathop{\mathpalette\Ind{}}}
\def\nind{\mathop{\mathpalette\Notind{}}} 
\newcommand{\m}{\mathbb }
\newcommand{\mc}{\mathcal }
\newcommand{\mf}{\mathfrak }
\newcommand{\is}{^{p^ {-\infty}}}
\newcommand{\codim}{\operatorname{codim}}
\newcommand{\Gal}{\operatorname{Gal}}
\newcommand{\Num}{\operatorname{Num}}
\newcommand{\Cl}{\operatorname{Cl}}
\newcommand{\Div}{\operatorname{Div}}
\newcommand{\Ann}{\operatorname{Ann}}
\newcommand{\Frac}{\operatorname{Frac}}
\newcommand{\lcm}{\operatorname{lcm}}
\newcommand{\height}{\operatorname{ht}}
\newcommand{\Der}{\operatorname{Der}}
\newcommand{\Pic}{\operatorname{Pic}}
\newcommand{\Sym}{\operatorname{Sym}}
\newcommand{\Proj}{\operatorname{Proj}}
\newcommand{\characteristic}{\operatorname{char}}
\newcommand{\Spec}{\operatorname{Spec}}
\newcommand{\Hom}{\operatorname{Hom}}
\newcommand{\res}{\operatorname{res}}
\newcommand{\Aut}{\operatorname{Aut}}
\newcommand{\length}{\operatorname{length}}
\newcommand{\Log}{\operatorname{Log}}
\newcommand{\Set}{\operatorname{Set}}
\newcommand{\Fun}{\operatorname{Fun}}
\newcommand{\id}{\operatorname{id}}
\newcommand{\Gp}{\operatorname{Gp}}
\newcommand{\Ring}{\operatorname{Ring}}
\newcommand{\Mod}{\operatorname{Mod}}
\newcommand{\Mor}{\operatorname{Mor}}
\newcommand{\SheafHom}{\mathcal{H}om}
\newcommand{\pre}{\operatorname{pre}}
\newcommand{\coker}{\operatorname{coker}}
\newcommand{\acl}{\operatorname{acl}}
\newcommand{\dcl}{\operatorname{dcl}}
\newcommand{\tp}{\operatorname{tp}}
\newcommand{\dom}{\operatorname{dom}}
\newcommand{\val}{\operatorname{val}}
\newcommand{\Aa}{\mathbb{A}}
\newcommand{\Qq}{\mathbb{Q}}
\newcommand{\Rr}{\mathbb{R}}
\newcommand{\Zz}{\mathbb{Z}}
\newcommand{\Nn}{\mathbb{N}}
\newcommand{\Cc}{\mathbb{C}}
\newcommand{\Ii}{\mathbb{I}}
\newcommand{\Gg}{\mathbb{G}}
\newcommand{\Uu}{\mathbb{U}}
\newcommand{\Mm}{\mathbb{M}}
\newcommand{\Ff}{\mathbb{F}}
\newcommand{\Pp}{\mathbb{P}}

\def\pr{\operatorname{pr}}
\def\H{\operatorname{H}}
\def\bu{{\mathbf{u}}}
\def\bv{{\mathbf{v}}}
\def\bx{{\mathbf{x}}}
\def\V{\mathbb{V}}
\def\P{\mathbb{P}}
\def\L{\mathbb{L}}
\def\Y{\mathbb{Y}}
\def\I{\mathbb{I}}
\def\den{{\operatorname{den}}}
\def\num{{\operatorname{num}}}
\def\card{{\operatorname{card}}}
\def\CI{{\mathcal{I}}}
\def\ld{{\operatorname{ld}}}
\def\init{{\operatorname{init}}}
\def\Sep{\operatorname{S}}
\def\chow{{\operatorname{Chow}}}
\def\denom{{\operatorname{denom}}}
\def\sat{{\operatorname{sat}}}
\def\asat{{\operatorname{asat}}}
\def\ff{\mathcal{F}}
\def\VB{{\mathbf{V}}}
\def\id{{\operatorname{id}}}
\def\lv{{\operatorname{lv}}}
\def\trdeg{\operatorname{tr.deg}}
\def\dtrdeg{\operatorname{d.tr.deg}}

\title[Differential Chow varieties]{Differential Chow varieties exist} 
\author{James Freitag}
\email{freitag@math.berkeley.edu}
\address{University of California, Los Angeles \\
Department of Mathematics \\
520 Portola Plaza \\
Math Sciences Building 6363 \\
Los Angeles, CA 90095-1555 \\
USA}

\author{Wei Li}
\email{liwei@mmrc.iss.ac.cn}
\address{Key Lab of Mathematics Mechanization \\
Academy of Mathematics and Systems Science \\
Chinese Academy of Sciences \\
No. 55 Zhongguancun East Road, 100190, Beijing \\
China}

\author{Thomas Scanlon}
\email{scanlon@math.berkeley.edu}
\address{University of California, Berkeley \\
Department of Mathematics \\
Evans Hall \\
Berkeley, CA 94720-3840 \\
USA}

\contrib[with an appendix by]{William Johnson}

\thanks{JF is partially supported by an NSF MSPRF. WL is partially supported by NSFC Grant 11301519 and thanks 
the University of California, Berkeley for providing a good research environment during her appointment as 
a Visiting Scholar. TS is partially supported by NSF Grant DMS-1363372.}

\begin{abstract} The Chow variety is a parameter space for effective algebraic cycles on $\mathbb P^n$ (or $\mathbb{A}^n$) of given dimension and degree. 
We construct its analog for differential algebraic cycles on $\mathbb{A}^n$, answering a question of~\cite{GDIT}. 
The proof uses the construction of classical algebro-geometric Chow varieties, the theory of characteristic sets of 
differential varieties and algebraic varieties, the theory of prolongation spaces, and the theory of differential Chow forms. In the course of the proof several 
definability results from the theory of algebraically closed fields are required.
Elementary proofs of these results are given in the appendix. 

\vskip 5pt
{\bf Keywords:}  Differential Chow variety, Differential Chow form, Prolongation admissibility, Model theory,  Chow variety   
\end{abstract}
\maketitle

\section{Introduction}

For simplicity in the following discussion, let $k$ be an algebraically closed field
and $\mathbb{P}^n$ the projective space over $k$. 
The $r$-cycles on $\mathbb{P}^n$  are elements 
of the free $\m Z$-module generated by irreducible varieties in $\mathbb{P}^n$  of dimension $r$. 
If the coefficients are taken over $\m N$ then the cycle is said to be positive or effective. 
For a given effective cycle $\sum n_i V_i $, the degree of $\sum n_i V_i$ is given by $\sum n_i 
\cdot \deg (V_i)$ where the degree of the variety is computed with respect to 
some fixed very ample line bundle on $V$.  
The positive $r$-cycles of degree $d$ on $\mathbb{P}^n$ are parameterized by a $k$-variety, called the Chow variety. 
For background on Chow varieties and Chow forms, see~\cite{Chow}
(or~\cite{harris1992algebraic} or~\cite{dalbec1995introduction} for a modern exposition). The purpose of this article is to carry out the 
construction of the differential algebraic analog of the Chow variety, whose construction was begun in~\cite{GDIT}, but was completed only in 
certain very special cases. For our purposes, one can view Chow varieties and their differential counterparts as parameter spaces for cycles 
with particular characteristics (degree and codimension in the algebraic case). The algebraic theory of Chow varieties also has numerous 
applications and deeper uses (e.g. Lawson (co)homology~\cite{friedlander1989homology} and various counting problems in 
geometry~\cite{eisenbud20103264}). 

Working over a differentially closed field $K$ with a single derivation and 
natural number $n$, the 
group of differential cycles of dimension $d$ and order $h$ in affine $n$-space over
$K$ is the free $\m Z$-module generated by irreducible differential 
subvarieties $W  \subseteq  {\mathbb A}^n$ so that the dimension, $\dim (W)$, is $d$ and the order, $\ord (W)$, is $h$. 
The differential cycles of index $(d, h, g, m)$ are those 
effective cycles with leading differential degree $g$ and differential degree $m$. These invariants have a very natural definition and are a suitable notion 
of degree for differential cycles; see section \ref{Chowsection} for the definitions. Our main result establishes the existence of a 
differential variety which parameterizes this particular set of effective differential cycles of ${\mathbb A}^n$. 

There are various foundational approaches to differential algebraic geometry (e.g. the scheme-theoretic approaches~\cite{KovacicKolchin} or the 
Weil-style definition of an abstract differential algebraic variety~\cite{KolchinDAG}). Such abstract settings will not be pertinent here, since 
we work exclusively with differential algebraic subvarieties of affine spaces over a differential field. In this setting, beyond the basic development of the 
theory, there are two approaches relevant to our work. The first is the classical theory using characteristic sets~\cite{KolchinDAAG}. We also use 
the more recent geometric approach using the theory of jet and prolongation spaces~\cite{MSJETS}. This approach allows one to replace a 
differential algebraic variety by an associated sequence of algebraic varieties, but owing to the Noetherianity of the Kolchin topology, some 
finite portion of the sequence contains all of the data of the sequence. This allows the importation of various results and techniques from the 
algebraic category. 

We use the two approaches in the following manner. We use classical algebraic Chow varieties to parameterize prolongation sequences. We then use 
the dominant components of these prolongation sequences to parameterize the \emph{characteristic sets} of differential algebraic cycles with 
given index. There are essentially two steps to the construction. First,  degree bounds are used to restrict the space of Chow 
varieties in which we must look for the points which generate the prolongation sequences parameterizing differential cycles of a given index; 
this development uses basic intersection theory (e.g.~\cite{Heintz}) and the theory of differential Chow forms~\cite{GDIT}. Within the appropriate 
Chow varieties which parameterize these prolongation spaces, only a subset of the points will correspond to differential cycles with the 
specified index. We show that this collection of points (such that the the dominant component  of the differential variety corresponding to the 
prolongation sequence generated by each irreducible component of the algebraic cycle represented by this point has the specified index, to be 
defined in Definition \ref{def-index}) is in fact a differentially constructible subset.

To say more precisely what we mean by the existence of a differential Chow variety we should speak about 
representable functors.   For a fixed ambient dimension $n$ and 
index $(d, h, g, m)$ we may associate to each differential field $k$ the set of differential
cycles of index $(d, h, g, m)$ in affine $n$-space over $k$ thereby obtaining a functor
$\Cc(n, d, h, g, m): \delta\text{-}\operatorname{Field} \to \operatorname{Set}$ from 
the category of differential fields to the category of sets.  (On morphisms of 
differential fields, this functor is given by base change.)  Of course, each differentially constructible set 
$X$ (defined over $\mathbb Q$) gives the functor of points  
$h_X: \delta\text{-}\operatorname{Field} \to \operatorname{Set}$ determined by
$k \mapsto X(k)$.    We shall show that $\Cc(n, d, h, g, m)$
is representable, meaning that there is a differentially constructible set which we shall 
call $\delta\text{-}\chow(n, d, h, g, m)$ and a natural isomorphism
between the functors $\Cc(n, d, h, g, m)$ and 
$h_{\delta\text{-}\chow(n, d, h, g, m)}$.  In fact, we prove 
a little more in that we produce a universal family of cycles over $\delta\text{-}\chow(n, d, h, g, m)$
so that the natural transformation from $\delta\operatorname{-Chow}(n, d, h, g, m)$ to 
$\Cc(n, d, h, g, m)$ is given by taking fibers of this family.   

The construction of differential Chow varieties is related to \emph{canonical 
parameters} in the sense of model theory. In the theory of 
differentially closed fields, canonical parameters manifest themselves
as the generators of fields of definition of differential varieties. In 
recent 
years, detailed analyses of canonical parameters have been undertaken in 
analogy with results of Campana~\cite{campana1980algebricite} and 
Fujiki~\cite{fujiki1982douady} from compact complex manifolds (for instance, 
see~\cite{PillayZiegler, chatzidakis2012note, moosa2008canonical}). 
The following is essentially pointed out by Pillay and Ziegler~\cite{PillayZiegler}. Let 
$K$ be a differential field and $ x$ an $n$-tuple of elements from 
some differential field extension. Let $X$ be the differential locus of 
$x$ over $K$.  
Let $L$ be a differential field extension of $K$ and let $Z$ be the differential locus of $x$ over 
$L$.  Let $b$ be a generator for the 
differential field of definition of $Z$ over $K$.  That is, 
there is some differential algebraic subvariety $Y  \subseteq   {\mathbb A}^n \times {\mathbb A}^m$
so that $b \in {\mathbb A}^m(L)$, $(x,b) \in Y_b$ and the second projection map 
$\pi:Y \to {\mathbb A}^m$ is differentially birational over its image.   
Consider $\mc Y := {_{ x}}Y$, the fiber of $Y$ over $ x$ via the first projection 
$Y \to {\mathbb A}^n$, which is subvariety of a certain differential Chow variety of $X$ in the
sense of this paper.   The main result of~\cite{PillayZiegler} is that $\mc Y$ is internal to the 
constants.  One could certainly expand upon these observations to give statements about the 
structure of differential Chow varieties, but we will not pursue these matters in further detail in this paper. 

In~\cite[page 581]{PillayZiegler}, Pillay and Ziegler write of the above situation, 
\begin{quote} We are unaware of any systematic development of machinery and language (such as ``differential Hilbert spaces") in differential algebraic geometry which
is adequate for the geometric translation above. This is among the reasons
why we will stick with the language of model theory in our proofs below.
The issue of algebraizing the content and proofs is a serious one which will
be considered in future papers.
\end{quote}
\noindent Subsequent work by Moosa and Scanlon~\cite{MSJETS} did algebraize and generalize much of the work by Pillay and Ziegler, but no systematic development of differential Hilbert schemes or differential Chow varieties appears to have occurred in the decade following Pillay and Ziegler's work. One should view~\cite{GDIT} as the beginning of such a systematic development, where the theory of the differential Chow form was developed, and the existence of differential Chow varieties was established in certain very special cases. In~\cite[section 5]{GDIT}, the authors write that they are unable to prove the existence of the differential Chow variety in general. The work here is an extension of~\cite{GDIT}, in which we will establish the existence of the differential Chow variety in general, answering the most natural question left open by~\cite{GDIT}. As we have pointed out above, our general technique is also the descendant of a line thinking that originated (at least in the model theoretic context) with Pillay and Ziegler's work on jet spaces and the linearization of differential equations. 

While the theory of canonical parameters provides an ad hoc solution to the problem of 
parametrizing the differential subvarieties of a given affine space, the theory of 
differential Chow varieties is superior in some respects.   Firstly, the theory of 
differential Chow varieties provides a natural stratification of the parameter spaces 
via the discrete index invariant.  Secondly, the conceptual definition of the differential Chow 
varieties through the notion of a differential Chow form permits one to 
effectively compute the differential Chow coordinates of a differential variety.  Indeed, 
this is done in detail for prime differential ideals in~\cite{li2014}.   
To compute a canonical parameter using elimination of
imaginaries relative to the theory of differentially closed fields of characteristic zero 
would require an appeal to algebraic invariant theory which itself could be made only after 
computing nontrivial bounds on the order of the generators of the eventual quotient. Even after this process is completed, the family of varieties one 
obtains is not characterized by differential algebraic invariants; this is not the case with the differential Chow varieties of this work, 
which \emph{are} characterized by specifying natural differential algebraic invariants.  On the 
other hand, there is a cost to working with differential Chow coordinates: one must prove that the 
Chow coordinates of differential varieties with a given index actually form a differentially constructible
set; this is precisely what we do in this paper.


The rest of the paper is organized as follows.
In section \ref{background}, we give background definitions and some preliminary results which we use later in the paper. In addition, we describe the 
relationship of the problems we consider to the Ritt problem. Following this interlude, we prove the results which eventually allow us to work around the 
issues involved in the Ritt problem (whose solution would allow for a simplification of the proofs of the results in this paper). 
In section \ref{Chowsection}, we describe the necessary background from the classical theory of Chow varieties and introduce the theory of differential Chow forms. 
In section \ref{confine}, we establish various bounds on the order and degree of the 
varieties we consider using the theory of differential Chow forms. In section \ref{deltaChow-1}, we establish the existence of differential Chow 
varieties, proving the main result of the paper.

The appendix gives elementary proofs of several facts from algebraic geometry which we
 require. The facts proved in the appendix are well known and are frequently used in model
 theory (for instance, see the citation in appendix 3.1 of~\cite{HrIt}), however, several of the proofs in the appendix seem to be new. Constructive proofs of the results in the appendix (which give 
 additional information about certain bounds, rather than simply proving that a certain 
 bound exists) are much more involved (see~\cite{seidenberg1974constructions}, which 
 corrected and modernized some of the proofs given in~\cite{hermann1926frage}). Various other non-constructive 
 proofs of the theorem are given in the literature~\cite[15.5.3]{EGA4}~\cite[where a 
 nonstandard approach is taken]{vandenDriesbounds}~\cite[where a model theoretic 
 approach is taken to give an elementary proof]{hrushovski1992strongly}.

\section{Preliminaries and Prolongations} \label{background}
We fix $\mc U$ a saturated differentially closed field with a single derivation, $\delta$.   Implicitly, 
all differential fields we consider are subfields of $\mc U$. If $V$ is 
an algebraic variety or a differential variety, then an expression of the
form ``$a \in V$'' is shorthand for ``$a \in V(\mc U)$''.   
Throughout, $K$ will be a small differential subfield of $\mc U$ and
$\delta$ denotes the distinguished derivation on $K$, $\mc U$, or, indeed,
any differential ring that we consider.  Unless explicitly stated to the 
contrary, all varieties and differential varieties are defined over $K$  and have coordinates in $\mathcal{U}$. 
By convention, $\mathbb{A}^n$ and $\mathbb{A}^n(\mathcal{U})$ stand for the affine space with coordinates in $\mathcal{U}$.

If $f:X \to Y$ is a morphism of varieties,
then by $f(X)$ we mean the scheme theoretic image of $X$ under $f$.  That is, 
$f(X)$ is the smallest subvariety $Z$  of $Y$ for which $f$ factors through the inclusion 
$Z \hookrightarrow Y$.  On points, $f(X)$ is the Zariski closure of 
$\{ f(a) ~:~ a \in X(\mc U) \}$.  

We write $K \{ x_1, \ldots, x_n \}$ for the differential polynomial ring 
in the variables $x_1, \ldots, x_n$ over $K$.   For $m \in \m N$, we write 
$$K \{ x_1, \ldots, x_n \}_{\leq m} = 
K [x_i^{(j)} ~:~ 1 \leq i \leq n, 0 \leq j \leq m]$$ for the 
subring of 
differential polynomials of order at most $m$ where we have written 
$x_i^{(j)}$ for $\delta^j x_i$. We also use $x_i^{[m]}$ to denote the set $\{x_i^{(j)}:j=0,1,\ldots,m\}$
and sometimes denote $K \{ x_1, \ldots, x_n \}_{\leq m}=K[x_1^{[m]},\ldots,x_n^{[m]}]$. 

If $\mathcal{I}  \subseteq   K \{ x_1, \ldots, x_n \}$ is a differential ideal, then we write $\V(\mathcal{I})$ for
the differential subvariety of $\m A^n(\mc U)$ defined by the vanishing of all 
$f \in \mathcal{I}$, and for $S  \subseteq   \m A^n (\mc U)$, we 
let $\I(S)  \subseteq   K \{ x_1, \ldots, x_n \}$ be the differential ideal
of all differential polynomials over $K$ vanishing on $S$.  On the other hand, if $I  \subseteq   K [ x_1, \ldots , x_n ]$ is an ideal, then we write $V(I)$ for the variety defined by the vanishing of all $f \in I$, and for $S  \subseteq   \m A^n (\mc U), $ we write $I(S)$ for the ideal of polynomials in $K[x_1, \ldots , x_n]$ which vanish on $S$.

In general, if 
$R$ is a commutative ring, we write $\mc Q (R)$ for its total ring of fractions.
When $R$ is a differential ring, so is $\mc Q (R)$.  For a differential variety $V$, we write 
$K \langle V \rangle$ for 
$\mc Q ( K \{ x_1, \ldots, x_n \} / \I(V) )$.   When $\I(V)$ is prime, that is, when 
$V$ is irreducible, this is called the differential function field of $V$.
For $S  \subseteq   
K \{ x_1, \ldots, x_n \}$ we write $(S)$ for the ideal generated 
by $S$ and $[S]$ for the \emph{differential} ideal generated by $S$. 
When $S = \{ f \}$ is a singleton, we write $(f) := (S)$ and $[f] := [S]$.  
Likewise, we write $ \V(f)$ for $\V( [f] )$.  

We sometimes speak about ``generic points''.  These should be understood in the 
sense of Weil-style algebraic (or differential algebraic) geometry.  That is,
if $V$ is a variety (respectively, differential algebraic variety) over $K$, then 
$\eta \in V(\mc U)$ is generic if there is no proper subvariety (respectively,
differential subvariety) $W \subsetneq V$ defined over $K$ with 
$\eta \in W(\mc U)$.  Provided that $V$ is irreducible, this is equivalent 
to asking that the field $K(\eta)$ (respectively, differential field
$K \langle \eta \rangle$) be isomorphic over $K$ to $K(V)$ (respectively, 
$K \langle V \rangle$).  We will also say that a point is a generic point of some 
ideal (or differential ideal) if it is a generic point of the corresponding variety (or differential
variety). 
  
 \subsection{Methods of algebraic and differential characteristic sets }
In this paper, the Wu-Ritt characteristic set method is a basic tool
for establishing a correspondence between differential algebraic cycles 
and algebraic cycles satisfying certain conditions. In
this section we recall the definition and basic properties of 
algebraic  and  differential characteristic sets~\cite{Wu, Ritt}.

First, we introduce the algebraic characteristic set method. 
Consider the polynomial ring $K[x_1,\ldots,x_n]$ and fix an ordering on 
$x_1,\ldots,x_n$, say, $x_1<\cdots < x_n$. 
Given $f\in K[x_1,\ldots,x_n]\backslash K$, the {\em leading variable} of $f$ is the greatest variable $x_k$ 
effectively appearing in $f$, denoted by $\lv(f)$. 
Regarding $f$ as a univariate polynomial in $\lv(f)$, the leading coefficient of $f$ is called the {\em initial} of $f$, denoted by $\init(f)$. 
A sequence of polynomials $\langle A_1, \ldots, A_r\rangle$ is said to be an {\em ascending 
chain}, if   either
\begin{enumerate}
\item	$r = 1$ and $A_1\neq 0$,  or
\item  all the $A_i$ are nonconstant,  $\lv(A_i) <\lv(A_j)$ for $1\leq i < j\leq r$ and $\deg(A_k, \lv(A_k)) > \deg(A_m, \lv(A_k))$ for $m > k$. 
(Here $\deg(f,y)$ denotes the degree of $f$ regarded as a polynomial in the variable $y$.)
\end{enumerate}

Given two polynomials $f$ and $g$,  $f$ is said to be of higher rank than $g$ and denote $f>g$,
 if either $\lv(f)>\lv(g)$, or $x=\lv(f)=\lv(g)$ and $\deg(f,x)>\deg(g,x)$.  
If $\lv(f)=\lv(g)=x$ and $\deg(f,x)=\deg(g,x)$, then we say $f$ and $g$ have the same rank.
Suppose $\mathcal{A}= \langle A_1, \ldots, A_r \rangle$
and $\mathcal{B}= \langle B_1, \ldots, B_s \rangle$ 
are two ascending chains in $K[x_1,\ldots,x_n]$. 
We say $\mathcal{A}$ is of lower rank than $\mathcal{B}$, 
denoted by $\mathcal{A}\prec\mathcal{B}$,  if either 
\begin{enumerate}
\item there exists $k \leq \min\{r,s\}$ such that for $i<k$, $A_i$ and $B_i$ have the same rank, and 
 $A_k<B_k$, or
\item $r>s$, and for each $i\leq s$, $A_i$ and $B_i$  have the same rank.
\end{enumerate}
 
Given an ideal $\mathcal{I}$ in $K[x_1,\ldots,x_n]$, 
an ascending chain contained in $\mathcal{I}$
which is of lowest rank is called an algebraic  characteristic set of 
$\mathcal{I}$. If $V \subseteq   \m A^n$ is an irreducible variety, then an algebraic  characteristic set of 
$V$ is defined as a characteristic set of its corresponding prime ideal $I(V)$.  

Let $\mathcal {A}= \langle A_{1} ,A_{2}, \ldots, A_{t} \rangle$ be an ascending chain.
We call $\mathcal{A}$  an {\em irreducible  ascending chain} if
for any $1\leq i\leq t$, there can not exist any relation of the
form $$T_iA_i=B_iC_i,\mod \,(A_1,\ldots,A_{i-1})$$
where $B_i,C_i$ are polynomials with the same leader as $A_i$, $T_i$
is a polynomial with lower leader than $A_i$, and $B_i, C_i, T_i$
are reduced with respect to $A_1,\ldots,A_{i-1}$ (\cite{Wu}). In other
words, an ascending chain $\mathcal {A}$ is irreducible
if and only if  there exist no polynomials $P$ and $Q$ which are
reduced with respect to $\mathcal A$ and $PQ\in\asat(\mathcal A)=(\mathcal
{A}):I_\mathcal{A}^\infty$, where $I_\mathcal{A}^\infty$ stands for
the set of all products of powers of $\init({A_i})$.
 By \cite[p.89]{Ritt},  for  an ascending chain $\mathcal{A}$ to be a characteristic set of a prime polynomial ideal,
it is  necessary and sufficient  that $\mathcal{A}$ is irreducible.

We now return to ordinary differential polynomial algebra and introduce differential characteristic methods for differential polynomials.
Fix a sequence of differential variables $x_1, x_2, \ldots, x_n$ and consider the differential polynomial ring $K\{x_1,\ldots,x_n\}$.
A differential ranking  is a total order  $\prec$ on 
the set $\Theta := \{ x_i^{(j)}: \,{i\leq n, j \in \m N}\}$
satisfying
\begin{itemize}
\item For all $\theta \in \Theta$,  $\delta \theta \succ\theta$ and
\item If $\theta_1 \succ\theta_2$, then  $\delta\theta_1 \succ \delta\theta_2$.
\end{itemize}

An orderly ranking is a differential ranking which satisfies in 
addition
\begin{itemize}
\item If $k > \ell$, then  $\delta^k x_i \succ \delta^\ell x_j$ for all $i$ and $j$.
\end{itemize}

Throughout the paper, we fix some orderly ranking $\mathscr R$, and when we talk about characteristic set methods  in the polynomial ring $K\{x_1,\ldots,x_n\}_{\leq \ell}$, the ordering on $(x_{i}^{(j)})_{1\leq i\leq n;j\leq \ell}$ induced by $\mathscr R$ is fixed.

Let $f$ be a differential polynomial in $K \{x_1, \ldots, x_n \}$.
The \emph{leader} of $f$, denoted by $\ld(f)$,  is the greatest  $v\in\Theta$ with respect to $\prec$ which appears effectively in $f$.
Regarding $f$ as a univariate polynomial in $\ld(f)$, its leading coefficient is called the \emph{initial} of $f$, denoted by $\init(f)$,
and the partial derivative of $f$ with respect to $\ld(f)$ is called the  \emph{separant} of $f$, denoted by $\Sep_{f}$.
For any two differential polynomials $f$, $g$ in $K \{x_1, \ldots, x_n\}$, $f$ is said to be of  lower rank than $g$, denoted by $f < g$, if 
\begin{itemize}
	\item  $\ld(f) \prec \ld(g)$ or 
	\item  $\ld(f)=\ld(g)$ and $\deg(f,\ld(f))<\deg(g,\ld(f))$.
\end{itemize}

The differential polynomial $f$ is said to be reduced with respect to $g$ if no proper derivative of $\ld(g)$ appears in $f$ and $\deg(f,\ld(g)) < \deg(g,\ld(g))$. 

Let $\mathcal {A}$ be a set of differential polynomials. Then $\mathcal {A}$ is said to 
be an auto-reduced set  if each differential polynomial in $\mathcal {A}$ is reduced 
with respect to any other element of $\mathcal {A}$. Every auto-reduced set is 
finite~\cite{Ritt}.

Let $\mathcal {A}$ be an auto-reduced set. 
We denote by $\H_{\mathcal {A}}$ the set of all initials and 
separants of $\mathcal {A}$ and by  $\H_\mathcal {A}^\infty$  the minimal multiplicative set 
containing $\H_\mathcal {A}$. The  saturation differential ideal  of $\mathcal {A}$ is 
defined to be
$$\sat(\mathcal {A})=[\mathcal {A}]:\H_{\mathcal {A}}^\infty=
\{f \in K\{x_1, \ldots, x_n  \} ~:~ \exists h\in \H_{\mathcal {A}}^\infty  \text{ for 
which } hf \in [\mathcal {A}] \} \text{ .}$$

An auto-reduced set $\mathcal {C}$ contained in a differential polynomial 
set $\mathcal {S}$ is said to be a \emph{characteristic set} of $\mathcal {S}$ 
if $\mathcal {S}$ does not contain any nonzero element reduced with respect to
 $\mathcal {C}$. A characteristic set $\mathcal {C}$ of a differential ideal 
 $\mathcal{I}$ reduces all elements of $\mathcal{I}$ to zero. Furthermore, if 
 $\mathcal{I}$ is prime, then $\mathcal{I}=\sat(\mathcal {C})$.

We can define an auto-reduced set
to be {\em irreducible} if when considered as an algebraic
ascending chain in the underlying polynomial ring, it is
irreducible. We have (\cite[p.107]{Ritt})

\begin{lem}\label{th-sat} Let $\mathcal {A}$  be an auto-reduced set. Then
a necessary and sufficient condition for $\mathcal {A}$ to be a
characteristic set of a prime differential ideal (or an irreducible differential variety) is that $\mathcal
{A}$ is irreducible. Moreover, in the case $\mathcal {A}$ is
irreducible, $\sat(\mathcal {A})$ is prime and $\mathcal {A}$ is a differential characteristic
set of it. 
\end{lem}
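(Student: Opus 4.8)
The plan is to derive both the characterization and the ``moreover'' clause from the analogous \emph{algebraic} statement (Ritt, p.\,89, quoted above): an ascending chain is irreducible if and only if it is a characteristic set of a prime polynomial ideal, in which case that ideal is $\asat(\mathcal A)=(\mathcal A):I_{\mathcal A}^\infty$. The bridge between the algebraic and differential pictures will be a generic-zero construction. Write $\mathcal A=\langle A_1,\dots,A_r\rangle$ with the $A_i$ ordered by leader, and let $Y$ be the finite set of variables $x_i^{(j)}$ occurring in $\mathcal A$. First I would record three elementary facts: (i) the initials $\init(A_i)$ and separants $\Sep_{A_i}$ are nonzero and reduced with respect to $\mathcal A$ in both the algebraic and differential senses, hence lie in no differential ideal having $\mathcal A$ as a characteristic set, and (when $\mathcal A$ is algebraically irreducible) in no such algebraic ideal either; (ii) since $\mathcal A$ is auto-reduced, no $\ld(A_i)$ is a proper derivative of another leader, so $Y$ contains no proper derivative of a leader, and consequently a polynomial in $K[Y']$ (for any $Y'\supseteq Y$ obtained by adjoining parametric variables) is algebraically reduced with respect to $\mathcal A$ exactly when it is differentially reduced; (iii) $\asat(\mathcal A)\subseteq\sat(\mathcal A)$, because $(\mathcal A)\subseteq[\mathcal A]$ and $I_{\mathcal A}^\infty\subseteq\H_{\mathcal A}^\infty$.

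For the implication ``$\mathcal A$ irreducible $\Rightarrow$ $\sat(\mathcal A)$ is a prime differential ideal with characteristic set $\mathcal A$'': by the algebraic result $\asat(\mathcal A)$ is a prime ideal of $K[Y]$ with characteristic set $\mathcal A$; let $\zeta$ be a generic zero of it. Since the initials and separants are not in $\asat(\mathcal A)$, they are nonzero at $\zeta$, so one may successively solve the equations $\delta^k A_i=0$ ($k\geq 0$) for the successive derivatives of the leading variables (processing the $A_i$ in increasing order of leader, the separant being invertible at $\zeta$ each time) and take all remaining parametric higher derivatives to be algebraically independent; the invertibility of the separants is exactly what lets one extend the derivation of $K$ to the resulting field, yielding a differential field $\mathcal F=K\langle\eta\rangle$ and a point $\eta\in\mathbb A^n(\mathcal F)$ restricting to $\zeta$ on $Y$, with $A_i(\eta)=0$ and $h(\eta)\neq 0$ for all $h\in\H_{\mathcal A}^\infty$. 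Put $\mathfrak p:=\I(\eta)$, a prime differential ideal. Then $[\mathcal A]\subseteq\mathfrak p$ and $\H_{\mathcal A}^\infty\cap\mathfrak p=\emptyset$, so $\sat(\mathcal A)\subseteq\mathfrak p$. Conversely, given $f\in\mathfrak p$, Ritt-reduce: $hf\equiv r\pmod{[\mathcal A]}$ with $h\in\H_{\mathcal A}^\infty$ and $r$ reduced with respect to $\mathcal A$; then $r(\eta)=0$, and as $r$ lies in a polynomial ring $K[Y']$ gotten from $K[Y]$ by adjoining finitely many parametric variables (on which $\eta$ is transcendental), $r$ vanishes on a generic zero of the prime ideal $\asat(\mathcal A)\cdot K[Y']$, whose characteristic set is still $\mathcal A$; since $r$ is reduced this forces $r=0$, so $f\in\sat(\mathcal A)$. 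Hence $\mathfrak p=\sat(\mathcal A)$ is prime; the same reduction argument shows $\sat(\mathcal A)$ has no nonzero element reduced with respect to $\mathcal A$, so $\mathcal A$ is a characteristic set of it. In particular $\mathcal A$ is a characteristic set of a prime differential ideal, which is one half of the equivalence.

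For the converse, suppose $\mathcal A$ is a characteristic set of a prime differential ideal $\mathcal I$; then $\mathcal I=\sat(\mathcal A)$ by the fact recalled before the statement. Let $\xi$ be a generic zero of $\mathcal I$ and $\xi|_Y$ its restriction to the coordinates indexed by $Y$. Arguing as above (initials and separants are reduced and nonzero, hence nonzero at $\xi$; algebraic reduction inside $K[Y]$ of an element of $I(\xi|_Y)$ produces a remainder that is differentially reduced with respect to $\mathcal A$, lies in $\mathcal I$, and vanishes, hence is $0$), one gets $I(\xi|_Y)=\asat(\mathcal A)$ and that $\mathcal A$ is a characteristic set of it. By the algebraic result, $\mathcal A$ is irreducible.

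The main obstacle is the derivation-extension step in the middle paragraph: upgrading the purely algebraic generic zero $\zeta$ of $\asat(\mathcal A)$ to a point of an honest differential field. The point is that, after the standard semilinearity reduction, the compatibility conditions needed to extend the derivation all reduce to the single family of identities obtained by differentiating the $A_i$, and these hold by construction precisely because the separants are invertible at $\zeta$; everything else is bookkeeping with Ritt reduction and the cited algebraic characterization of irreducible ascending chains.
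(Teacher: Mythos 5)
The paper does not prove this lemma at all --- it is quoted directly from Ritt (the citation to \cite[p.107]{Ritt}) --- and your argument is essentially the classical proof found there and in Kolchin: pass to the algebraic statement via $\asat(\mathcal A)$, lift a generic zero of $\asat(\mathcal A)$ to a generic differential zero by solving $\delta^k A_i=0$ for the principal derivatives using invertibility of the separants, and identify $\sat(\mathcal A)$ with the (prime) defining ideal of that point by Ritt reduction. The argument is correct; the only step carrying real content is the derivation-extension, which you have correctly isolated and for which your sketch (recursion over principal derivatives ordered by rank, parametric derivatives taken transcendental) is the standard and adequate one.
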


\begin{rem} \label{remark-algdiffcharset}
Differential characteristic sets and algebraic characteristic sets have the following obvious relation:
 Suppose $\mathcal{A}$ is a differential characteristic set of a prime differential ideal $\mathcal{I}\subset K\{x_1,\ldots,x_n\}$
 under a fixed orderly ranking $\mathscr{R}$.
 Then for any $\ell\in\mathbb{N}$, $\{\delta^{k}f: \, f\in\mathcal{A},\ord(f)\leq \ell,k\leq \ell-\ord(f)\}$ is an algebraic characteristic set of the prime ideal 
 $\mathcal{I}\cap K\{x_1,\ldots,x_n\}_{\leq \ell}$ under the ordering induced by $\mathscr{R}$.
 \end{rem}

Let $\mathcal{I}$ be a prime differential ideal in
$K \{x_1, \ldots, x_n \}$.  The \emph{differential dimension} of $\mathcal{I}$  
is defined as the differential transcendence degree of  the
differential extension field 
$\mc Q(K\{x_1, \ldots, x_n \}/\mathcal{I})$ over $K$, denoted by $\delta$-$\dim(\mathcal{I})$.
As a differential invariant, the differential dimension  of $\mathcal{I}$ can be read off from its Kolchin polynomial,
which can characterize the size of $\mathbb V(\mathcal I)$.
 
 \begin{defn}\cite{kolchin1964notion}
Let  $\mathcal{I}$ be a prime differential  ideal of $K \{x_1, \ldots, x_n \}$.
Then there exists a unique numerical polynomial $\omega_{\mathcal{I}}(t)$
such that 
$$\omega_{\mathcal{I}}(t)=\trdeg\,\mc Q \big(K \{ x_1, \ldots,x_n \}_{\leq t} / (\mathcal{I} \cap 
K \{ x_1, \ldots,x_n \}_{\leq t}) \big)/ K$$ for all
sufficiently large $t \in \mathbb{N}$.  The polynomial 
$\omega_{\mathcal{I}}(t)$ is
called the \emph{Kolchin polynomial} of $\mathcal{I}$ or its corresponding irreducible differential variety.
\end{defn}

\begin{lem} \cite[Theorem 13]{sadik2000bound} \label{def-order-sadik} 
Let $\mathcal {I}$ be a prime differential ideal  in $K \{x_1, \ldots, x_n \}$ of dimension $d$.
Then the Kolchin polynomial of $\mathcal{I}$ has the form
$\omega_{\mathcal {I}}(t)=d(t+1)+h$, where $h$ is defined to be the
\emph{order }of $\mathcal {I}$ or of $\mathbb{V}(\mathcal {I})$, that is,
$\ord(\mathcal {I})= h$.
Let $\mathcal{A}$ be a characteristic set of $\mathcal {I}$ under
any orderly ranking. Then, $\ord(\mathcal {I})= \sum_{f\in\mathcal{A}}\ord(f)$
and $d=n-\card(\mathcal A)$.
\end{lem}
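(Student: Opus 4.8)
The statement has two parts: first, that the Kolchin polynomial of a prime differential ideal $\mathcal{I}$ of differential dimension $d$ has the shape $\omega_{\mathcal{I}}(t) = d(t+1) + h$ for a constant $h$ (the order); second, the characteristic-set formulas $\ord(\mathcal{I}) = \sum_{f \in \mathcal{A}} \ord(f)$ and $d = n - \card(\mathcal{A})$. My plan is to deduce everything from the structure of a differential characteristic set $\mathcal{A}$ under an orderly ranking, together with Remark \ref{remark-algdiffcharset} and Lemma \ref{th-sat}.

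First I would fix an orderly ranking $\mathscr{R}$ and a differential characteristic set $\mathcal{A} = \langle A_1, \dots, A_c \rangle$ of $\mathcal{I}$, with $c = \card(\mathcal{A})$. By Lemma \ref{th-sat} we have $\mathcal{I} = \sat(\mathcal{A})$. Let $u_1 \prec \dots \prec u_c$ be the leaders of $A_1, \dots, A_c$, and let $y_1, \dots, y_{n-c}$ enumerate the variables among $x_1, \dots, x_n$ that are \emph{not} the underlying differential indeterminate of any leader; a standard argument (the generic point of $\mathcal{I}$ has the $\delta^j y_i$ differentially independent) shows $\delta$-$\dim(\mathcal{I}) = n - c$, giving $d = n - \card(\mathcal{A})$. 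Next, for $t$ large I would count $\trdeg$ of $\mathcal{Q}(K\{x\}_{\leq t}/(\mathcal{I} \cap K\{x\}_{\leq t}))/K$ directly. By Remark \ref{remark-algdiffcharset}, the prolonged set $\mathcal{A}^{(t)} := \{\delta^k A_i : \ord(A_i) \leq t,\ k \leq t - \ord(A_i)\}$ is an algebraic characteristic set of $\mathcal{I} \cap K\{x\}_{\leq t}$. The variables $x_i^{(j)}$ with $j \leq t$ number $n(t+1)$; the algebraic characteristic set has one element for each leader $\delta^k u_i$, and an algebraic characteristic set of a prime ideal leaves exactly (number of variables) $-$ (size of the chain) many algebraically independent coordinates. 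So I need to count, for each $i$, how many derivatives $\delta^k u_i$ with $0 \le k$ have order $\le t$: if $h_i := \ord(A_i) = \ord(u_i)$ (the order of the differential indeterminate appearing in the leader — here orderliness of $\mathscr{R}$ is exactly what guarantees $\ld(\delta^k A_i) = \delta^k u_i$ and that its order is $h_i + k$), this count is $t - h_i + 1$. Summing, the size of $\mathcal{A}^{(t)}$ is $\sum_{i=1}^c (t - h_i + 1) = c(t+1) - \sum h_i$, hence
$$\omega_{\mathcal{I}}(t) = n(t+1) - c(t+1) + \sum_{i=1}^c h_i = (n-c)(t+1) + \sum_{i=1}^c h_i = d(t+1) + h,$$
with $h = \sum_{f \in \mathcal{A}} \ord(f)$. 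Reading off the coefficients gives the linear form claimed and identifies the constant term as $\ord(\mathcal{I})$, simultaneously proving the characteristic-set formula for the order.

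The main obstacle — and the step deserving the most care — is the bookkeeping connecting the differential characteristic set to the \emph{algebraic} one at truncation level $t$: one must verify (i) that for $t$ sufficiently large $\mathcal{A}^{(t)}$ really is an algebraic characteristic set of the truncated ideal and that its leaders are exactly the $\delta^k u_i$ (this uses orderliness of $\mathscr{R}$, so that differentiation raises the leader's order by one and no cancellation of leaders occurs, and uses that $\mathcal{I} = \sat(\mathcal{A})$ is prime so Remark \ref{remark-algdiffcharset} applies verbatim), and (ii) the elementary but essential fact that for a prime ideal in a polynomial ring, the transcendence degree of the fraction field of the quotient equals the number of variables minus the cardinality of any characteristic set. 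Granting these, the count above is routine. One should also remark that the formula is independent of the chosen orderly ranking: different orderly rankings give characteristic sets with the same $\card$ (forced by the dimension computation) and the same $\sum \ord(f)$ (forced by the constant term of the Kolchin polynomial, which is a differential invariant), so the definition of $\ord(\mathcal{I})$ is well posed.
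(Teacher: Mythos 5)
Your proof is correct. Note that the paper itself offers no proof of this lemma --- it is quoted verbatim from Sadik (Theorem 13 of \cite{sadik2000bound}) --- so there is no in-paper argument to compare against; your argument is the standard one underlying that reference. The two reductions you isolate are exactly the right load-bearing steps: (i) that the prolonged set $\mathcal{A}^{(t)}$ is an algebraic characteristic set of the truncation with leaders $\delta^k u_i$ (this is precisely the content of Remark \ref{remark-algdiffcharset}, and orderliness guarantees both $\ord(A_i)=\ord(\ld(A_i))$ and $\ld(\delta^k A_i)=\delta^k\ld(A_i)$, since $\delta A_i = \Sep_{A_i}\cdot\delta(\ld(A_i)) + (\text{lower rank})$); and (ii) that a prime polynomial ideal has dimension equal to the number of variables minus the size of its characteristic set, i.e.\ the parametric variables form a transcendence basis of the quotient field. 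One small point worth making explicit: the count of $n-c$ parametric differential indeterminates uses that the leaders of an auto-reduced set involve $c$ \emph{distinct} indeterminates, which follows because two elements whose leaders are derivatives of the same $x_k$ cannot be mutually reduced. With that observation the bookkeeping $\omega_{\mathcal{I}}(t) = n(t+1) - \sum_{i=1}^{c}(t-h_i+1) = (n-c)(t+1)+\sum_i h_i$ for $t \geq \max_i h_i$ is complete, and your closing remark on independence of the orderly ranking correctly disposes of the well-posedness of $\ord(\mathcal{I})$.
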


 Recall that the set of numerical  polynomials can be totally ordered with respect to the ordering:
$\omega_1\leq \omega_2$ if and only if $\omega_1(s)\leq \omega_2(s)$  for all sufficiently large $s\in\m N$. 
Given a differential variety $W$,  we define a {\em generic component } of $V$ to be an irreducible component which has 
maximal Kolchin polynomial among all the components of $W$.
Lemma \ref{def-order-sadik} defines order for prime differential ideals or its corresponding irreducible differential varieties.
In this paper, we sometimes talk about the order of an arbitrary differential variety where we actually mean the order of its
generic components.

\subsection{Prolongation sequences and prolongation admissible varieties} \label{sec-prolongation}
We follow the notation of section 2 of~\cite{MPSarcs2008}. There, the authors define
a sequence of functors  $\tau_m$ indexed by the natural numbers
from varieties over $K$ to varieties over $K$ (to be honest, the functor may return 
a nonreduced scheme, but the distinction between a scheme and its reduced subscheme is 
immaterial here).   For affine space itself, 
one has $\tau_m (\m A^n) \cong \m A^{n(m+1)}$ where if we present $\m A^n$ as 
$\operatorname{Spec}(K[x_1,\ldots,x_n])$, then 
$\tau_m(\m A^n) = \operatorname{Spec}(K\{ x_1, \ldots, x_n \}_{\leq m})$.  
If $V  \subseteq   \m A^m$ is a subvariety of affine space, then 
$\tau_m V = \operatorname{Spec} \big(K \{ x_1, \ldots, x_n \}_{\leq m}\big /
( \{ \delta^j f ~:~ f \in I(V), j \leq m \} ) \big)$.  Note that the ideal 
$( \{ \delta^j f ~:~ f \in I(V), j \leq m \}  )$ is contained in 
$\I(V) \cap K \{ x_1, \ldots, x_n \}_{\leq m }$, but the inclusion 
may be proper.

 There is a natural differential algebraic map $\nabla_m:V \to \tau_m V$ given on points valued
in a differential ring by 
$$(a_1, \ldots, a_n) \mapsto (a_1,\ldots,a_n; \delta(a_1), \ldots, \delta(a_n); 
\ldots; \delta^m(a_1), \ldots, \delta^m(a_n)) \text{ .}$$

We call points in the image of $\nabla_m$ \emph{differential points}.

The image of the map $\nabla_m$ need not be Zariski dense, even on $\mc U$-valued points.   
For any differential subvariety $W  \subseteq   \mathbb A^n$, we define 
$B_m(W)$ to be the Zariski closure in $\tau_m \mathbb A^n$ of $\nabla_m (W)$.       

The functors $\tau_m$ form a projective system with the 
natural transformation $\pi_{m,\ell}:\tau_m \to \tau_\ell$ for $\ell \leq m$ given 
by projecting onto the coordinates corresponding to the first $\ell$ derivatives. 
We write $\pi_{m,\ell}:\tau_m V \to \tau_\ell V$ rather than $\pi_{m,\ell}^V$. 
Moreover, $\tau_0$ is simply the identity functor so that we write $V$ rather than
$\tau_0 (V)$.    From the definition, for $W  \subseteq   \mathbb A^n$ a differential 
subvariety, it is clear that $\pi_{m,\ell}$ restricts to make the sequence of 
varieties $(B_m(W))_{m=0}^\infty$ into a projective system of algebraic varieties in which each 
map in the system is dominant. 
 
We write $\tau^m$ for the result of composing the functor $\tau_1$ with itself
$m$ times.  There is a natural transformation $\rho_m:\tau_m \to \tau^m$ which 
for any algebraic variety $V$ gives a closed embedding $\rho_m:\tau_m V
\hookrightarrow \tau^m V$.   To ease notation, let us write the map $\rho$ in 
coordinates only for the case of $V = {\mathbb A}^1$ and $m = 3$.
The general case requires one to decorate the variables with further subscripts and to 
nest the coordinates more deeply.   Here 
$$\rho_3(x^{(0)},x^{(1)},x^{(2)},x^{(3)}) = (( ( x^{(0)}, x^{(1)}), (x^{(1)}, x^{(2)})), 
((x^{(1)},x^{(2)}), (x^{(2)}, x^{(3)})))$$

For the formal definitions of the items discussed above,  please refer to~\cite{MSJETS, MPSarcs2008}.

\begin{defn}  \label{def-pro-seq} A sequence of varieties $X_\ell  \subseteq   \tau_\ell \m A^{n}\,(\ell\in\m N)$ is called a \emph{prolongation sequence} if \begin{enumerate} 
\item The map $X_{\ell+1} \rightarrow X_\ell$ induced by the projection map $\pi _{\ell +1, \ell } : \tau _{\ell +1 } \m A^n \rightarrow \tau _ \ell \m A^n $ is dominant. 
\item For all $\ell$, $\rho_{\ell+1} (X_{\ell+1})$ is a closed 
subvariety of $\tau_1 (\rho_\ell (X_\ell))$. 
\end{enumerate} 
\end{defn}

Given a sequence of algebraic varieties $(X_\ell)_{\ell \geq 0}$ with $X_\ell \subseteq   \tau_\ell \m A^{n}$, the  
differential variety $V$ corresponding to $(X_\ell)_{\ell\geq0}$ is  
 $$V=\{b \in {\mathbb A}^n: (\forall \ell)~~\nabla_\ell (b) \in X_\ell\} \text{ .}$$
From the point of view of differential ideals, if 
$\mathcal{I} = \bigcup_{\ell = 0}^\infty I(X_\ell)  \subseteq   K \{ x_1, \ldots, x_n \}$, then 
$V = \V(\mathcal{I})$. 

By Definition \ref{def-pro-seq}, a sequence of varieties $(X_\ell\subset\tau_\ell\mathbb{A}^n)_{\ell \geq 0}$ is a prolongation sequence if and only if for each $\ell \geq 0$,
$X_\ell=B_\ell(V)$ where $V$ is the differential variety corresponding to $(X_\ell)_{\ell \geq 0}$.

There is a bijective correspondence between irreducible prolongation sequences (by which we mean each variety in the sequence is irreducible) and affine irreducible differential varieties. Given a differential variety $V$, the prolongation sequence corresponding to $V$ is given by $X_\ell=B_\ell(V)$ for all $\ell \geq 0$ ~\cite[the discussion preceding Definition 2.8]{MPSarcs2008}.
Thus, prolongation sequences are in one-to-one correspondence with affine differential algebraic varieties, and the Noetherianity of the Kolchin topology guarantees that a finite portion of a prolongation sequence determines the entire sequence.

The varieties which appear in a prolongation sequence are interesting to us.
Below, we first study the irreducible ones and define prolongation admissibility for irreducible varieties. 

\begin{defn} Given an irreducible algebraic variety $V  \subseteq    \tau_\ell \m A^ {n}$, 
we say that $V$ is prolongation admissible 
if $\rho_\ell ( V)  \subseteq   \tau  ( \rho_{\ell - 1} (\pi_{\ell,\ell-1} (V)))$.
\end{defn} 

For  irreducible prolongation admissible varieties, it is easy to establish the stronger fact that $\rho_\ell ( V)  \subseteq   \tau ^ {\ell-d} ( \rho_d (\pi_{\ell,d} (V)))$ 
for all $0 \leq d < \ell$.   Indeed, we have natural transformations $\rho_\ell \to \rho_{\ell-d} \pi_{\ell,\ell-d}$ for $0  < d   \leq \ell$ given by forgetting the coordinates
corresponding to the $k^\text{th}$ derivatives for $\ell -d < k \leq \ell$.  Applying $\tau$ and precomposing with $\pi_{\ell,\ell-1}$ to $\rho_{\ell-1} \to \rho_{\ell-d} \pi_{\ell-1,\ell-d}$
we obtain natural transformations $\tau \rho_{\ell-1} \pi_{\ell, \ell - 1} \to \tau \rho_{\ell - d} \pi_{\ell, \ell - d}$.   Thus, from an inclusion
$\rho_\ell(V)   \subseteq    \tau \rho_{\ell-1} \pi_{\ell, \ell-1}(V)$ we deduce that we must have an inclusion $\rho_{\ell-d + 1} \pi_{\ell,\ell-d+1}(V)  \subseteq   
\tau \rho_{\ell - d} \pi_{\ell, \ell -d} (V)$ as we see from the following diagram:

$$\xymatrixcolsep{7pc}\xymatrix{ 
\rho_{\ell} (V)  \ar[d]  \ar@{^{(}->}[r]  &  \tau \rho_{\ell-1} (\pi_{\ell,\ell-1} (V))   \ar[d] \\
\rho_{\ell-d+1} (\pi_{\ell,\ell-d+1} (V)) \ar@{^{(}.>}[r]  &  \tau \rho_{\ell-d} (\pi_{\ell,\ell-d} (V))
 \text{.}  \\
}$$ 

Using the fact that $\tau$ preserves inclusion, and iterating, we have 

$$\rho_\ell  (V)  \subseteq   \tau \rho_{\ell -1} (\pi_{\ell, \ell-1} (V))   \subseteq   \tau^2 \rho_{\ell -2} (\pi_{\ell, \ell -2} (V)) 
 \subseteq   \cdots  \subseteq   \tau^d \rho_{\ell- d} (\pi_{\ell, \ell-d} (V))$$
 as claimed.

 The following fact is the basis of the well known geometric axioms for differentially closed fields, written in our language: 

\begin{fact} \label{geoax}    
Given irreducible varieties $V$ and $W$ of $\mathbb {A}^n(\mc U)$ with $W  \subseteq   \tau_1 (V)$ 
so that the restriction of $\pi_{1,0}$ to $W$ is a dominant map to $V$, then 
for any $U  \subseteq   W(\mc U)$, a nonempty Zariski open set, there is $a \in V(\mc U)$
such that $\nabla_1(a) \in U$. 
\end{fact}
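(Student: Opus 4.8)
This statement is a coordinate version of the (well known) geometric axiomatisation of differentially closed fields of characteristic zero due to Pierce and Pillay, so strictly speaking it follows at once from the fact that $\mc U$ is a model of $\mathrm{DCF}_0$; let me nonetheless indicate a direct argument, since that is more in keeping with the self-contained spirit of this section. The plan is to manufacture an abstract differential field extension of $K$ in which the desired point already lives, and then to realise that extension inside the saturated field $\mc U$.

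Enlarge $K$ to a small differential subfield $K_1$ of $\mc U$ over which $U$ is defined. As subsets of $\m A^n(\mc U)$ the varieties $V$ and $W$ are irreducible and $\mc U$ is algebraically closed, so $V$ and $W$ are geometrically irreducible, hence remain irreducible over $K_1$; and since $\pi_{1,0}$ restricts to a dominant map $W\to V$ we have $V=\overline{\pi_{1,0}(W)}$. Let $(a,b)\in\tau_1\m A^n=\m A^n\times\m A^n$ be a generic point of $W$ over $K_1$. Because $U$ is a nonempty Zariski-open subset of the $K_1$-irreducible variety $W$ and is defined over $K_1$, we have $(a,b)\in U$; and because $\pi_{1,0}\colon W\to V$ is dominant, $a:=\pi_{1,0}(a,b)$ is a generic point of $V$ over $K_1$, so the prime ideal of $a$ over $K_1$ is exactly the ideal defining $V$ over $K_1$.

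The crucial step is to extract from the containment $W\subseteq\tau_1 V$ a derivation that realises $b$ as $\delta(a)$. From the explicit description of $\tau_1$ recalled above, $\tau_1$ commutes with extension of the differential base field, so $W\subseteq\tau_1 V$ (over $K$) gives $(a,b)\in\tau_1 V$ also when $\tau_1 V$ is computed over $K_1$. Since $a$ is generic in $V$ over $K_1$, unwinding the definition of $\tau_1 V$ shows that this membership says precisely that the assignment $x_i\mapsto b_i$ is compatible with $\delta|_{K_1}$ and with every polynomial relation satisfied by $a$ over $K_1$; hence $\delta|_{K_1}$ extends to a derivation $D$ of $K_1(a)$, with values in $F:=K_1(a,b)$ and $D(a_i)=b_i$, and — as we work in characteristic zero — $D$ extends further to a derivation of $F$. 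Now $(F,D)$ is a small differential field extension of $(K,\delta)$; it embeds into some model of $\mathrm{DCF}_0$ (since $\mathrm{DCF}_0$ is the model companion of the theory of differential fields of characteristic zero), and therefore, $\mc U$ being saturated and hence universal over small parameter sets, the isomorphism type of $(F,D)$ over $K_1$ is realised in $\mc U$. If $(a',b')\in\mc U^{2n}$ realises it, then $a'\in V(\mc U)$, $(a',b')\in U(\mc U)$ and $\delta(a')=b'$, so $\nabla_1(a')=(a',\delta(a'))=(a',b')\in U$, as required.

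The step I expect to be the main obstacle is the third one: converting the geometric containment $W\subseteq\tau_1 V$ into the algebraic statement that the prescribed tuple $b$ of ``derivatives'' is consistent with the entire ideal of relations of $a$ over $K_1$. This is exactly where the prolongation description of $\tau_1$, and its compatibility with base change, enter. (If one does not wish to use that an irreducible subvariety of $\m A^n(\mc U)$ is geometrically irreducible, but only that $V$ and $W$ are irreducible over $K$, one must first replace $W$ by a geometric component dominating a component of $V$, check that this component still meets $U$ in a dense open set, and descend the $\tau_1$-membership to it by a multiplier argument; these are routine modifications.) A minor additional point is the appeal to saturation: one must note that $F=K_1(a,b)$ is small and embeds into a model of $\mathrm{DCF}_0$, so that its type over $K_1$ is realised inside the saturated $\mc U$.
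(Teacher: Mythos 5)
Your argument is correct: it is the standard Pierce--Pillay argument (take a generic point $(a,b)$ of $W$ over a small field of definition, use $W\subseteq\tau_1 V$ together with genericity of $a$ in $V$ to see that $a_i\mapsto b_i$ extends $\delta$ to a derivation of $K_1(a,b)$, then realise the resulting differential field inside the saturated model $\mc U$). The paper itself gives no proof of this Fact --- it is stated with a citation to Pierce and Pillay --- and your reconstruction is exactly the intended argument, with the two delicate points (descending the $\tau_1$-membership to the full ideal of relations of $a$ over $K_1$, and the appeal to saturation plus model-companionship) correctly identified and handled.
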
 

Indeed, Fact~\ref{geoax} characterizes differentially closed fields amongst algebraically closed
differential fields of characteristic zero; for details, 
see~\cite{PiercePillay}. 

\begin{lem} 
\label{highorder} 
Suppose that $V  \subseteq   \tau_\ell(\m A^n)$ is an irreducible prolongation admissible variety. 
Then for any nonempty open subset $U  \subseteq   V$, there is some $a \in \m A^n (\mc U)$ 
such that $\nabla_\ell (a) \in U$. 
\end{lem}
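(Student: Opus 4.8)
The plan is to induct on $\ell$, peeling off one derivative at a time and repeatedly invoking Fact~\ref{geoax}. The base case $\ell = 1$ is exactly Fact~\ref{geoax}: here $V \subseteq \tau_1(\m A^n)$, prolongation admissibility says $\rho_1(V) \subseteq \tau(\rho_0(\pi_{1,0}(V)))$, which (since $\rho_0$ and $\rho_1$ are essentially identities in low order and $\tau = \tau_1$) just says $V \subseteq \tau_1(\pi_{1,0}(V))$; and dominance of $\pi_{1,0}$ on $V$ over its image $\pi_{1,0}(V)$ holds automatically since $V$ is irreducible and $\pi_{1,0}(V)$ is by definition the closure of the image. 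So Fact~\ref{geoax} applied with the irreducible varieties $\pi_{1,0}(V) \subseteq \m A^n$ and $V \subseteq \tau_1(\pi_{1,0}(V))$ produces, for any nonempty open $U \subseteq V$, a point $a$ with $\nabla_1(a) \in U$.

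For the inductive step, suppose the result holds for $\ell - 1$. Let $V \subseteq \tau_\ell(\m A^n)$ be irreducible and prolongation admissible, and let $U \subseteq V$ be nonempty open. Set $V' := \pi_{\ell,\ell-1}(V) \subseteq \tau_{\ell-1}(\m A^n)$, which is irreducible, and observe first that $V'$ is itself prolongation admissible: by the iterated inclusion established in the paragraph before Fact~\ref{geoax} (with $d=1$ applied to $V$), one gets $\rho_\ell(V) \subseteq \tau^2\rho_{\ell-2}(\pi_{\ell,\ell-2}(V))$, and combined with $\rho_{\ell-1}(V') \subseteq \rho_{\ell-1}(\pi_{\ell,\ell-1}(V))$ and the compatibility of the $\rho$'s and $\pi$'s one extracts $\rho_{\ell-1}(V') \subseteq \tau(\rho_{\ell-2}(\pi_{\ell-1,\ell-2}(V')))$ — this is the diagram-chase at the heart of the argument and is the step I'd expect to need the most care. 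Now the key geometric input: by prolongation admissibility of $V$ we have $\rho_\ell(V) \subseteq \tau(\rho_{\ell-1}(V'))$, and the projection $\tau(\rho_{\ell-1}(V')) \to \rho_{\ell-1}(V')$ restricted to $\rho_\ell(V)$ is dominant onto $\rho_{\ell-1}(V')$ (this is condition (1)-type dominance, following from irreducibility of $V$ and $V' = \pi_{\ell,\ell-1}(V)$ being the closure of the image). Apply Fact~\ref{geoax} in the ambient space $\tau_1(\rho_{\ell-1}(V'))$ with the pair of irreducible varieties $\rho_{\ell-1}(V')$ and $\rho_\ell(V)$ (identified with subvarieties via the $\rho$-embeddings): for any nonempty open subset of $\rho_\ell(V)$, and in particular for $\rho_\ell(U)$, there is a point $w \in \rho_{\ell-1}(V')$ with $\nabla_1(w) \in \rho_\ell(U)$.

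It remains to lift $w$ back to $\m A^n$. Since $w \in \rho_{\ell-1}(V') = \rho_{\ell-1}(\pi_{\ell,\ell-1}(V))$, the inductive hypothesis does not apply directly to $w$; instead one observes that the set $\{ v \in V' : \nabla_1(\rho_{\ell-1}(v)) \in \rho_\ell(U) \}$ is a nonempty open subvariety of $V'$ (nonempty because the chosen $w$ witnesses it, open because $\nabla_1 \circ \rho_{\ell-1}$ is a morphism and $\rho_\ell(U)$ is open in $\rho_\ell(V)$). Applying the inductive hypothesis to $V'$ (prolongation admissible, as shown) and this open set, we obtain $a \in \m A^n(\mc U)$ with $\nabla_{\ell-1}(a) \in V'$ and $\nabla_1(\rho_{\ell-1}(\nabla_{\ell-1}(a))) \in \rho_\ell(U)$. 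Using the compatibility $\rho_\ell \circ \nabla_\ell = \nabla_1 \circ \rho_{\ell-1} \circ \nabla_{\ell-1}$ (which expresses that taking all prolongations up to order $\ell$ of $a$ is the same as taking the first prolongation of the order-$(\ell-1)$ prolongation, suitably arranged), this says $\rho_\ell(\nabla_\ell(a)) \in \rho_\ell(U)$, hence $\nabla_\ell(a) \in U$ since $\rho_\ell$ is a closed embedding. This completes the induction.

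The main obstacle is the bookkeeping in the inductive step: carefully identifying which ambient space to run Fact~\ref{geoax} in, verifying that $V' = \pi_{\ell,\ell-1}(V)$ inherits prolongation admissibility, and tracking the $\rho$–$\nabla$–$\pi$ compatibilities so that the lifted point's order-$\ell$ prolongation genuinely lands in $U$ rather than merely in $V$. None of these are deep — they all follow from the functoriality already spelled out in the excerpt — but getting the indices exactly right is where an error would creep in.
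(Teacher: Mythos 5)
Your argument contains a genuine gap in the lifting step, and the induction it is meant to drive is in fact unnecessary. The problem is the claim that $S := \{ v \in V' : \nabla_1(\rho_{\ell-1}(v)) \in \rho_\ell(U) \}$ is a nonempty \emph{open} subvariety of $V'$ ``because $\nabla_1 \circ \rho_{\ell-1}$ is a morphism.'' The map $\nabla_1$ is not a morphism of algebraic varieties: it applies the derivation $\delta$ to the coordinates, so membership of $\nabla_1(\rho_{\ell-1}(v))$ in a Zariski open set is a condition on $v$ \emph{and} on $\delta v$. Hence $S$ is only Kolchin open (differentially constructible), not Zariski open, and your inductive hypothesis --- which produces differential points only in nonempty \emph{Zariski} open subsets of the prolongation admissible variety $V'$ --- cannot be applied to it. A one-variable example: for $V' = \m A^1$ and the open subset $\{x' \neq 0\}$ of $\tau_1 \m A^1$, the corresponding set $S$ is the complement of the field of constants, which is dense in the Kolchin topology but contains no nonempty Zariski open set.

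The repair is precisely the observation that the paper's (non-inductive) proof turns on. Once Fact~\ref{geoax} produces $w \in \rho_{\ell-1}(\pi_{\ell,\ell-1}(V))$ with $\nabla_1(w) \in \rho_\ell(U)$, you do not need to upgrade $w$ to a differential point of $V'$ by a further argument: the compatibility conditions built into the closed embeddings $\rho$ already force it. Writing $w = \rho_{\ell-1}(c_0, c_1, \ldots, c_{\ell-1})$, the requirement that $\nabla_1(w) = (w, \delta w)$ lie in $\rho_\ell(\tau_\ell \m A^n)$ forces $c_1 = \delta c_0$, $c_2 = \delta c_1 = \delta^2 c_0$, and so on, so that $\nabla_1(w) = \rho_\ell(\nabla_\ell(c_0))$; taking $a = c_0$ gives $\nabla_\ell(a) \in U$ directly. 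Thus the single application of Fact~\ref{geoax} at level $\ell$ already yields a Zariski dense set of genuine $\nabla_\ell$-points in $\rho_\ell(V)$, and in particular one in $\rho_\ell(U)$; no induction on $\ell$, and hence no verification that $\pi_{\ell,\ell-1}(V)$ is prolongation admissible, is needed. Your base case and your application of Fact~\ref{geoax} at the top level are both fine; it is only the descent from $w$ to a point of $\m A^n$ that goes wrong, and it goes wrong exactly where you predicted the bookkeeping would be delicate.
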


\begin{proof}    Since $V$ is prolongation admissible,   $\rho_\ell(V)  \subseteq   \tau_1(\rho_{\ell-1} (\pi_{\ell,\ell-1}(V)))$. 
So, we have the following commutative diagram: 

$$\xymatrixcolsep{7pc}\xymatrix{ 
V  \ar[d] _{ \pi_{\ell,\ell-1}} \ar@{^{(}->}[r] ^{\rho_\ell} & \rho_\ell (V)  \subseteq   \tau_1(\rho_{\ell-1} (\pi_{\ell,\ell-1} (V))) \ar[d] ^{ \pi_{1,0}}  \\
\pi_{\ell,\ell-1} (V) \ar@{^{(}->}[r]_{\rho_{\ell-1}} & \rho_{\ell-1} (\pi_{\ell,\ell-1} (V))
 \text{.}  \\
}$$ 

Since each $\rho$ is an embedding,  we must have that the restriction of $\pi_{1,0}$ maps $\rho_\ell (V)$ dominantly to 
$\rho_{\ell-1} (\pi_{\ell,\ell-1}(V))$. 
Thus, by Fact~\ref{geoax}, the set of points 
$$\{ \nabla_1(a)| a \in \rho_{\ell-1} (\pi_{\ell,\ell-1}(V)), 
\, \, \nabla_1 (a) \in \rho_\ell (V)\}$$ is Zariski dense in $\rho_\ell(V)$,
 so the set  
 $$\rho_\ell^{-1} (\{ \nabla_1(a)| a \in \rho_{\ell-1} (\pi_{\ell,\ell-1}(V)), 
\, \, \nabla_1 (a) \in \rho_\ell (V)\})$$ is Zariski dense in $V$. 
Every such point has the form $\nabla_\ell (b)$ for $b \in \m A^n$, proving the claim. 
\end{proof} 

\begin{rem} \label{rk-pro-admissible}
From the proof of Lemma \ref{highorder},  an irreducible algebraic  
variety is prolongation admissible if and only if the differential points form a dense subset.
For an arbitrary algebraic variety, we define it to be {\em prolongation admissible} if all of its irreducible components are prolongation admissible. 
In other words, an algebraic variety is prolongation admissible if and only if the differential points form a dense subset.
So every variety in a prolongation sequence is prolongation admissible.
Furthermore, prolongation admissible varieties are precisely the varieties which appear as elements of a prolongation sequence. 
\end{rem}

 Given two prolongation sequences $X:=(X_\ell\subset\tau_\ell\mathbb{A}^n)_{\ell=0}^{\infty}$ and $Y:=(Y_\ell\subset\tau_\ell\mathbb{A}^n)_{\ell=0}^{\infty}$, define $X\leq Y$ if and only if $X_\ell \subseteq   Y_\ell$ for each $\ell\geq0$. With this binary relation, the set of all prolongation sequences forms a partially order set.
 From the definition, it is easy to see that if  $\mc V$ is a set of prolongation sequences
$X:= (X_\ell  \subseteq   \tau_\ell {\mathbb A}^n)_{\ell=0}^\infty$,  
then the sequence $( \overline{ \bigcup_{X \in \mc V}  X_\ell} )_{\ell=0}^\infty$
is also a prolongation sequence. This justifies the following definition. 

\begin{defn} Given a variety $V  \subseteq   \tau_h (\m A^n)$,
the prolongation sequence generated by $V$ is the maximal prolongation sequence  
$(V_\ell  \subseteq   \tau_\ell \m A^{n})_{\ell\geq0}$
such that  $V_h  \subseteq   V$.  
\end{defn}


The following lemma follows from the observation above that 
the closure of an arbitrary union of prolongation admissible varieties is also 
prolongation admissible.

\begin{lem}\label{fincomp} Given $V  \subseteq   \tau_h \m A^{n}$,
there is a finite set of irreducible maximal prolongation admissible subvarieties of $V$. 
\end{lem}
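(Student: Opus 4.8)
The plan is to apply, to the family of \emph{all} irreducible prolongation admissible subvarieties of $V$, the observation recorded just above the statement: the Zariski closure of an arbitrary union of prolongation admissible varieties is again prolongation admissible. Granting this, Noetherianity of $\tau_h \m A^n$ forces the resulting closure to have only finitely many irreducible components, and a short argument identifies these components with the maximal irreducible prolongation admissible subvarieties of $V$.

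Concretely, let $\mc W$ be the collection of all irreducible prolongation admissible subvarieties $U  \subseteq   V$; this is a set, since the subvarieties of $\tau_h \m A^n \cong \m A^{n(h+1)}$ correspond to radical ideals of a fixed polynomial ring. Put $W := \overline{\bigcup_{U \in \mc W} U}$, the closure taken in $\tau_h \m A^n$. By the observation above, $W$ is prolongation admissible (alternatively: by Remark \ref{rk-pro-admissible} each $U \in \mc W$ is the $h$-th term of a prolongation sequence, and the coordinatewise closure of the union of these sequences is again a prolongation sequence, whose $h$-th term is $W$). Since $V$ is closed and contains every $U \in \mc W$, we get $W  \subseteq   V$. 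As a closed subvariety of the Noetherian scheme $\tau_h \m A^n$, $W$ admits an irredundant decomposition $W = W_1 \cup \cdots \cup W_k$ into irreducible components, and since the irreducible components of a prolongation admissible variety are by definition prolongation admissible, each $W_i$ is an irreducible prolongation admissible subvariety of $V$.

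Finally, $\{W_1, \ldots, W_k\}$ is exactly the set of maximal elements of $\mc W$. Each $W_i$ belongs to $\mc W$; conversely any $U \in \mc W$ satisfies $U  \subseteq   W$ by construction, and being irreducible it lies inside some $W_j$. Consequently no $W_i$ is properly contained in a member of $\mc W$ --- such a member would lie in some $W_j$, forcing $W_i \subsetneq W_j$, impossible for distinct members of an irredundant decomposition and impossible for $i = j$ as well --- so each $W_i$ is maximal, and every maximal member of $\mc W$ coincides with the $W_j$ containing it. Thus $V$ has a finite set of irreducible maximal prolongation admissible subvarieties, as claimed.

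The only delicate point is the legitimacy of forming the ``arbitrary union'': one needs both that these subvarieties form a genuine set and that the closure of the union of the associated prolongation sequences remains a prolongation sequence, and both are supplied by the discussion preceding the statement (the latter resting on Remark \ref{rk-pro-admissible}). Everything else is routine manipulation of irreducible components, so I do not expect a substantial further obstacle.
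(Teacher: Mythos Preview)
Your argument is correct and follows essentially the same route as the paper's own proof: form the closure $W$ of the union of all (irreducible) prolongation admissible subvarieties of $V$, observe that $W$ is prolongation admissible, decompose $W$ into irreducible components, and identify these components as the maximal irreducible prolongation admissible subvarieties. The only cosmetic differences are that the paper takes $\mc W$ to consist of \emph{all} prolongation admissible subvarieties (not just the irreducible ones --- this does not change $W$), and that the paper verifies directly that each component $W_i$ has dense differential points rather than appealing, as you do, to the definition in Remark~\ref{rk-pro-admissible}; your final paragraph spelling out maximality is more detailed than the paper's one-line assertion.
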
 
\proof 
Let $\mathcal{W}$ be the set of all prolongation admissible subvarieties of $V$ and set $W=\overline{\cup\mathcal{W}}$.
  Then clearly,  $\overline{\{\nabla(a):\nabla(a)\in W\}}=W$.    
  Let $W=\cup_{i=1}^mW_i$ be the irredundant irreducible decomposition of $W$.
  Since $\overline{\cup_{i=1}^m\{\nabla(a):\nabla(a)\in W_i\}}=W$, $\overline{\{\nabla(a):\nabla(a)\in W_i\}}=W_i$ for each $i$.
  Thus, $W_i$ is prolongation admissible and these $W_i$ are the only maximal irreducible prolongation admissible subvarieties of $V$.
\qed

\begin{lem} \label{proladmiss} 
Given a prolongation admissible variety $V \subseteq   \tau_h (\m A^n) = \m A^{n(h+1)}$, the prolongation sequence generated by $V$, denoted by $(V_i)_{ i\in \m N}$, has the property that $V_h = V$ and for each $i$, $$V_i = \overline{\{ ( a , \delta  a , \ldots , \delta ^i  a ) \, | \,  a \in \mc U, \, ( a , \delta  a , \ldots \delta ^h  a ) \in V \}}.$$\end{lem}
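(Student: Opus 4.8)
The plan is to exhibit an explicit prolongation sequence matching the asserted formula and then identify $(V_i)_{i \in \mathbb{N}}$ with it by means of the maximality built into the definition of the prolongation sequence generated by $V$. First I would introduce the differential variety $W \subseteq \mathbb{A}^n$ with $W(\mathcal{U}) = \{b \in \mathbb{A}^n(\mathcal{U}) : \nabla_h(b) \in V\}$; this set is Kolchin closed, being the common zero set of $I(V)$ read as a set of differential polynomials in $K\{x_1,\dots,x_n\}$ (a differential polynomial in $K\{x_1,\dots,x_n\}_{\leq h}$ evaluates at $b$ to the value of the corresponding ordinary polynomial in the $x_i^{(j)}$ at $\nabla_h(b)$). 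Set $Z_i := B_i(W)$ for $i \in \mathbb{N}$. Then $(Z_i)_{i \geq 0}$ is a prolongation sequence, namely the one corresponding to the differential variety $W$ under the correspondence recorded after Definition~\ref{def-pro-seq}, and since $B_i(W)$ is the Zariski closure of $\nabla_i(W(\mathcal{U}))$,
$$Z_i = \overline{\{(a, \delta a, \dots, \delta^i a) : a \in \mathbb{A}^n(\mathcal{U}),\ (a, \delta a, \dots, \delta^h a) \in V\}},$$
which is precisely the right-hand side of the asserted formula.

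I would then compute the $h$-th term: $Z_h = \overline{\{\nabla_h(a) : \nabla_h(a) \in V\}}$ is the Zariski closure in $\tau_h \mathbb{A}^n$ of the set of differential points lying in $V$. Because $V$ is prolongation admissible, Remark~\ref{rk-pro-admissible} says its differential points are Zariski dense in $V$, so $Z_h = V$. In particular $Z_h \subseteq V$, so $(Z_i)$ is one of the prolongation sequences whose $h$-th term is contained in $V$; since $(V_i)$ is, by definition, the maximal such sequence under termwise inclusion, $Z_i \subseteq V_i$ for every $i$.

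For the reverse inclusion, observe that $(V_i)$ is itself a prolongation sequence, hence equals $(B_i(W'))_{i \geq 0}$, where $W' := \{b \in \mathbb{A}^n : \nabla_\ell(b) \in V_\ell \text{ for all } \ell \geq 0\}$ is the differential variety corresponding to it. Any $b \in W'(\mathcal{U})$ satisfies $\nabla_h(b) \in V_h \subseteq V$, so $W'(\mathcal{U}) \subseteq W(\mathcal{U})$; applying $\nabla_i$ and taking Zariski closures gives $V_i = B_i(W') \subseteq B_i(W) = Z_i$. Combining the two inclusions yields $V_i = Z_i = B_i(W)$ for all $i$, which is the claimed formula, and the case $i = h$ gives $V_h = Z_h = V$.

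I do not expect a genuine obstacle here: the argument is bookkeeping with the dictionary between prolongation sequences and affine differential varieties, together with $B_i(W) = \overline{\nabla_i(W(\mathcal{U}))}$ and Remark~\ref{rk-pro-admissible}. The only point meriting attention is that $V$, $W$, and $W'$ need not be irreducible, so one must use the correspondence and the notion of prolongation admissibility in the forms stated after Definition~\ref{def-pro-seq} and in Remark~\ref{rk-pro-admissible}, which are given for arbitrary varieties; with those in hand no further work is required.
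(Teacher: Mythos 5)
Your proof is correct and follows essentially the same route as the paper's: exhibit the sequence $B_i(W)$ for $W=\{b:\nabla_h(b)\in V\}$, use Remark~\ref{rk-pro-admissible} to get $B_h(W)=V$, and use maximality (via the correspondence between prolongation sequences and differential varieties) to identify it with $(V_i)$. The paper's version is just a terser statement of the same three steps, so no further comment is needed.
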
 
\begin{proof}
It follows from  Definition \ref{def-pro-seq}  that $(V_i) _{i \in \m N}$ forms a prolongation sequence. 
It is easy to see that $(V_i) _{i \in \m N}$  is maximal among all prolongation sequences $(W_i)_{i\in\m N}$ with $W_h  \subseteq   V.$ 
And by Remark \ref{rk-pro-admissible}, $V_h = V$.
\end{proof} 

\vskip2pt 

Irreducible prolongation admissible varieties are of special interest in this paper.
The following lemma shows that the algebraic characteristic sets of irreducible prolongation admissible varieties have a special form. 
\begin{lem} \label{lem-charsetprolongationadm}
Let $V\subset\tau_h\m A^{n}$ be an irreducible prolongation admissible variety and $\mathcal{A}$ a characteristic set of $V$ under the standard orderly ranking. 
Rewrite ${\mathcal{A}}$  in the following form
  \begin{equation} \label{eq-charproadm}
  {\mathcal{A}}=\begin{array}{lll} \langle A_{10}, &\ldots,&A_{1 \ell_1}\\ \ldots&\ldots&\ldots\\ A_{p0}, &\ldots,&A_{p \ell_p} \rangle \\ \end{array}\end{equation}
  such that $\lv(A_{ij})=x_{\sigma(i)}^{(o_{ij})}\,(j=0,\ldots,\ell_i)$ and $o_{i0}<o_{i1}<\cdots<o_{i \ell_i}$.
  Then for each $i,$ $\ell_i=h-o_{i0}$, and  if $\ell_i>1$, then for all $k=1,\ldots, \ell_i$, $o_{ik}=o_{i0}+k$ and $A_{ik}$ is linear in $x_{\sigma(i)}^{(o_{i0}+k)}$.
  Moreover, $\mathcal{A}$ is a consistent differential system.
\end{lem}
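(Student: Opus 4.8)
The plan is to unwind the meaning of "$\mathcal A$ is a characteristic set of $V$" via Remark~\ref{remark-algdiffcharset} in reverse: although $V$ is an algebraic variety living in $\tau_h\mathbb A^n$, prolongation admissibility forces it to "remember" a differential structure, and I want to extract from this a relationship between $\mathcal A$ and a differential characteristic set of the differential variety $W$ corresponding to the prolongation sequence $(V_i)_{i\in\mathbb N}$ generated by $V$. Concretely, by Lemma~\ref{proladmiss} we have $V_h=V$ and the $V_i$ for $i>h$ are obtained by repeatedly differentiating, so the prime ideal $\mathcal J=\bigcup_i I(V_i)\subseteq K\{x_1,\dots,x_n\}$ is a prime differential ideal with $\mathcal J\cap K\{x_1,\dots,x_n\}_{\le h}=I(V)$. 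The first step is thus to fix a differential characteristic set $\mathcal B$ of $\mathcal J$ under the orderly ranking $\mathscr R$, and record, via Remark~\ref{remark-algdiffcharset}, that $\mathcal A':=\{\delta^k f: f\in\mathcal B,\ \ord(f)\le h,\ k\le h-\ord(f)\}$ is an algebraic characteristic set of $I(V)=\mathcal J\cap K\{x_1,\dots,x_n\}_{\le h}$. Since characteristic sets of a prime ideal all have the same rank, $\mathcal A$ and $\mathcal A'$ have the same rank, and in fact (the $A_{ij}$ being reduced pairwise) one can match them up leader-by-leader.

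The second step is to read off the structure of $\mathcal A'$ directly. Write $\mathcal B=\langle B_1,\dots,B_p\rangle$ with $\lv$-leaders involving the variables $x_{\sigma(1)},\dots,x_{\sigma(p)}$ (distinct, since $\mathcal B$ is auto-reduced), and let $o_{i0}=\ord(B_i)$ so that $\ld(B_i)=x_{\sigma(i)}^{(o_{i0})}$. Then the block of $\mathcal A'$ coming from $B_i$ is $B_i,\delta B_i,\dots,\delta^{h-o_{i0}}B_i$, which has exactly $\ell_i+1=h-o_{i0}+1$ elements, giving $\ell_i=h-o_{i0}$ immediately. Moreover $\delta^k B_i$ has leader $x_{\sigma(i)}^{(o_{i0}+k)}$ (its separant appears as the coefficient), so $o_{ik}=o_{i0}+k$, and for $k\ge 1$, $\delta^k B_i=\Sep_{B_i}\cdot x_{\sigma(i)}^{(o_{i0}+k)}+(\text{terms of lower rank})$, hence $\delta^k B_i$ is linear in its leader $x_{\sigma(i)}^{(o_{i0}+k)}$. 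Matching $\mathcal A$ to $\mathcal A'$ block-by-block (same leaders, same degrees in the leaders), the same conclusions transfer to the $A_{ij}$: $\ell_i=h-o_{i0}$, $o_{ik}=o_{i0}+k$, and $A_{ik}$ is linear in $x_{\sigma(i)}^{(o_{i0}+k)}$ for $k\ge 1$. The hypothesis "$\ell_i>1$" is only needed to guarantee there exist indices $k\ge 1$ to talk about.

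The last step is consistency: "$\mathcal A$ is a consistent differential system" means $\sat(\mathcal A)\ne (1)$ — equivalently, the differential variety it cuts out is nonempty. But $\mathcal B$ is a differential characteristic set of the \emph{prime} differential ideal $\mathcal J$, which is proper (it defines the nonempty differential variety $W$, as $V$ is prolongation admissible and nonempty — here one invokes Lemma~\ref{highorder}/Remark~\ref{rk-pro-admissible} to see $V$ has a differential point, hence $W\ne\varnothing$). One then checks that $\sat(\mathcal A)$ and $\sat(\mathcal B)=\mathcal J$ coincide after saturation: indeed $\mathcal A\subseteq I(V)\subseteq\mathcal J$, and conversely every element of $\mathcal B$ differentiates into $\mathcal A$, so $[\mathcal A]$ and $[\mathcal B]$ generate the same differential ideal up to the multiplicative set $\H_{\mathcal A}^\infty=\H_{\mathcal B}^\infty$ (the initials and separants of $\delta^k B_i$ being, up to lower-rank factors, the separant of $B_i$). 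Thus $\sat(\mathcal A)=\sat(\mathcal B)=\mathcal J\ne(1)$, and $\mathcal A$ is consistent; in fact this shows $\mathcal A$ is a differential characteristic set of the prime differential ideal $\mathcal J$.

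I expect the main obstacle to be the bookkeeping in the second step — verifying rigorously that a characteristic set of $I(V)$ under the orderly ranking, when organized into blocks by leading variable, must have its leaders forming a \emph{consecutive} run of derivatives $x_{\sigma(i)}^{(o_{i0})},\dots,x_{\sigma(i)}^{(o_{i0}+\ell_i)}$ with $\ell_i=h-o_{i0}$, rather than skipping some orders. This is exactly where prolongation admissibility of $V$ is used: the inclusions $\rho_h(V)\subseteq\tau^{h-d}(\rho_d(\pi_{h,d}(V)))$ established after the definition of prolongation admissibility say that once a polynomial relation among the variables up to order $d$ holds on $\pi_{h,d}(V)$, its formal derivative holds on $\pi_{h,d+1}(V)$, which is what propagates each $B_i$ up through all intermediate orders and forbids gaps. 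Making this propagation argument airtight — ideally by phrasing it through Remark~\ref{remark-algdiffcharset} so that the gap-freeness is automatic from the differential characteristic set $\mathcal B$ — is the crux of the proof.
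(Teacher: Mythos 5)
Your strategy is the paper's: identify $I(V)$ with the order-$\leq h$ truncation of a differential ideal, prolong a differential characteristic set of that ideal via Remark~\ref{remark-algdiffcharset} to obtain an algebraic characteristic set of $I(V)$ with the stated block structure, and transfer everything to $\mathcal{A}$ using the fact that any two characteristic sets of the prime ideal $I(V)$ have the same rank (same length and, element-wise, the same leader and the same degree in that leader). That last transfer, which you call matching leader-by-leader, is exactly the paper's one-line conclusion, and your reading of the blocks $B_i,\delta B_i,\dots,\delta^{h-\ord(B_i)}B_i$ is correct.

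There is, however, one load-bearing false claim. The ideal $\mathcal{J}=\bigcup_i I(V_i)=\mathbb{I}(W)$, where $W$ is the differential variety of the prolongation sequence generated by $V$, is \emph{not} prime in general: $W$ can be reducible even when $V$ is irreducible and prolongation admissible (see Example~\ref{ex-2}, where the differential variety attached to $V(x'^2-4x)$ splits into the general component and $\mathbb{V}(x)$, and the last example of Section~\ref{sec-prolongation}). So $\mathcal{J}$ is merely radical. This breaks two steps as written: Remark~\ref{remark-algdiffcharset} is stated only for prime differential ideals, so you cannot quote it for $\mathcal{J}$ without first extending it; and your consistency argument rests on $\sat(\mathcal{B})=\mathcal{J}$, which is the primality criterion of Lemma~\ref{th-sat} and genuinely fails here (in Example~\ref{ex-2}, $\sat(x'^2-4x)$ strictly contains the radical differential ideal of the full zero set of $x'^2-4x$). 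The repair is exactly the paper's move: since $V$ is irreducible and $B_h(W)=\bigcup_i B_h(W_i)=V$, some irreducible component $W_{i_0}$ of $W$ satisfies $B_h(W_{i_0})=V$, i.e.\ $\mathbb{I}(W_{i_0})\cap K\{x_1,\dots,x_n\}_{\leq h}=I(V)$ with $\mathbb{I}(W_{i_0})$ prime; run your argument with $\mathcal{B}$ a differential characteristic set of $\mathbb{I}(W_{i_0})$ instead. (For the consistency claim, a cleaner route than comparing saturations: $\mathcal{A}\subseteq I(V)$, the initials and separants of $\mathcal{A}$ are reduced with respect to $\mathcal{A}$ and hence not in $I(V)$, and prolongation admissibility gives a Zariski-dense set of differential points of $V$, so a generic differential point of $V$ is a differential solution of $\mathcal{A}=0$ with $\H_{\mathcal{A}}\neq 0$.)
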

 
 \proof  Let $W$ be the differential variety associated to the prolongation sequence generated by $V$.
 Suppose $W=\bigcup_{i}W_i$ is an irredundant irreducible decomposition of $W$.
 Then $B_h(W)=\bigcup_{i}B_h(W_i)=V$.
 Since $V$ is irreducible, there exists $i_0$ such that $B_h(W_{i_0})=V$. 
 Let $\mathcal{B}^\delta:= \langle B_{1},B_2,\ldots,B_m \rangle$ be a differential characteristic set of $W_{i_0}$ and suppose for $i\leq p$, $s_i=\ord(B_i)\leq h$
 and for $i>p$, $\ord(B_i)>h$. Then by Remark \ref{remark-algdiffcharset}, $\langle B_1,B_1',\ldots,B_1^{(h-s_1)},\ldots,B_p,B_p',\ldots,B_p^{(h-s_p)} \rangle$ is an algebraic characteristic set of $V$.
 Since any two characteristic sets of $V$  have the same rank,
  $\mathcal{A}$ has the desired form described as above.
 \qed


\begin{defn}
Let $V$ be an irreducible prolongation admissible variety of $\tau _h (\m A^n)$
and $W$ be the differential variety corresponding to the prolongation sequence generated by $V$. 
A component $W_1$ of $W$ is called a {\em dominant} component if it satisfies $B_h(W_1)=V$. 
\end{defn}

\begin{lem} \label{uniquegenericomponent}
 Let $V$ be an irreducible prolongation admissible subvariety of $\tau _h (\m A^n)$. 
 Let $W$ be the differential variety corresponding to the prolongation sequence generated by $V$. 
Then there is a unique dominant component. 
Moreover, the Kolchin polynomial of the dominant component $W_1$ has the form $$\omega_{W_1}(t)=\big(\dim(V)-\dim(\overline{\pi_{h,h-1}(V)})\big)(t-h)+\dim(V),$$
and for $t\geq h-1$, $\dim(B_t(W_1))=\omega_{W_1}(t)$.\end{lem}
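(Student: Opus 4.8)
The plan is to first pin down the unique dominant component, then compute its Kolchin polynomial by controlling the characteristic set, and finally deduce the statement $\dim(B_t(W_1))=\omega_{W_1}(t)$ for $t\geq h-1$. For the uniqueness of the dominant component: let $W=\bigcup_i W_i$ be the irredundant irreducible decomposition of the differential variety $W$ associated to the prolongation sequence generated by $V$. Since $V$ is irreducible and $B_h(W)=\bigcup_i B_h(W_i)=V$, at least one $B_h(W_{i_0})$ equals $V$; so a dominant component exists. For uniqueness, suppose $W_1$ and $W_2$ are both dominant. One argues via generic points: if $\eta$ is a generic point of $W_1$ and $\zeta$ a generic point of $W_2$, then $\nabla_h(\eta)$ and $\nabla_h(\zeta)$ are both generic points of $V$, hence have the same algebraic locus over $K$. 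The key point is that a generic differential point of $V$ (in the sense of Fact~\ref{geoax}/Lemma~\ref{highorder}) determines its full prolongation sequence: knowing $\nabla_h(a)$ to be generic in $V$ forces $\nabla_t(a)$ to be generic in $V_t=B_t(W_1)$ for all $t$, so $\eta$ and $\zeta$ are interalgebraic over $K$ and lie on a common irreducible differential variety; by irredundancy $W_1=W_2$. Concretely, this is most cleanly executed using Lemma~\ref{lem-charsetprolongationadm}: the dominant component has a differential characteristic set $\mathcal{B}^\delta$ whose order-$\leq h$ part, prolonged, is an algebraic characteristic set of $V$, and the ``linear in the top derivative'' structure from Lemma~\ref{lem-charsetprolongationadm} shows that once $\nabla_h(a)\in V$ is generic, each further derivative $\delta^{h+1}a,\delta^{h+2}a,\dots$ is determined rationally, so the differential locus of $a$ over $K$ is forced — there is only one such component.

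Next I would compute the Kolchin polynomial. By Lemma~\ref{def-order-sadik}, if $\mathcal{B}^\delta=\langle B_1,\dots,B_m\rangle$ is a differential characteristic set of $W_1$ under an orderly ranking, then $\delta\text{-}\dim(W_1)=n-m$ and $\ord(W_1)=\sum_i \ord(B_i)$, and $\omega_{W_1}(t)=(n-m)(t+1)+\ord(W_1)$. The task is to express $n-m$ and $\ord(W_1)$ in terms of $\dim(V)$ and $\dim(\overline{\pi_{h,h-1}(V)})$. Using the notation of Lemma~\ref{lem-charsetprolongationadm}, write the algebraic characteristic set $\mathcal{A}$ of $V$ in blocks $\langle A_{i0},\dots,A_{i\ell_i}\rangle$, $i=1,\dots,p$, with $\ell_i=h-o_{i0}$; the $B_i$ with $\ord\leq h$ correspond to the rows $A_{i0}$, and $\ord(B_i)=o_{i0}$, while $m-p$ counts the $B_i$ of order $>h$, which (again by the linearity structure, i.e. the absence of genuinely new low-order relations past level $h$) must be such that $n-m = n-p-(\text{number of rows of length }1)$... more precisely: $\dim(V) = n(h+1)-\sum_i(\ell_i+1) = n(h+1)-\sum_i(h-o_{i0}+1)$ by the second part of Lemma~\ref{def-order-sadik} applied to $V\subset\tau_h\mathbb{A}^n=\mathbb{A}^{n(h+1)}$, and $\dim(\overline{\pi_{h,h-1}(V)})=nh-\sum_i(h-o_{i0})=\dim(V)-(n-p)$ after removing the top-derivative variables (each block of length $\geq 2$ loses exactly its top generator because $A_{i\ell_i}$ is linear in $x_{\sigma(i)}^{(h)}$, while blocks with $o_{i0}=h$ contribute to the cut). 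Solving, $n-p=\dim(V)-\dim(\overline{\pi_{h,h-1}(V)})$, and one identifies $n-m$ with this same quantity and $\ord(W_1)=\sum o_{i0}$ via a short bookkeeping check; substituting into $\omega_{W_1}(t)=(n-m)(t+1)+\ord(W_1)$ and rearranging gives exactly $\omega_{W_1}(t)=\big(\dim(V)-\dim(\overline{\pi_{h,h-1}(V)})\big)(t-h)+\dim(V)$, using that $\omega_{W_1}(h)=\dim(V)$ (which holds because $B_h(W_1)=V$ and $t=h$ is large enough for the stated form once $h\geq$ the order).

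Finally, for $\dim(B_t(W_1))=\omega_{W_1}(t)$ when $t\geq h-1$: since $W_1$ is a dominant component, $B_t(W_1)=V_t$ is the level-$t$ term of the prolongation sequence generated by $V$ (Lemma~\ref{proladmiss}), and $\dim(B_t(W_1))=\trdeg\big(\mc Q(K\{x\}_{\leq t}/(\mathcal{I}\cap K\{x\}_{\leq t}))/K\big)$ where $\mathcal{I}=\I(W_1)$. By Remark~\ref{remark-algdiffcharset} the prolonged characteristic set controls $\mathcal{I}\cap K\{x\}_{\leq t}$ exactly, and by the linearity structure of Lemma~\ref{lem-charsetprolongationadm} (each new derivative past the characteristic set's leaders is algebraic of degree one over the earlier variables), the transcendence degree grows linearly in $t$ with slope $\delta\text{-}\dim(W_1)=\dim(V)-\dim(\overline{\pi_{h,h-1}(V)})$ once $t\geq h-1$, matching $\omega_{W_1}(t)$; the threshold $h-1$ rather than $h$ comes from the fact that the characteristic set's highest-order leaders sit at level $h$ and the recursion for $V_t$ stabilizes one step earlier. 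The main obstacle I expect is the uniqueness of the dominant component — making rigorous the claim that a generic differential point of $V$ has a forced prolongation sequence, which requires careful use of the structure theorem (Lemma~\ref{lem-charsetprolongationadm}) together with Fact~\ref{geoax}; the Kolchin polynomial computation is then essentially a dimension-counting exercise, and the last assertion follows from tracking when the generic behavior of the prolongation sequence sets in.
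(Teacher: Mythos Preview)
Your overall strategy matches the paper's: use the structure of the algebraic characteristic set of $V$ from Lemma~\ref{lem-charsetprolongationadm}, extract the ``row leaders'' $\mathcal{B}^\delta=\langle A_{10},\ldots,A_{p0}\rangle$, and identify the dominant component with $\mathbb{V}(\sat(\mathcal{B}^\delta))$. However, your uniqueness argument as written has a circularity: you claim that for a generic differential point $a$ of $V$, ``$\nabla_t(a)$ is generic in $V_t=B_t(W_1)$ for all $t$'', but $V_t$ (the $t$-th term of the prolongation sequence generated by $V$) is $B_t(W)$, not $B_t(W_1)$, and in any case you cannot invoke $W_1$ while proving $W_1$ is uniquely determined. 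The assertion $B_t(W_1)=V_t$ is also false for $t>h$ in general (see Example~\ref{ex-2}, where $V_2$ strictly contains $B_2(W_1)$). The paper avoids this by explicitly setting $T:=\mathbb{V}(\sat(\mathcal{B}^\delta))$, verifying $B_h(T)=V$ and $T\subseteq W$ directly, and then for any dominant component $W_{i_0}$ observing that $B_h(W_{i_0})=V$ forces $\sat(\mathcal{B}^\delta)\subseteq\mathbb{I}(W_{i_0})$ (since the $A_{i0}$ must appear in a characteristic set of $W_{i_0}$), hence the sandwich $W_{i_0}\subseteq T\subseteq W_{i_1}$ yields $W_{i_0}=T$.

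Your Kolchin polynomial computation is also more laborious than needed. Once you know the dominant component $T$ has a differential characteristic set consisting of polynomials of order $\leq h$, it follows immediately that $\omega_T(t)=\dim(B_t(T))$ for $t\geq h-1$; then simply evaluate at $t=h$ (where $B_h(T)=V$) and $t=h-1$ (where $B_{h-1}(T)=\overline{\pi_{h,h-1}(V)}$) to read off the two coefficients of the linear polynomial $\omega_T$. There is no need to separately count $m$, $p$, rows of length one, etc.; your confusion between $m$ and $p$ (``$n-m=n-p-(\text{number of rows of length }1)$\ldots'') stems from trying to track the a priori possibility of characteristic set elements of order $>h$, which the paper's construction of $T$ shows cannot occur.
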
 

\proof 
 Since $V$ is prolongation admissible, by Lemma~\ref{lem-charsetprolongationadm},
 $V$ has a characteristic set $\mathcal{A}$ of the form  
 \begin{equation} \label{eq-charset} \nonumber
  {\mathcal{A}}=\begin{array}{lll} \langle A_{10}, &\ldots,&A_{1 \ell_1}\\ \ldots&\ldots&\ldots\\ A_{p0}, &\ldots,&A_{p \ell_p} \rangle \\ \end{array}.\end{equation}
  satisfying the corresponding property in Lemma~\ref{lem-charsetprolongationadm}.

 Let $\mathcal{B}^\delta= \langle A_{10}, A_{20}, \ldots, A_{p0} \rangle$.
 Then it is clear that $\mathcal{B}^\delta$ is a differential auto-reduced set which is irreducible.
 Then by Lemmas \ref{th-sat} and \ref{def-order-sadik}, $T=\V(\sat(\mathcal{B}^\delta))$ is an irreducible differential variety of differential dimension $n-p$.
 We first claim that $B_h(T)=V$. Indeed, since $V$ is prolongation admissible, $I(V)=\mathbb{I}(W)\cap K[(x_{i}^{(j)})_{1\leq i\leq n;j\leq h}]$.
 So $A_{i0}\in \mathbb{I}(W)$ implies  $A_{i0}^{(\ell)}\in I(V)$ for $\ell\leq h-\ord(A_{i0})$.
 Let  $\mathcal{C}:=\langle A_{10},\ldots, A_{10}^{(h-\ord(A_{10}))},\ldots,A_{p0},\ldots, A_{p0}^{(h-\ord(A_{p0}))} \rangle$.
 Then we have $\asat(\mathcal{C})=\sat(\mathcal{B}^\delta)\cap K[(x_{i}^{(j)})_{1\leq i\leq n;j\leq h}]$ and correspondingly, $B_h(T)=V(\asat(\mathcal{C}))$. 
 On the other hand, since $\mathcal{C}\subset I(V)=\asat(\mathcal{A})$ and the auto-reduced property of $\mathcal{A}$ implies that the initial and separant of $A_{i0}$ are all reduced with respect to $\mathcal{A}$,  $\asat(\mathcal{C}) \subseteq   \asat(\mathcal{A})$.
Note that $\asat(\mathcal{C})$ and $\asat(\mathcal{A})$  are prime ideals of the same dimension,
so we have $\asat(\mathcal{C})=\asat(\mathcal{A})$.
 Thus,  $B_h(T)=V(\asat(\mathcal{A}))=V$.

 We now show that $T \subseteq   W$. Let ${b}\in\mathbb A^n$ be a generic point of $T$.
 By Lemma  \ref{proladmiss}, the prolongation sequence $(V_i)_{ i\in \m N}$ generated by $V$  has the property that  for each $i$, 
 $$V_i = \overline{\{ ( a , \delta  a , \ldots , \delta ^i  a ) \, | \,  a \in \mc U, \, ( a , \delta  a , \ldots \delta ^h  a ) \in V \}}.$$
 So $W=\{a \in \mathbb A^n: \nabla_i(a)\in V_i, i\in\mathbb{N}\}$.
 Since $B_h(T)=V$, $\nabla_h(b)\in V$, $b \in W$. So $T \subseteq   W$.
  It remains to show that $T$ is the unique dominant component of $W$.

Let $W=\bigcup_iW_i$ be the irredundant irreducible decomposition of $W$.
 Since $B_h(W)=V$, $B_h(W_i) \subseteq   V$ for each $i$ and  there exists at least one  component $W_i$ satisfying $B_h(W_i)=V$, that is, a dominant component.
 Now suppose $W_{i_0}$ is an arbitrary dominant component of $W$.  
 Then $B_h(W_{i_0})=V$ and by Remark \ref{remark-algdiffcharset},  $\mathcal{B}^\delta$ may be  a subset of a differential characteristic set of  $W_{i_0}$.
So $\sat(\mathcal{B}^\delta)\subseteq \m I(W_{i_0})$ and $W_{i_0}\subseteq T$ follows. 
Since $T \subseteq   W$, there exists $i_1$ such that $T\subseteq W_{i_1}$.
From $W_{i_0}\subseteq T\subseteq W_{i_1}$, we have $W_{i_0}=T= W_{i_1}$. 
 Thus, $T$ is the unique dominant component of $W$.

For the second assertion, since the order of the polynomials in a characteristic set of $T$ under an orderly ranking 
is bounded by $h$,
 $\omega_T(t)=\dim(B_t(T))$  for $t\geq h-1$. 
 Since $\omega_T(t)=\delta$-$\dim(T)(t+1)+\ord(T)$, $B_h(T)=V$ and $B_{h-1}(T)=\pi_{h,h-1}(V)$,
 $\omega_T(t)$ has the desired form.
\qed

\begin{rem} \label{rem-uniquegenericomponent}
Given an irreducible prolongation admissible variety $V \subseteq   \tau_h(\m A^n)$,
 Lemma \ref{uniquegenericomponent} tells us that the differential variety $W$ corresponding to the prolongation sequence generated by  $V$ has a unique dominant component, $W_1$.
Apart from the above,  Lemma \ref{uniquegenericomponent} and its proof give us more information of  $W_1$,
which we would like to point out here to make things clearer.

First, both the differential dimension and order of $W_1$ can be computed from $V$.
To be more specific,  the differential dimension of $W_1$ is equal to $d=\dim(V)-\dim(\overline{\pi_{h,h-1}(V)})$,
 and the order of $W_1$ is equal to $\dim(V)-d(h+1)$.

Second, $W_1$ can be recovered from $V$ in terms of characteristic sets:
Let $\mathcal{A}:= \langle A_{10},A_{11},\ldots,A_{1\ell_1},\ldots,A_{p0},A_{p1},\ldots,A_{p\ell_p} \rangle$ be
an algebraic characteristic set of $V$ arranged as in the form (\ref{eq-charset}).
Then $\mathcal{B}:= \langle A_{10},A_{20},\ldots,A_{p0} \rangle$ is a differential characteristic set of $W_1$,
 and $\sat(\mathcal{B})$ is the defining differential ideal of $W_1$.

In addition, as we will see in Section \ref{Chowsection}, the differential Chow form of $W_1$ and the index of $W_1$ can be computed from $V$.
The previous result has a partial differential analog, but the situation is more complicated. See~\cite{FLL2015} for details. 
\end{rem}

We conclude this section by giving  simple examples to illustrate  prolongation admissible varieties as well as results in Lemma \ref{uniquegenericomponent} and Remark \ref{rem-uniquegenericomponent}.

\begin{example} \label{ex-1}
Let $V\subset\tau_2\mathbb A^2$ be the algebraic variety defined by $x_1'=0$ and $x_2''=0.$
Then the prolongation sequence generated by $V$ is $(V_\ell\subseteq\tau_\ell\mathbb{A}^2)_{\ell=0}^\infty$ where $V_0=\mathbb A^2$,
$V_1=V(x_1')\subset \tau_1\mathbb{A}^2$, $V_2=V(x_1',x_1'', x_2'')\subset\tau_2\mathbb{A}^2$ and $V_\ell=V(x_1',x_1'',\ldots,x_1^{(\ell)},x_2'',\dots,$ $x_2^{(\ell)}) \subseteq  \tau_\ell\mathbb{A}^2\,(\ell\geq 3)$.
Since $V_2\subsetneq V$, $V$ is not a prolongation admissible variety.

Let $U=V_2=V(x_1',x_1'', x_2'')\subset\tau_2\mathbb A^2$.
Then as illustrated above, $U$ is a prolongation admissible variety and the prolongation sequence generated by $U$ is just the same $(V_\ell\subseteq\tau_\ell\mathbb{A}^2)_{\ell=0}^\infty$ as above.
The differential variety corresponding to $(V_\ell\subseteq\tau_\ell\mathbb{A}^2)_{\ell=0}^\infty$ is $W=\mathbb V(x_1',x_2'')$, which is irreducible, hence also the dominant component of itself. Note that $U=B_2(W)$.
\end{example}

The following examples show how each  irreducible differential variety $W$ of order $h$ can be determined by the corresponding prolongation admissible variety $B_h(W)$.

\begin{example} \label{ex-3}
Let $W=\mathbb V(x_1',x_2'')\subset\mathbb A^2$ be the irreducible differential variety defined by $x_1'=0, x_2''=0$. 
Clearly, the  order of $W$ is $h=3$ and $V=B_3(W)=V(x_1',x_1'', x_1^{(3)},x_2'', x_2^{(3)})\subset\tau_3\mathbb A^2$ is prolongation admissible.
The prolongation sequence generated by $V$ is the same $(V_\ell)_{\ell=0}^\infty$ as in Example \ref{ex-1}.
As illustrated in Example \ref{ex-1}, then the differential variety corresponding to $(V_\ell)_{\ell=0}^\infty$ is $W$, which is the unique dominant component of itself.
\end{example}

\begin{example} \label{ex-2}
Let $W=\mathbb V(x'^2-4x, x''-2)\subset\mathbb A^1$. 
It is clear that $W$ is an irreducible differential variety of  order $h=1$.
Moreover, $V=B_1(W)=V(x'^2-4x)\subset\tau_1\mathbb A^1$ is prolongation admissible and $\mathcal{A}:=x'^2-4x$ is an algebraic characteristic set of $V$ under the ordering $x<x'$. 
The prolongation sequence generated by $V$ is $(V_\ell)_{\ell=0}^\infty$ with $V_0=\mathbb{A}^1$,
$V_1=V$, $V_2=V(x'^2-4x, x'(x''-2))\subset\tau_2\mathbb{A}^1$ and $V_\ell=V([x'^2-4x]\cap K[x,x',\ldots,x^{(\ell)}])\subset\tau_\ell\mathbb{A}^1\,(\ell\geq 3)$.
Then the differential variety corresponding to $(V_\ell)_{\ell=0}^\infty$ is $X=\mathbb V(x'^2-4x)$, which has two components, 
the general component $W$ in Ritt's sense and the singular component $\mathbb V(x)$. 
Clearly, $W$ is the unique dominant component of $X$, which happens to be a generic component of $X$.
Moreover, from the proof of  Lemma \ref{uniquegenericomponent}, $\mathcal{B}^\delta:= \langle x'^2-4x \rangle$ is a differential characteristic set of $W$,
so $W=\sat(\mathcal{B}^\delta)$ can be determined by $V=B_1(W)$, which coincides with the classical result.
\end{example}

Generic components of a differential variety are components with maximal Kolchin polynomial.
One might think that the differential variety associated to an irreducible prolongation admissible variety also has a unique generic component and the generic component is also the dominant one. Unfortunately, this is invalid.
The following example shows how generic components and dominant components may differ. 
 \begin{example} \label{ex-3}
Let $n=3$, $f_1=y_1''^2-4y_1', f_2=y_1'y_2''+y_2^2-1,$ and $f_3=y_1y_3''+y_2^2-1$.
Let $V={V}(f_1,f_2,f_3)\subset\tau_2 \m A^3$. 
  With the help of Maple, we know $I=(f_1,f_2,f_3)\subset K[y_i^{(j)}:j\leq 2]$ is a prime ideal of dimension 6.
  So $\sat(f_1,f_2,f_3)\cap K[(y_i^{(j)}):j\leq 3]=I$  and $V=V(I)$ is irreducible and prolongation admissible.
 By~Remark \ref{rem-uniquegenericomponent},
  the   differential variety $W={\mathbb V}(f_1,f_2,f_3)\subset\m A^3$ has a unique dominant component $\mathbb{V}\big(\sat(f_1,f_2,f_3)\big)$.
  
  Performing the Rosenfeld-Groebner algorithm, one can show that $W$ has 
  two generic components, $W_1={\mathbb V}(y_1,y_2-1)$ and $W_2={\mathbb V}(y_1,y_2+1)$. 
 Clearly, both $B_2(W_1)$ and  $B_2(W_2)$ are properly contained in $V$, 
 so both of them  are not dominant components.
\end{example}

\subsection{Definability in the theory of  DCF$_0$ and Ritt problem}

We shall speak of definable families of definable sets and of certain properties
being definable in families.   These are general notions but we shall use them 
only for the theories of algebraically closed fields of characteristic zero
and of differentially closed fields of characteristic zero.   In these cases,
``definable'' is synonymous with ``constructible'' or ``differentially constructible'',
respectively.

\begin{defn}
We say that a 
family of sets $\{ X_{a} \}_{a \in B}$ is a \emph{definable family} if there
are formulae $\psi (x; y)$ and $\theta(y)$ so that $B$ is the set of realizations 
of $\theta$ and for each $a \in B$,  $X_{a}$ is the set of realizations 
of $\psi(x;a)$.   

Given a property $\mc P$ of definable sets, we say that $\mc P$ is definable in families if for any family of definable sets $\{ X_a \}_{a \in B}$ 
given by the formulae $\psi(x;y)$ and 
$\theta(y)$, there is a formula $\phi(y)$ so that the set 
$\{ a \in B ~:~ X_a \text{ has property } \mc P \}$ is defined by $\phi$. 

Given an operation $\mc F$ which takes a set and returns another set, we 
say that $\mc F$ is definable in families if for any  
family of definable sets $\{ X_a \}_{a \in B}$  
given by the formulae $\psi(x;y)$ and 
$\theta(y)$, there is formula $\phi(z;y)$ so that for each $a \in B$, 
the set $\mc F (X_a)$ is defined by $\phi(z;a)$. 
\end{defn} 

Of particular importance for us will be several specific incarnations of $\mc F$ from the previous definition. Consider the definable 
family $\{ X_a \}_{a \in B}$ (which we assume to be definable in some theory expanding the theory of fields). If $\mc F$ takes 
the set of points $X_a$ and returns the Zariski closure of the set over $K$, then proving that $\mc F$ is definable in families in a 
given theory amounts to establishing a bound for the degree of the polynomials which define the Zariski closure of $X_a$ which 
depends on the formula defining $X_a$, but is independent of the element $a$. In the theory of differentially closed fields, the 
main results of \cite{JOnfcp} establish results of this form. 

Given a definable family $\{ X_a \}_{a \in B}$   of Zariski closed sets over some field $K$, another important example occurs 
when $\mc F$ takes the closed set $X_a  \subseteq  \m A^n$ and returns the set of components of $X_a$.  
In this case, we need to take some care in presenting $\mc F$ as an operation.  Strictly speaking, ${\mc F}(X_a)$ is 
a finite set of algebraic subvarieties of $\m A^n$.    In general, if $\{ X_a \}_{a \in B}$ is a constructible family of 
(possibly reducible) varieties, then there is a bound $N$ on the number of irreducible components of $X_a$ depending
just on the family.  Thus, ${\mc F}(X_a)$ may be presented as a sequence of subvarieties of $\m A^n$ of 
length at most $N$, up to reordering.  Using the theory of symmetric polynomials (or elimination of 
imaginaries relative to the theory of algebraically closed fields), such a finite set (or a finite sequence up to permutation) 
may be represented by a finite sequence.       

We will require the following facts about definability in algebraically closed fields. 

\begin{fact} \label{factdef} We work relative to the theory of algebraically 
closed fields (ACF).   
\begin{enumerate} 
\item \label{clodef} The Zariski closure is definable in families. 
\item \label{dimdegdef} The dimension and degree of the Zariski 
closure of a set are definable in families. 
\item \label{irrdef} Irreducibility of the Zariski closure is a definable property. 
More generally, the number of components of the Zariski closure
is definable in families. 
\item \label{hypdegvardef} If the Zariski closure is an irreducible hypersurface given by the vanishing of some nonzero polynomial, then the degree of that
polynomial in any particular variable is definable in families. 
\item \label{compmap} The family of irreducible components of the 
Zariski closure is definable in families. 
\end{enumerate} 
\end{fact}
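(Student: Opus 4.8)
The plan is to deduce all five assertions from quantifier elimination in ACF together with uniform finiteness and degree bounds of B\'ezout type, the uniformity always being in the parameter $a$. The first step uses quantifier elimination to write $X_a$, uniformly in $a$, as a finite union of at most $N_0 = N_0(\psi)$ sets of the form $Z_a \setminus W_a$ with $Z_a$ and $W_a$ Zariski closed and defined by polynomials of degree at most $d_0 = d_0(\psi)$ whose coefficients are fixed polynomials in $a$; since $\overline{X_a}$ is the union of the closures $\overline{Z_a \setminus W_a}$, it suffices to treat one such piece. The second ingredient is a uniform B\'ezout-type bound (in the spirit of \cite{Heintz}): a closed subset of $\m A^n$ defined by equations of degree $\le d$ has at most $D_0 = D_0(d,n)$ irreducible components, each of degree $\le D_0$, and each defined by equations of degree $\le D_0$ (the last via an effective Nullstellensatz bound). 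The effect of these two steps is that all the objects we are forced to quantify over --- components, generating sets of ideals, Chow forms --- have complexity bounded in terms of $\psi$ alone.

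Granting this, I would argue as follows. For (\ref{irrdef}) and (\ref{compmap}): a proper closed subset of a closed set of degree $\le D_0$ again has all its components of degree $\le D_0$, so ``$Y$ is reducible'' is the first-order statement ``there exist closed sets $Y_1, Y_2$ of degree $\le D_0$ with $Y_1, Y_2 \subsetneq Y$ and $Y = Y_1 \cup Y_2$'', using that inclusion and equality of bounded-degree closed sets are plainly first-order (e.g. $V(A)\subseteq V(B)$ iff $\forall x\,(A(x)=0\to B(x)=0)$). One then existentially quantifies, over bounded data, for a decomposition of $Z_a$ into at most $D_0$ pairwise incomparable irreducible closed sets of degree $\le D_0$ with union $Z_a$; uniqueness of the irreducible decomposition shows this pins down the components, and one encodes the resulting finite set of components as a single finite tuple via symmetric functions (or equivalently via Chow coordinates), exactly as indicated before the statement. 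Part (\ref{clodef}) follows because $\overline{Z_a \setminus W_a}$ is the union of those components of $Z_a$ not contained in $W_a$ --- ``not contained in $W_a$'' being first-order, and a union of at most $D_0$ closed sets of degree $\le D_0$ being manifestly definable in families --- and one then reassembles over the finite union from the first step. For (\ref{dimdegdef}): $\dim(\overline{X_a}) \ge k$ iff the restriction to $\overline{X_a}$ of the projection to some $k$-element set of coordinates is dominant, which is first-order and uniform once one uses the B\'ezout bound to cap the degree of a hypersurface that could witness non-dominance; and once the dimension is pinned down the degree of $\overline{X_a}$ is the B\'ezout-bounded, hence definably counted, number of points of its intersection with a generic linear space of complementary dimension. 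For (\ref{hypdegvardef}): if $\overline{X_a}$ is an irreducible hypersurface its ideal is principal, generated by an irreducible $f_a$ of degree $\le D_0$ unique up to a scalar, and $\deg_{x_i}(f_a)$ is the (again B\'ezout-bounded) number of points in which $V(f_a)$ meets a generic line parallel to the $x_i$-axis, hence definable in families.

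The two substantial points are, first, securing the uniform B\'ezout and effective-Nullstellensatz bounds in exactly the form used above --- bounds, depending only on $d$ and $n$, on the number and degrees of the irreducible components of, and on the degrees of generators of the radical of, an ideal generated in degree $\le d$ in $\m A^n$ --- which one can obtain non-constructively by compactness over the Hilbert basis theorem (an infinite strictly ascending chain of ideals, or an unbounded family of finite definable sets, in an ultraproduct of algebraically closed fields would contradict Noetherianity); and second, the structural heart, the definability of irreducibility and of the component decomposition. The care required in the second point is to check simultaneously that the existential clause ``there is a decomposition into irreducibles'' is genuinely first-order --- which is precisely why the B\'ezout bound is needed, to confine the search to candidates of bounded complexity --- and that it correctly identifies the components, and to note that the apparent circularity of using ``irreducible'' inside the decomposition formula does not occur, since proper closed subsets of a degree-$\le D_0$ closed set are themselves of degree $\le D_0$. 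I expect assembling the B\'ezout input and carrying out this bookkeeping to be the main obstacle; the logical reductions among the five assertions are routine.
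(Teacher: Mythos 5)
Your proposal is correct in substance but takes a genuinely different route from the paper's appendix. You work entirely through uniform complexity bounds: quantifier elimination with degree control, a refined B\'ezout bound on the number and degrees of irreducible components, and the fact (Heintz, Proposition 3, which the paper itself cites for item (5)) that an irreducible variety of degree $D$ is cut out set-theoretically by $n+1$ polynomials of degree at most $D$. Once every object to be quantified over lives in a definable family of bounded-degree data, all five statements become bounded existential/universal assertions. The appendix instead avoids producing explicit degree bounds for irreducibility: it shows the families of closed, of reducible closed, and of irreducible closed subsets of $\mathbb{P}^n$ are each ind-definable, reduces irreducibility in arbitrary codimension to the hypersurface case by projecting from a generic point (Lemma A.1, proved by a Morley rank computation), handles hypersurfaces via definability of irreducibility of a bounded-degree polynomial, and then concludes by the complementation trick: two complementary ind-definable sets are both definable. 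Your approach buys effectivity in principle (every quantifier ranges over an explicitly bounded space, so the defining formulas could be written down) at the cost of importing the B\'ezout/Heintz machinery up front; the paper's approach is softer and shorter on the geometric side but is intrinsically non-constructive, since the compactness step gives no bound on where the ind-definable union stabilizes. Both are legitimate; the paper's introduction explicitly acknowledges your style of proof as the van den Dries/Hermann--Seidenberg alternative.

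One statement in your writeup is false as literally written and should be repaired, although the repair is immediate: it is not true that a proper closed subset of a closed set of degree $\leq D_0$ has all components of degree $\leq D_0$ (a degree-$100$ plane curve is a proper closed subset of $\mathbb{A}^2$, which has degree $1$). What you actually need, both for the soundness of the reducibility test and to dispel the circularity in the decomposition formula, is only that the witnesses $Y_1, Y_2$ to reducibility of $Y$ may be taken to be unions of irreducible components of $Y$; since the degree of a closed set is the sum of the degrees of its components, such witnesses automatically have degree at most $\deg Y \leq D_0$. With that substitution the argument goes through. I would also note that for cutting out components set-theoretically you do not need an effective Nullstellensatz --- Heintz's bound on set-theoretic defining equations suffices --- and that your proposed ultraproduct derivation of the uniform bounds, while workable, requires the van den Dries--Schmidt analysis of internal versus external polynomial rings and is more delicate than ``compactness over the Hilbert basis theorem''; citing the B\'ezout and Heintz bounds directly is simpler.
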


Fact~\ref{factdef} is established in the Appendix~\ref{mainapp}. As we 
noted in the introduction, other proofs appear in the literature. 
\subsubsection{Definability in the theory of differentially closed fields}
%

We now return to differential fields and develop results about definability with respect to the theory of differentially closed fields in this section.
In particular, we will show how far we can go for the differential analogs of results in Fact \ref{factdef}.
In the first place, we show that the differential dimension and order are definable in families.

In~\cite{GDIT}, intersections of differential varieties with 
generic differential hyperplanes were analyzed. 
The coefficients of the
defining equation of a generic differential hyperplane, $u_{0}+u_{1}x_1+\cdots+u_{n}x_n=0$,  are taken to be  differential indeterminates  over the 
differential field $K$ over which the variety is defined and various aspects of the 
geometry of the resulting intersection is established over the field $K\langle u_0,\ldots,u_n\rangle$.   
In particular, the following result was proved.

\begin{thm} \label{GIT} \cite{JBertini, GDIT} Let $V  \subseteq   \m A^n$
be an irreducible 
affine differential variety of differential dimension $d$ and order $h$. 
Let $H$ be a generic differential hyperplane defined by a linear form $u_{0}+u_{1}x_1+\cdots+u_{n}x_n$,
whose coefficients $u_{i}$ are differentially independent over $K$.
 Then over $K\langle u_0,\ldots,u_n\rangle$,
 $V \cap H$ is nonempty if and only if $d>0$.
In the case $d>0$, $V \cap H$ is  an irreducible differential variety of differential dimension $d-1$ and order $h$.
\end{thm}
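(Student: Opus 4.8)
The plan is to realise $V\cap H$ as the generic fibre of a projection from an incidence variety: non-emptiness, irreducibility and the differential dimension then follow from formal properties of such fibres, while the claim about the order requires genuine work. Form the incidence differential variety $\Gamma := \{(x,u)\in\m A^n\times\m A^{n+1} : x\in V,\ u_0+u_1x_1+\cdots+u_nx_n=0\}$. The differential morphism $(x,v_1,\dots,v_n)\mapsto(x,-\sum_{i=1}^n v_ix_i,v_1,\dots,v_n)$ is an isomorphism $V\times\m A^n\xrightarrow{\sim}\Gamma$, so $\Gamma$ is irreducible over $K$ — its differential coordinate ring is the differential polynomial ring in $n$ variables over the domain $K\{x\}/\I(V)$ — and $\dtrdeg_K K\langle\Gamma\rangle = d+n$. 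Let $p:\Gamma\to\m A^{n+1}$ be the projection to the $u$-coordinates. If $\eta$ is a generic point of $V$ over $K$ and $v_1,\dots,v_n$ are differentially independent over $K\langle\eta\rangle$, then $p$ is dominant iff $-\sum_i v_i\eta_i$ is differentially transcendental over $K\langle v_1,\dots,v_n\rangle$; choosing a differential transcendence basis among the $\eta_i$, one checks this holds precisely when $\eta$ is not differentially algebraic over $K$, i.e.\ precisely when $d>0$. When $d=0$, a nonzero differential polynomial over $K$ vanishing at some $\eta_i$ yields one in the $u$'s alone vanishing on $\Gamma$, so $p$ is not dominant and $V\cap H$ is empty over the differential function field $L:=K\langle u_0,\dots,u_n\rangle$ of $\m A^{n+1}$; this settles the case $d=0$.

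Assume now $d>0$, so $p$ is dominant. Then the coordinates of the generic point of $\overline{p(\Gamma)}=\m A^{n+1}$ are differentially independent over $K$ — so $H=\V(u_0+\sum u_ix_i)$ is a generic differential hyperplane — and $V\cap H$ over $L$ is the generic fibre of $p$. Since the generic fibre of a dominant morphism out of an irreducible differential variety is irreducible (its differential coordinate ring is a localisation of the domain $K\{x,u\}/\I(\Gamma)$), $V\cap H$ is irreducible over $L$ with differential function field $K\langle\Gamma\rangle$; and additivity of differential transcendence degree along $K\subseteq L\subseteq K\langle\Gamma\rangle$ gives $\delta$-$\dim(V\cap H)=\dtrdeg_L K\langle\Gamma\rangle = (d+n)-(n+1)=d-1$.

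It remains to show $\ord(V\cap H)=h$, and this is the crux, because the Kolchin polynomial — unlike its leading coefficient — is not additive in towers. Take $\eta$ generic in $V$ over $E:=K\langle u_1,\dots,u_n\rangle$ and set $u_0:=-\sum_i u_i\eta_i$, so $(\eta,u)$ is generic in $\Gamma$ and $L\langle V\cap H\rangle = L\langle\eta\rangle = E\langle\eta\rangle$; thus $\omega_{V\cap H}(t)=\trdeg(L(\eta^{[t]})/L)$, writing $\eta^{[t]}=(\eta_i^{(j)})_{1\le i\le n,\ j\le t}$. Since $u_0^{(j)}\in E(\eta^{[j]})$ and $u_0^{(j)}\equiv -\sum_i u_i\eta_i^{(j)}\pmod{E(\eta^{[j-1]})}$, everything reduces to the claim: \emph{for all sufficiently large $j$, $u_0^{(j)}$ is transcendental over $E(\eta^{[j-1]})$.} Granting it, $\eta$ is generic in $V$ over $E$ and $E/K$ is purely differentially transcendental, so $\trdeg(E(\eta^{[t]})/E)=d(t+1)+h$ for large $t$, while $u_0$ is differentially transcendental over $E$; bookkeeping transcendence degrees over the truncations $E(u_0,u_0',\dots,u_0^{(s)})$ — using that $u_0^{(j)}\in E(\eta^{[t]})$ for $j\le t$ and is transcendental over the rest for $j$ large — gives $\trdeg(L(\eta^{[t]})/L)=[d(t+1)+h]-(t+1)=(d-1)(t+1)+h$ for large $t$, whence $\ord(V\cap H)=h$.

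Finally, the claim. By the structure of characteristic sets of the prolongation-admissible variety $B_j(V)$ (Lemma~\ref{lem-charsetprolongationadm}), for $j$ large $E(\eta^{[j]})$ is obtained from $E(\eta^{[j-1]})$ by adjoining $d$ of the coordinates $\eta_i^{(j)}$ algebraically independently, the remaining $\eta_i^{(j)}$ being algebraic over $K(\eta^{[j-1]})$ together with those $d$ — by relations defined over $K$, not involving the $u_i$. Fix a free coordinate $\eta_{i_1}^{(j)}$, and let $\partial$ be a derivation of an algebraic extension of $E(\eta^{[j]})$ annihilating $E(\eta^{[j-1]})$ (hence every $u_i$), with $\partial(\eta_{i_1}^{(j)})=1$ and $\partial$ zero on the other free coordinates. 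Then $\partial(u_0^{(j)})=-u_{i_1}-\sum_{i\text{ non-free}}u_i\lambda_i$ with each $\lambda_i$ algebraic over $K(\eta^{[j]})$ (so free of the $u_i$), and this is nonzero because $u_1,\dots,u_n$ are algebraically independent over $K\langle\eta\rangle$. Since we are in characteristic $0$ and $\partial$ kills $E(\eta^{[j-1]})$, $\partial(u_0^{(j)})\ne 0$ forces $u_0^{(j)}$ transcendental over $E(\eta^{[j-1]})$, as needed. The main obstacle is precisely this last point: irreducibility and the differential dimension are formal consequences of the generic-fibre picture, but showing the order does not drop requires genuinely exploiting the differential-genericity of the coefficients $u_i$, i.e.\ that each new derivative $u_0^{(j)}$ of the hyperplane equation contributes a fresh transcendental.
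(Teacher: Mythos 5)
The paper does not actually prove Theorem~\ref{GIT}; it is imported wholesale from \cite{JBertini, GDIT}, so there is no in-text argument to compare against. Judged on its own, your proof is essentially correct and follows the same general strategy as the differential-algebraic proof in \cite{GDIT}: an incidence variety $\Gamma\cong V\times\m A^n$, identification of $V\cap H$ with the fibre of $p:\Gamma\to\m A^{n+1}$ over a differentially generic point (which gives emptiness for $d=0$, and irreducibility plus $\delta$-$\dim = d-1$ for $d>0$ from primality of the extended ideal $\I(\Gamma)^e$ and additivity of differential transcendence degree), and then a genuine argument for $\ord=h$ via the claim that each $u_0^{(j)}$ is a fresh transcendental over $E(\eta^{[j-1]})$. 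Your derivation argument for that claim — using the linear structure of the high-order part of a characteristic set of $\I(V)\cap K\{x\}_{\le j}$ to build $\partial$ killing $E(\eta^{[j-1]})$ with $\partial u_0^{(j)}=-u_{i_1}-\sum u_{\sigma(i)}\lambda_i\ne 0$ — is sound and correctly isolates where the differential genericity of the $u_i$ enters; this is the same mechanism (differential specializations detected by derivations) that drives the order computation in \cite{GDIT}, whereas \cite{JBertini} argues model-theoretically via Lascar rank.

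One spot deserves tightening. Your justification of dominance for $d>0$ (``choosing a differential transcendence basis among the $\eta_i$, one checks\dots'') is not a proof as written: it can happen that \emph{no} single coordinate $\eta_i$ is differentially transcendental over the remaining ones (e.g.\ $\eta=(\eta_1,\delta\eta_1)$ with $\eta_1$ differentially transcendental), so the naive coordinate-by-coordinate check does not go through, and showing $u_0=-\sum u_i\eta_i$ is differentially transcendental over $K\langle u_1,\dots,u_n\rangle$ requires an argument of its own. Fortunately your final Claim repairs this automatically: transcendence of $u_0^{(j)}$ over $E(\eta^{[j-1]})\supseteq E(u_0^{[j-1]})$ for all large $j$ rules out $u_0$ being differentially algebraic over $E$ (by the standard dichotomy for a single differential element, differential algebraicity would force $u_0^{(j)}$ algebraic over $E(u_0^{[j-1]})$ for all large $j$), and hence yields both the dominance and the exact count $\trdeg_E E(u_0,\dots,u_0^{(s)})=s+1$ needed in your bookkeeping; the Claim's proof uses only $d\ge 1$, so there is no circularity. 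I would restructure so that the Claim is proved first and dominance is deduced from it, and also spell out the passage to the limit $s\to\infty$ over the truncations $E(u_0,\dots,u_0^{(s)})$, which you currently leave implicit.
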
 

One can use Theorem~\ref{GIT} to prove the definability of dimension and order; as we have remarked above, there seem to be various other ways to prove these results.

\begin{lem} \label{semicont} Given a differentially  constructible 
family of differential varieties $(X_s)_{s \in S (\mc U)}$, with $\dim (S)=0$,
the set $\{s \in S~:~ \dim (X_s) = d \}$ is a  
differentially constructible subset of $S$ 
\end{lem}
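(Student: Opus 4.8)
The plan is to isolate, for each threshold $e\in\m N$, the set
$$S_{\geq e}:=\{\,s\in S ~:~ \dim(X_s)\geq e\,\},$$
where $\dim(X_s)$ means, as elsewhere in the paper, the differential dimension of a generic component of $X_s$. Since
$$\{\,s\in S ~:~ \dim(X_s)=d\,\}=S_{\geq d}\setminus S_{\geq d+1},$$
it suffices to show that each $S_{\geq e}$ is differentially constructible. The two tools I would use are Theorem~\ref{GIT}, to turn the inequality $\dim(X_s)\geq e$ into a nonemptiness statement about a generic differential hyperplane section, and the definability in families of the differential (Kolchin) closure in $\mathrm{DCF}_0$ (the differential analogue of Fact~\ref{factdef}(\ref{clodef}), supplied by~\cite{JOnfcp}), to make that statement uniform in $s$.

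For the first step, fix $s\in S$ and let $F$ be a small differential field containing $K\langle s\rangle$ over which the (finitely many) irreducible components of $X_s$ are defined; since $\mc U$ is saturated and $F$ is small, choose a point $\bar u=(\bar u_1,\ldots,\bar u_e)$ of $\m A^{e(n+1)}$ that is generic over $F$, and let $H_j(\bar u_j)$ be the differential hyperplane $\V(u_{j0}+u_{j1}x_1+\cdots+u_{jn}x_n)$. Iterating Theorem~\ref{GIT} along the chain of base fields $F\subseteq F\langle\bar u_1\rangle\subseteq F\langle\bar u_1,\bar u_2\rangle\subseteq\cdots$, each $H_j$ drops the differential dimension of every still-nonempty irreducible component of the running intersection by exactly one, and once a component has reached differential dimension $0$ the next hyperplane section of it is empty. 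Taking the union over the components of $X_s$, one obtains
$$X_s\cap H_1(\bar u_1)\cap\cdots\cap H_e(\bar u_e)\neq\emptyset\iff\dim(X_s)\geq e.$$

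For the second step, by quantifier elimination in $\mathrm{DCF}_0$ the set
$$D_e:=\{\,(s,\bar u)\in S\times\m A^{e(n+1)} ~:~ X_s\cap H_1(\bar u_1)\cap\cdots\cap H_e(\bar u_e)\neq\emptyset\,\}$$
is differentially constructible over $K$, and hence so is each fiber $(D_e)_s$ (over $K\langle s\rangle$). Since a point generic over $F$ lies in no proper Kolchin-closed set defined over $K\langle s\rangle$ and in every nonempty Kolchin-open set defined over $K\langle s\rangle$, the equivalence above shows that $\dim(X_s)\geq e$ holds exactly when $(D_e)_s$ is Kolchin-dense in $\m A^{e(n+1)}$, i.e. exactly when $\overline{(D_e)_s}=\m A^{e(n+1)}$. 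Invoking the definability in families of the Kolchin closure (equivalently, the uniform bound on the orders and degrees of the differential polynomials defining $\overline{(D_e)_s}$, from~\cite{JOnfcp}), there is a formula $\phi_e(z;y)$ so that $\overline{(D_e)_s}$ is defined by $\phi_e(z;s)$ for every $s\in S$. Therefore $S_{\geq e}$ is defined by $\forall z\,\phi_e(z;s)$, a formula over $K$, and the lemma follows. (The argument as given does not seem to use the hypothesis $\dim(S)=0$; that hypothesis is what one would use for an alternative proof that reduces directly to the algebraic Fact~\ref{factdef}(\ref{dimdegdef}) by replacing $S$ and the family $(X_s)$ by sufficiently high prolongations.)

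The main obstacle is the second step: everything there is routine once one has the (non-elementary) uniform bounds on the differential polynomials cutting out $\overline{(D_e)_s}$ as $s$ ranges over $S$, which is precisely the differential-algebraic phenomenon handled by~\cite{JOnfcp}. A secondary point needing care is that the genericity in the first step is taken over the moving field $F\supseteq K\langle s\rangle$ whereas the output $S_{\geq e}$ must be $K$-definable; this is reconciled by observing that $\overline{(D_e)_s}$, and the dense Kolchin-open subset of $(D_e)_s$ when $(D_e)_s$ is dense, are already defined over $K\langle s\rangle$, so a point generic over $F$ correctly detects density of $(D_e)_s$.
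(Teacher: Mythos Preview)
Your second step has a genuine gap: you invoke ``the definability in families of the Kolchin closure in $\mathrm{DCF}_0$, supplied by~\cite{JOnfcp},'' but that is not what~\cite{JOnfcp} provides. The results of~\cite{JOnfcp} give uniform bounds for the \emph{Zariski} (algebraic) closure of a definable family in $\mathrm{DCF}_0$; whether the \emph{Kolchin} closure is definable in families is precisely (equivalent to) the Ritt problem, which the paper explicitly flags as open in the subsection on the Ritt problem. So the formula $\phi_e(z;y)$ you produce for $\overline{(D_e)_s}$ is not known to exist, and the argument breaks at exactly the point where you try to make ``$(D_e)_s$ is Kolchin-dense in $\m A^{e(n+1)}$'' into a first-order condition on $s$.

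The paper's proof avoids this entirely by exploiting the hypothesis $\dim(S)=0$ that you set aside. Because every $s\in S$ has differential transcendence degree $0$ over $K$, a tuple $\bar c$ chosen once to be differentially independent over $K$ is automatically differentially independent over $K\langle s\rangle$ for \emph{every} $s$. Hence one fixes a single system of $d+1$ generic hyperplanes over $K$, and Theorem~\ref{GIT} gives, uniformly in $s$, that $\dim(X_s)=d$ iff the intersection of $X_s$ with the first $d$ of them is nonempty and with all $d+1$ is empty---a visibly differentially constructible condition. (For the general case without $\dim(S)=0$, the paper uses a different device in Lemma~\ref{ddimdef}: choose $2n_1+1$ independent systems and vote, so that for any $s$ a majority of the systems are generic over $K\langle s\rangle$. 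The Remark following that lemma also notes that density of a constructible set in affine space can be tested definably via Poizat-genericity for the additive group, which would repair your second step without appealing to Kolchin-closure definability.)
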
 
\begin{proof} 
Fix $d+1$   
tuples $(c_{i,j})_{1 \leq i \leq d+1 , 0 \leq j \leq n}$  of length $n+1$ such that all these $c_{i,j}$ are differentially independent over $K$.
  Then by Theorem~\ref{GIT}, for $s \in S$,
    $ \dim (X_s) =d$ is equivalent to the condition
$$X_s \cap  \mathbb V \big(\{ c_{i,0}+ \sum_{j=1}^n c_{i,j} y_j \}_{i=1}^d\big) \neq \emptyset \text{ and } X_s \cap  
\mathbb V \big(\{ c_{i,0} +\sum_{j=1}^n c_{i,j} y_j \}_{i=1}^{d+1}\big) =\emptyset. $$
\end{proof} 

One should note that Theorem \ref{GIT} applies in this case only because over any base
 of $S$, we know that any point on $S$ is of differential transcendence degree $0$. So, 
choosing some collection of independent differential transcendental elements over the 
base of all of the definable sets,  the collection is independent and 
differentially transcendental  over any given point in $S$.

\begin{lem}\footnote{The proof presented here is similar to the technique used in~\cite[Section 8.6]{freitag2012model}.} \label{ddimdef} Differential dimension is definable
 in families. That is, given a differentially constructible
 family of differential varieties $(X_s)_{s \in S}$ and a number 
 $d \in \m N$ the 
 set $\{s \in S:~ \dim (X_s) = d \} $ is a 
differentially constructible subset of $S$. 
\end{lem}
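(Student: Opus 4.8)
The previous lemma, Lemma~\ref{semicont}, already handles the case $\dim(S)=0$, so the strategy is to reduce the general case to that one by fibering $S$ over a zero-dimensional base. First I would invoke the fact that for a differentially constructible set $S$, there is a finite partition of $S$ into differentially constructible pieces on each of which the differential dimension function behaves generically; more usefully, I would work over a suitable extension of the base field and decompose $S$ according to the differential transcendence degree of its generic points. The key observation is that if $\delta\text{-}\dim(S) = e$, then by choosing $e$ sufficiently generic differentially transcendental elements $t_1, \ldots, t_e$ over $K$ and intersecting $S$ with generic differential hyperplanes in its own coordinates (exactly as in Lemma~\ref{semicont}, but now applied to $S$ rather than to the fibers), one can slice $S$ down to a zero-dimensional differentially constructible family $S'$ over the extended base $K' = K\langle t_1, \ldots, t_e\rangle$, in such a way that the fibers $X_s$ for $s \in S'$ still range over (generic points of) all the fibers we care about.

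\textbf{Carrying it out.} Concretely, I would argue as follows. It suffices to show that for each fixed $d$ the set $\{s \in S : \dim(X_s) = d\}$ is differentially constructible; and since differentially constructible sets over $K$ form a Boolean algebra closed under the relevant operations, it is enough to prove the statement after base change to $K'$ and after replacing $S$ by each piece of a finite differentially constructible stratification. Stratify $S$ by differential dimension of the strata themselves (this is possible because the differential dimension of a differentially constructible set is computable from a characteristic set, and the stratification can be taken differentially constructible — one peels off the generic component iteratively, using that lower-dimensional differentially constructible sets are again differentially constructible). On a stratum $S_0$ of differential dimension $e$, pick tuples $(c_{i,j})_{1 \le i \le e,\, 0 \le j \le N}$ (where the $y$-coordinates of $S$ have length $N+1$) that are differentially independent over $K$, and consider
$$S_0' = S_0 \cap \mathbb{V}\big(\{ c_{i,0} + \textstyle\sum_{j=1}^{N} c_{i,j} y_j \}_{i=1}^{e}\big).$$
By Theorem~\ref{GIT} applied to (generic components of) $S_0$, the set $S_0'$ is a differentially constructible family over $K\langle c_{i,j}\rangle$ of differential dimension $0$, and moreover the fibers $X_s$ for $s \in S_0'$ include generic fibers of the original family restricted to $S_0$ — more precisely, the truth of "$\dim(X_s) = d$" is constant on the generic locus and is detected by $S_0'$. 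Then Lemma~\ref{semicont} applies verbatim to $S_0'$ to give that $\{s \in S_0' : \dim(X_s) = d\}$ is differentially constructible, and one pulls this back to $S_0$ and descends from $K\langle c_{i,j}\rangle$ to $K$ by a standard coefficient-elimination argument (the defining formula has parameters that are generic differential transcendentals, which can be quantified away since DCF$_0$ eliminates quantifiers and the truth value is independent of the generic choice).

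\textbf{The main obstacle.} The delicate point is the bookkeeping in the reduction: one must be certain that slicing $S$ by generic differential hyperplanes in the $y$-coordinates does not change the differential dimension of the relevant fibers $X_s$ — i.e., that the family $(X_s)_{s \in S_0'}$ genuinely captures the dimension function of $(X_s)_{s \in S_0}$ on a dense (hence, by constructibility, cofinite-up-to-lower-dimension) subset, and that the "generic" choice of the $c_{i,j}$ is uniform enough that the resulting formula does not secretly depend on them. This is exactly the subtlety flagged in the remark following Lemma~\ref{semicont} — Theorem~\ref{GIT} is available only because points of a zero-dimensional $S$ have differential transcendence degree $0$ over the base, so the same collection of generic transcendentals works simultaneously for every point. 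Handling the passage from $e > 0$ down to $e = 0$ correctly, and verifying that the $X_s$-dimension is read off faithfully along the way, is where the real work lies; the rest is routine manipulation of differentially constructible sets and quantifier elimination in DCF$_0$.
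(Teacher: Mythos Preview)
Your reduction has a genuine gap in the ``pull back to $S_0$'' step. Slicing $S_0$ by $e$ generic hyperplanes gives you a zero-dimensional $S_0'(c)$ over $K\langle c\rangle$, and Lemma~\ref{semicont} then tells you that $\{s\in S_0'(c):\dim(X_s)=d\}$ is constructible. But this is information about a \emph{proper, lower-dimensional} subset of $S_0$; it says nothing about $s\in S_0\setminus S_0'(c)$. Your descent argument does not repair this: the formula you produce has the shape $\phi(s,c)\equiv\bigl(s\in S_0'(c)\bigr)\wedge(\text{dimension test})$, whose extension genuinely varies with $c$ (different generic $c$ carve out different slices), so the hypothesis ``truth value is independent of the generic choice'' fails and you cannot quantify $c$ away. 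Nor can a fixed $s_0\in S_0$ lie on $S_0'(c)$ for $c$ generic over $K\langle s_0\rangle$, since the incidence condition forces $e$ algebraic relations among the $c_{i,0}$. A Noetherian induction does not obviously save you either: $S_0\setminus S_0'(c)$ is open and full-dimensional in $S_0$, not a lower-dimensional stratum.

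The paper sidesteps the whole issue by never slicing $S$ at all. Instead, with $\dim(S)=n_1$, it chooses $2n_1+1$ mutually independent systems of hyperplane coefficients $\{c_{k,i,j}\}_{k=1}^{2n_1+1}$ for testing the \emph{fibers}. For any fixed $s\in S$, the differential transcendence degree of $s$ over $K$ is at most $n_1$, so at most $n_1$ of the $2n_1+1$ systems can fail to be generic over $K\langle s\rangle$; hence at least $n_1+1$ of them are genuinely generic over $K\langle s\rangle$ and give the correct answer via Theorem~\ref{GIT}. The condition $\dim(X_s)\geq d$ is then equivalent to ``for at least $n_1+1$ values of $k$, the $k$-th system of $d$ hyperplanes meets $X_s$'', which is a first-order condition uniform in $s$. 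This majority-vote trick is exactly what your approach is missing.
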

 
\begin{proof}
Adopt the notation of Lemma~\ref{semicont}.  
Suppose that $\dim(S) = n_1$.
Then pick $2n_1+1$ systems of $d(n+1)$-tuples of mutually independent 
differential transcendentals 
(equivalently, in model theoretic terms, fix an indiscernible set in the generic type, over $K$; 
then pick any $(2n_1+1)d(n+1)$ elements from this set). 
Denote the chosen elements $$\{c_{k,i,j} ~:~
 1 \leq k \leq 2 n_1+1 , \, 1 \leq i \leq d, \, 0 \leq j \leq n \}$$ 
 Of course, for any $s\in S$, over $\mathbb Q\langle s\rangle$, 
 some of the $2n_1+1$ systems do not give generic independent sets of hyperplanes. 
 But, because $\dim(S)=n_1$ and the systems are mutually independent, at least 
 $n_1+1$ of the systems are generic over  
$\mathbb{Q}\langle s\rangle$ for any given $s\in S$.

Now, the requirement that 
$\dim(X_s) \geq d$ is equivalent to the condition that for at least 
$n_1+1$ values of the $k$,  
$$X_s\cap \m V(c_{k,1,0}+\sum_{j=1}^n c_{k,1,j} y_j , \ldots , c_{k,d,0}+\sum_{j=1}^n c_{k,d,j} y_j) \neq \emptyset \text{ .}$$	
\end{proof}

\begin{rem}
Lemma~\ref{ddimdef} admits another proof.   A differential constructible set 
$X \subseteq {\mathbb A}^n$ has differential dimension at least $d$ just in case there
is some coordinate projection $\pi:{\mathbb A}^n \to {\mathbb A}^d$ for which 
$\pi(X)$ is Kolchin dense in ${\mathbb A}^d$.  In general, we do not know 
how to test definably whether some constructible set is Kolchin dense in another
differential variety, but in the case of affine $d$-space, a constructible set 
is dense if and only if it is generic in Poizat's sense for the additive group structure, 
that is, finitely many additive translates of $\pi(X)$ cover ${\mathbb A}^d$.   
An easy Lascar rank computation shows that $d+1$ translates would suffice. 
\end{rem}

The order of a family of zero-dimensional differential varieties is definable in 
families \cite[Appendix A.1]{HrIt}. The general result follows by 
reducing to this case via an argument similar to the proof of Lemma~\ref{ddimdef}; 
See~\cite[section 8.6]{freitag2012model} for complete details. 

\begin{lem}\label{dorderdef} The order of a definable set is definable in families.
\end{lem}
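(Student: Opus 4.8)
The plan is to reduce the general case to the zero-dimensional case, for which definability of order is already known by \cite[Appendix A.1]{HrIt}, using the same slicing technique employed in the proof of Lemma~\ref{ddimdef}. Recall that by Lemma~\ref{def-order-sadik} and Theorem~\ref{GIT}, if $V \subseteq \m A^n$ is an irreducible differential variety of differential dimension $d$ and order $h$, then intersecting $V$ with a generic differential hyperplane cuts the differential dimension down by one while preserving the order; iterating $d$ times produces a zero-dimensional differential variety of order $h$. So the order of $V$ equals the order of $V \cap \m V(L_1, \ldots, L_d)$ where the $L_i$ are sufficiently generic differential linear forms.

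The first step is to set up the family. Given a differentially constructible family $(X_s)_{s \in S}$ with $\dim(S) = n_1$, I would first apply Lemma~\ref{ddimdef} to partition $S$ into finitely many differentially constructible pieces on each of which $\dim(X_s)$ takes a constant value $d$; it suffices to handle one such piece. Next, exactly as in the proof of Lemma~\ref{ddimdef}, I would choose $2n_1+1$ systems of $d(n+1)$-tuples of mutually differentially independent transcendentals over $K$, call them $c_{k,i,j}$, so that for each $s \in S$ at least $n_1+1$ of these systems are generic over $\Qq\langle s\rangle$; for such a $k$, the sliced family $Y^{(k)}_s := X_s \cap \m V\big(\{c_{k,i,0} + \sum_{j=1}^n c_{k,i,j} y_j\}_{i=1}^d\big)$ is a differentially constructible family of zero-dimensional differential varieties (over the expanded base including the $c_{k,i,j}$), and by Theorem~\ref{GIT} applied $d$ times, a generic component of $Y^{(k)}_s$ has order equal to the order of a generic component of $X_s$, which is by convention $\ord(X_s)$.

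The second step is to invoke the zero-dimensional case: by \cite[Appendix A.1]{HrIt}, the order of a family of zero-dimensional differential varieties is definable in families, so for each fixed $k$ there is a differential formula $\phi_k(h; s)$ expressing that $\ord(Y^{(k)}_s) = h$. The desired formula for $S$ is then ``for at least $n_1+1$ values of $k \in \{1, \ldots, 2n_1+1\}$, $\phi_k(h;s)$ holds'' — a finite disjunction over the choices of which $n_1+1$ indices are the generic ones, hence differentially constructible. One must check the small compatibility point that whenever the $k$-th system \emph{is} generic over $\Qq\langle s\rangle$, the value of $\ord(Y^{(k)}_s)$ is genuinely $\ord(X_s)$ (independent of $k$), which is exactly what Theorem~\ref{GIT} guarantees, while for the non-generic $k$ the value $\phi_k$ computes may be wrong — but since at least $n_1+1$ of the $2n_1+1$ systems are good and at most $n_1$ are bad, the majority vote recovers the correct answer. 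I expect the main obstacle to be purely bookkeeping: justifying rigorously that slicing by $d$ generic hyperplanes lands in the setting of \cite{HrIt} uniformly in $s$ (i.e., that zero-dimensionality and the order computation hold simultaneously for all $s$ in the piece once the $c_{k,i,j}$ are independent over the base of the whole family), which follows from the remark after Lemma~\ref{semicont} that a fixed generic-over-$K$ independent set remains generic over any single point of the zero-dimensional parameter slices — but here $S$ need not be zero-dimensional, which is precisely why one needs the $2n_1+1$ systems and the majority argument rather than a single system. Since this is entirely parallel to Lemma~\ref{ddimdef}, I would keep the proof brief, citing \cite[section 8.6]{freitag2012model} for the full details as the statement already does.
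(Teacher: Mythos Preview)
Your proposal is correct and follows essentially the same approach as the paper: reduce to the zero-dimensional case via the $2n_1+1$-systems majority-vote trick from Lemma~\ref{ddimdef}, then invoke \cite[Appendix A.1]{HrIt} for the zero-dimensional case. The paper additionally spells out the zero-dimensional step by rewriting $X$ as a $D$-variety in $\m A^{n(h+1)}$ so that the order becomes the algebraic dimension of the underlying variety, but this is just an explicit unpacking of the same citation you use.
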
 
\begin{proof} 
	The proof is similar to the argument given in \cite[Theorem 8.6.3]{freitag2012model}. If $(X_s)_{s \in S}$ is a family of differential varieties, and $S$ has differential dimension $n_1$, then picking $2n_1 +1$ many systems of $(d+1)(n+1)$-tuples of independent differential transcendentals $\{(a_{i,j}) \, | \, i =1, \ldots , 2n_1+1, \, j =1, \ldots , (d+1)(n+1)\}$ , then for any $s \in S$, at least $n_1+1$ of $\{(a_{k,j}) \, | \,  j =1, \ldots , (d+1)(n+1)\}$ is a collection of independent differential transcendentals over $s$. Any such collection of tuples determins $d+1$-many independent generic hyperplanes in $\m A^n$ over $s$. Thus by the main theorem of \cite{JBertini}, the order of the intersection of $X_s$ with this system of hyperplanes is equal to the order of $X_s$. So, the order of $X_s$ is equal to the order given by the intersection of $X_s$ with the $d+1$ many hyperplanes for at least $n_1+1$ many choices of the hyperplane system. So, we are reduced to showing that the order of a family of zero dimensional differential varieties is definable in families.
	
	Given a zero-dimensional differential variety $X \subset \m A^n$ defined by a collection of differential equations of order bounded by $h$, such that the zero set has differential dimension zero, we may write $X$ as a $D$-variety in the sense of \cite[Proposition 1.1]{HrIt} of $\m A^{n(h+1)}$, and the order of $X$ is given by the transcendence degree of the underlying algebraic variety.  Algebraic dimension of algebraic varieties is definable in families, and thus so is the order. 
	\end{proof}

In section \ref{sec-prolongation}, prolongation admissible varieties are defined and important properties are developed.
This special class of varieties plays an important role in Section \ref{deltaChow-1}.
The next lemma shows that prolongation admissible is a definable condition, which  follows from Fact \ref{factdef} and the definition of prolongation admissible. 

\begin{lem} \label{prolongadmissible-definable}  
Prolongation admissibility is definable in families. 
\end{lem}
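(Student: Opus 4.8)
The plan is to unwind the definition of prolongation admissibility and show that each ingredient is a constructible condition that can be tracked uniformly across a family. Recall that an algebraic variety $V$ is prolongation admissible if and only if every irreducible component $V_i$ of $V$ satisfies $\rho_{\ell}(V_i) \subseteq \tau_1(\rho_{\ell-1}(\pi_{\ell,\ell-1}(V_i)))$, where $\ell$ is determined by which $\tau_\ell \m A^n$ the variety sits inside. So I would start with a differentially (in fact, algebraically) constructible family $\{X_a\}_{a \in B}$ of subvarieties of $\tau_\ell \m A^n = \m A^{n(\ell+1)}$. By Fact~\ref{factdef}\eqref{compmap}, the family of irreducible components of $X_a$ is definable in families: there is a bound $N$ on the number of components and a formula presenting the components of $X_a$ as a sequence $(X_{a,1},\ldots,X_{a,N_a})$ of subvarieties (with repetitions allowed if $N_a < N$). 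It therefore suffices to check prolongation admissibility of each component, so I may reduce to the case where, uniformly in $a$, $X_a$ is irreducible — the condition ``$X_a$ is irreducible'' being definable by Fact~\ref{factdef}\eqref{irrdef}.

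Next, for an irreducible $X_a \subseteq \tau_\ell \m A^n$, the maps $\pi_{\ell,\ell-1}$, $\rho_{\ell-1}$, $\rho_\ell$ and the construction $\tau_1(-)$ are all \emph{fixed} polynomial morphisms and functorial constructions that do not depend on $a$; only the variety $X_a$ varies. So $\pi_{\ell,\ell-1}(X_a)$ is the image of a constructible set under a fixed morphism, hence constructible, and its Zariski closure $\overline{\pi_{\ell,\ell-1}(X_a)}$ is definable in families by Fact~\ref{factdef}\eqref{clodef}. Applying the fixed embedding $\rho_{\ell-1}$ and then the fixed functor $\tau_1$ to this closure produces another family of Zariski closed sets whose defining equations have degree bounded uniformly in $a$ (since $\tau_1$ of a variety cut out by polynomials $f$ is cut out by the $f$'s together with their formal total derivatives $\delta f$, an operation that does not increase degree in a way depending on $a$), so $\tau_1(\rho_{\ell-1}(\overline{\pi_{\ell,\ell-1}(X_a)}))$ is a definable family of closed sets. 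Likewise $\rho_\ell(X_a)$ is the image of $X_a$ under a fixed closed embedding, hence a definable family of closed sets.

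The last step is to observe that containment of one definable family of closed sets in another is a definable condition. Concretely, writing $Y_a := \rho_\ell(X_a)$ and $Z_a := \tau_1(\rho_{\ell-1}(\overline{\pi_{\ell,\ell-1}(X_a)}))$, the inclusion $Y_a \subseteq Z_a$ holds if and only if every defining polynomial of $Z_a$ vanishes on $Y_a$; since both families come with uniformly bounded-degree defining equations, this is expressed by a single first-order formula in $a$ (either directly, by quantifying over points of $Y_a$ and asserting the $Z_a$-equations hold, or via the closure operation: $Y_a \subseteq Z_a$ iff $\overline{Y_a \cap Z_a} = \overline{Y_a}$, and equality of definable closed sets is definable by Fact~\ref{factdef}\eqref{clodef}). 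Conjoining over the (boundedly many) components and recalling the reduction above yields the desired formula $\phi(a)$ cutting out $\{a \in B : X_a \text{ is prolongation admissible}\}$.

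I expect the only real subtlety — and hence the ``main obstacle,'' though it is a mild one — to be bookkeeping around the component decomposition: one must be careful that applying $\pi_{\ell,\ell-1}$, $\rho$'s and $\tau_1$ to each component \emph{separately} is what the definition of prolongation admissibility for reducible varieties demands (it is, by the definition in Remark~\ref{rk-pro-admissible}), and that the uniform bound $N$ on the number of components together with the definability of the component map (Fact~\ref{factdef}\eqref{compmap}) genuinely lets one quantify over components inside a first-order formula. Everything else is a routine composition of the closure, dimension, and component operations shown to be definable in families in Fact~\ref{factdef}, together with the fact that the prolongation functors and the maps $\rho$, $\pi$ are fixed and independent of the parameter.
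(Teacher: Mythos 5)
Your overall strategy is a legitimate alternative to the paper's: you verify the defining inclusion $\rho_\ell(V)\subseteq\tau_1(\rho_{\ell-1}(\pi_{\ell,\ell-1}(V)))$ component by component inside ACF, whereas the paper instead uses the equivalent characterization of Remark~\ref{rk-pro-admissible} (density of the differential points) and shows that $b\mapsto B_h(V_b)$, the Zariski closure of the set of differential points, is a definable family --- the technical input being a degree bound for Zariski closures of differentially definable sets from~\cite{HPnfcp} together with \cite[Proposition 3]{Heintz}, after which one tests $V_b=B_h(V_b)$. Your route stays entirely within ACF and Fact~\ref{factdef}, and the reduction to irreducible components via Fact~\ref{factdef}(\ref{compmap}) is exactly what the definition of prolongation admissibility for reducible varieties requires.

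There is, however, a genuine gap at the $\tau_1$ step. The functor $\tau_1$ is defined on a variety $W$ through its full ideal $I(W)$: $\tau_1 W$ is cut out by $\{f,\delta f: f\in I(W)\}$. It is not true that ``$\tau_1$ of a variety cut out by polynomials $f$ is cut out by the $f$'s together with their formal total derivatives'': for instance $V(x^2)=V(x)=\{0\}\subseteq\mathbb{A}^1$, yet $V(x^2,2xx')=\{0\}\times\mathbb{A}^1$ while $\tau_1(\{0\})=V(x,x')=\{(0,0)\}$. Replacing $I(W)$ by a proper subideal that merely cuts out $W$ set-theoretically only enlarges the right-hand side of the inclusion you are testing, so your formula would be a strictly weaker condition and could certify non-admissible varieties as admissible. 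To repair the argument you must produce, uniformly in $a$, generators of bounded degree for the radical ideal $I\bigl(\rho_{\ell-1}(\overline{\pi_{\ell,\ell-1}(X_a)})\bigr)$; this is a classical uniformity statement (of Hermann--Seidenberg--van den Dries type), but it is not among the items of Fact~\ref{factdef}, and it is precisely the kind of bound the paper's proof avoids by bounding $\deg B_h(V_b)$ directly. Once that ingredient is supplied, the rest goes through: the set of $f$ such that $f$ and $\delta f$ both vanish on $\rho_\ell(X_a)$ is an ideal, so it suffices to check the condition on the bounded-degree generators, and containment of definable families of closed sets is indeed a definable condition.
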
 
\proof  Let $(V_{{b}})_{{b}\in B}$  be a definable family of algebraic varieties in $\tau_h\mathbb{A}^n$ with $V_{b}$ defined by
$f_i({b},x,x',\ldots,x^{(h)})=0, \,i=1,\ldots,m$, where $x^{(i)}=(x_1^{(i)},\ldots,x_n^{(i)})$. 
By abuse of notation, let $B_h(V_b)$  be the Zariski closure of $\{\nabla_h(\bar{a}): \nabla_h(\bar{a})\in V_b\}$ in $\tau_h\mathbb{A}^n$.  
Then $\deg(B_h(V_b))$ has a uniform bound $T$ in terms of the degree bound $D$ of $f_i$, $m$, $n$ and $h$.
Indeed, let $z_{ij}\,(i=1,\ldots,n; j=0,\ldots,h)$ be new differential variables and replace $x_i^{(j)}$ by $z_{ij}$ in each $f_i$
to get a new differential polynomial   $g_{i}$.
 Consider the new differential  system $S:=\{ g_1,\ldots,g_m, \delta(z_{i,j-1})-z_{ij}:j=1,\ldots,h\}$.
 Regard $S$ as a pure algebraic polynomial system in $z_{ij}$ and $\delta(z_{ij})$ temporarily, and let $U$ be the 
 Zariski closed set  defined by $S$ in $\tau(\tau_{h}\mathbb{A}^n)$.
 Let $Z=\{\bar{c}=(c_{10},\ldots,c_{n0},\ldots,c_{1h},\ldots,c_{nh})\in\tau_h\mathbb{A}^n:  (\bar{c},\delta(\bar{c}))\in U\}$.
  Clearly,  $Z=\{\nabla_h(\bar{a}): \nabla_h(\bar{a})\in V_b\}$.
By \cite[Remark 3.2]{HPnfcp}, the degree of the Zariski closure of $Z$, namely $B_h(V_b)$, is bounded by  $D_1=D^{m(2^{n(h+1)}-1)}$.

By \cite[Proposition 3]{Heintz}, an irreducible algebraic variety $V$ can be defined by $n(h+1)+1$ polynomials of degree bounded by the degree of $V$. So $B_h(V_b)$ can be defined by at most $(n(h+1)+1)^{D_1}$ polynomials of degree bounded by $D_1^2$.
Hence, $(B_h(V_b))_{b\in B}$ is a definable family.
Recall that $V_b$ is prolongation admissible if and only if $V_b=B_h(V_b)$,
which implies that $\{b: V_b \,\text{is  prolongation admissible}\}$ is a definable set.
Thus,  prolongation admissibility is definable in families. 
\qed

By Lemma \ref{prolongadmissible-definable} and Fact \ref{factdef}, we are safe to talk about definable families of irreducible prolongation admissible varieties.
 Given a definable family $\{ X_a \} _ {a \in B}$ of irreducible prolongation admissible varieties in $\tau_{h}\mathbb A^n$, 
 as illustrated in Remark  \ref{rem-uniquegenericomponent}, the Kolchin polynomial of the dominant component of the differential variety corresponding to the prolongation sequence generated by $X_a$ is determined by the dimensions of $X_a$ and $\pi_{h,h-1}(X_a)$. 
So the order and the differential dimension of the dominant component of the  corresponding differential variety are definable in families.
This fact will be used in Section \ref{deltaChow-1}.

\subsubsection{The Ritt problem}
To establish the differential analog of Fact \ref{factdef}, it is natural to ask whether the Kolchin closure and 
irreducibility of \emph{differential} varieties are definable in families.
However, neither of these are known to be 
definable in families. This essentially comes down 
to the fact that it might not be possible to bound the orders of the differential 
polynomials which witness the non-primality of the differential ideal 
only from geometric datas. 
Developing such a bound is equivalent to several problems considered by 
Ritt~\cite[for instance, see the statement of Theorem 5.7 along with 
the references in the 
following remark]{MoosaTrainor}, and we will refer to the development of such a bound as 
the \emph{Ritt problem}. 

Characteristic sets are \emph{an} answer to this problem; various properties 
become definable in families of characteristic sets. The difficulties associated with the Ritt 
problem are the reason that our approach in this paper uses both prolongation sequences and characteristic sets. 

The drawback of characteristic sets is that for points $p$ such that the product of the separants of a given characteristic set vanish at $p$, determining if $p$ is in the differential variety with a given characteristic set is an open problem~\cite[see the discussion in the appendices beginning on page 4286]{HrIt}. In this paper, we will parameterize characteristic sets of certain differential cycles rather than parameterizing generators of differential ideals. One might seek a more direct parameterization by generators of differential ideals, but doing so while following our general strategy would, at least on the surface, seem to require a solution to the Ritt problem. 

Here is a specific indication of the problems that can arise when working directly with the generating sets of differential ideals; the following example shows that the order, $h$, will not suffice for the sort of bound described in the previous paragraphs. 

\begin{exam}~\cite{HrIt}
Let $V = \V( 2x^{(1)} x^{(3)} - (x^{(2)})^2 - 2x)$. 
Differentiating the defining equation results in the equation 
$2x^{(1)} ( x^{(4)} -1) =0$. From this, it is easy to see that $V$ consists of 
two components, $x=0$ and the generic component.
\end{exam} 

Of course, more differentiations might be necessary:

\begin{exam}~\cite{Ritt}
Consider $V=\V(f)$ where $f=(y^{(2)})^2-y\in K\{y\}$.  
Differentiating $f$ successively $3$ times, one obtains \begin{eqnarray*} 
\delta f & = & 2y^{(2)}y^{(3)}-y^{(1)}  \\ \delta^2 f &=&2y^{(2)} y^{(4)}+
2(y^{(3)})^2-y^{(2)} \\
\delta^3 f &=&2y^{(2)} y^{(5)}+6y^{(3)}y^{(4)}-y^{(3)} \end{eqnarray*}

Then $2y^{(3)}\cdot \delta^3 f -(6y^{(4)}-1)f^{(2)} =y^{(2)} (4y^{(3)}y^{(5)}-12(y^{(4)})^2+8y^{(4)}-1)\in[f]$.
Thus, $V=\V(f, y^{(2)})\cup \V(f,4y^{(3)}y^{(5)}-12(y^{(4)})^2+8y^{(4)}-1)$ is reducible.
\end{exam}

Informally, the Ritt problem asks if there is an upper bound to the 
number of required differentiations
in terms of the ``shape" of the equations. An equivalent form of the 
Ritt problem~\cite[Appendix 1]{HrIt} is testing when a given point (say $0$) 
at which the separants of the characteristic set vanish is in the 
generic component of the differential ideal generated by the characteristic set.

\begin{rem}
Note that although many of the arguments and bounds in this paper are theoretical,  most of them could be made effective (for instance, many of the definability arguments could be made effective using differential elimination algorithms). 
\end{rem}
%
%
%

\section{Algebraic Chow forms and differential Chow forms} \label{Chowsection}
In this section we recall  the definitions of Chow forms, Chow varieties, and their 
differential algebraic analogs. The algebraic 
Chow form was first defined for  projective varieties by Chow~\cite{Chow}.
When $\sum_{i=0}^n c_i y_i$ is a linear form in $y_0,\ldots,y_n$ with coefficients $\{c_i\}_{i=0}^n$ a tuple of
independent transcendentals, we call the form algebraically generic, and we call the zero 
set of such a form a generic hyperplane.

\begin{defn}~\cite{Chow,Hodgevol2}  
Let $V  \subseteq   \m P^n$ be an irreducible projective variety of dimension $d$.
Take $d$ independent generic linear forms
$L_i=v_{i0}y_0+\cdots+v_{in}y_n$ for $1 \leq i \leq d)$, then 
$V$ intersects $V(L_1, \ldots, L_d)$ in a finite set of points,  say $(\xi_{\tau0},\ldots,\xi_{\tau n})\,(\tau=1,\ldots,m)$.
Then there exists a polynomial $A\in K[\bv_1,\ldots,\bv_d]$ such that $F(\bv_0,\ldots,\bv_d)=A\prod_{\tau=1}^m(\sum_{j=0}^nv_{0 j}\xi_{\tau j})$ is an irreducible polynomial in $K[\bv_0,\ldots,\bv_d]$ where $\bv_i=(v_{i0},v_{i1},\ldots,v_{in})$.
This $F$ is called the \emph{algebraic Chow form} of $V$. 
\end{defn} 

The Chow form $F$ is homogeneous in each $\bv_i$ of degree $m$. 
We call $m$ the \emph{degree} of $V$, denoted by $\deg(V)$. Throughout the 
remainder of this paper, unless otherwise indicated, varieties and differential 
varieties are affine. We  now introduce the concept of algebraic Chow form for 
irreducible varieties in $\m A^n$.

\begin{defn}
Let $V \subseteq  \mathbb{A}^n$ be an irreducible affine  variety of dimension $d$.
Let $V' \subseteq  \mathbb{P}^n$ be the projective closure of $V$ with respect to the usual inclusion of $\mathbb A^n$ in $\mathbb P^n$ (identifying $(a_1,\ldots,a_n)\in {\mathbb A}^n$ with $[1:a_1:\ldots:a_n]\in \mathbb  P^n$). 
We define the algebraic Chow form of $V$ to be the algebraic Chow form of $V'$. 
\end{defn}

An (effective)  \emph{algebraic cycle} in $\m A^n$ of dimension $d$ over $K$ is of the form $V=\sum_{i=1}^\ell t_ i V_ i\,(t_i\in\m Z_{\geq0})$ where each $V_i$ is an irreducible variety of dimension $d$ in $\m A^n$. 
We define the algebraic Chow form of $V$ to be $F(\bv_0,\ldots,\bv_d)=\prod_{i=1}^\ell (F_ i(\bv_0,\ldots,\bv_d))^{t_ i}$ where $F_i$ is the algebraic Chow form of $V_i$,
 and define the degree of $V$  to be $\sum_{i=1}^\ell t_i\deg(V_i)$, which is the homogenous degree of $F$ in each $\bv_i$.
The coefficient vector of $F$, regarded as a  point in a projective space,  is correspondingly called the \emph{Chow coordinate} of $V$.
Each algebraic cycle  is uniquely determined by its algebraic Chow form, in other words, determined by its Chow coordinate. 

In~\cite{Chow}, Chow proved that the set of all algebraic cycles in $\P ^n$ of  dimension $d$ and degree $m$  in the Chow coordinate space is a projective variety, 
now called the Chow variety of index $(d,m)$. 
In general, the set of all algebraic cycles in $\m A^n$ of  dimension $d$ and degree $m$ is not closed   in the Chow coordinate space. 
Below, we give a simple example.

\begin{exam}
Consider the set $X$ of all algebraic cycles in $\m A^2$ of dimension $0$ and degree $1$.
Each $V\in X$ can be represented by two linear equations $a_{i0}+a_{i1}y_1+a_{i2}y_2=0\,(i=0,1)$ with $a_{01}a_{12}-a_{02}a_{11}\neq0$.
Then the Chow form of $V$ is $F(v_{00},v_{01},v_{02})=(a_{01}a_{12}-a_{02}a_{11})v_{00}-(a_{00}a_{12}-a_{02}a_{10})v_{01}+(a_{00}a_{11}-a_{01}a_{10})v_{02}$. So the Chow coordinate of $V$ is $(a_{01}a_{12}-a_{02}a_{11},-a_{00}a_{12}+a_{02}a_{10}, a_{00}a_{11}-a_{01}a_{10})$.
Thus, the Chow coordinates of cycles in $X$ is the set $\{(c_0,c_1,c_2): c_0\neq0\}=\P^2\setminus V(c_0)$, which is not a closed variety, but is a
constructible set.
\end{exam}

The following result  shows that the set of all cycles with given degree and dimension  is always a constructible set in the  Chow coordinate space.
 \begin{prop} \label{prop-chowconstructible}
The set of all algebraic cycles in $\m A^n$  of dimension $d$ and degree $m$ is a constructible set
in a higher dimensional projective space.  We call this set the {\em affine Chow variety} of index $(d,m)$ in $\mathbb{A}^n$,
denoted by $\chow_n(d,m)$, or $\chow(d,m)$  if the space  $\m A^n$ is clear from the context.
\end{prop}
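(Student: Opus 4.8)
The plan is to realize $\chow_n(d,m)$ as the image of a constructible set under a morphism, then invoke Chevalley's theorem. First I would recall the classical fact (due to Chow, as stated in the excerpt) that the set $\chow(d,m)^{\Pp}$ of effective algebraic cycles of dimension $d$ and degree $m$ in $\Pp^n$ is a \emph{closed} subvariety of the projective space $\mathbb{P}^N$ of Chow coordinates, where $N$ depends only on $n,d,m$. The affine Chow variety should then be the locus inside $\chow(d,m)^{\Pp}$ consisting of Chow coordinates of cycles $V'$ such that $V'$ contains \emph{no} component supported in the hyperplane at infinity $H_\infty = V(y_0)$, together with a bookkeeping adjustment for degrees, since the projective closure of an affine cycle of degree $m$ need not have projective degree $m$ once we take closures componentwise. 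Actually the cleanest route: an affine cycle $V = \sum t_i V_i$ of dimension $d$ and degree $m$ corresponds bijectively to a projective cycle $V' = \sum t_i \overline{V_i}$ with $\dim V' = d$, $\deg V' = m$, and no component of $V'$ contained in $H_\infty$; conversely any such projective cycle arises this way by intersecting with $\Aa^n$. So $\chow_n(d,m)$ is in natural bijection with the subset of $\chow(d,m)^{\Pp}$ of cycles having no component in $H_\infty$.

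The key steps, in order: (1) State the correspondence $V \leftrightarrow V'$ between effective affine $d$-cycles of degree $m$ in $\Aa^n$ and effective projective $d$-cycles of degree $m$ in $\Pp^n$ with no component in $H_\infty$; check it is a bijection (componentwise, using that $\deg \overline{V_i} = \deg V_i$ for an irreducible affine variety and that $\overline{V_i} \cap \Aa^n = V_i$, $\overline{V_i} \not\subseteq H_\infty$). (2) Invoke Chow's theorem: the set of Chow coordinates of all effective projective $d$-cycles of degree $m$ is a closed subvariety $\mathcal{C} \subseteq \Pp^N$. (3) Show that the subset $\mathcal{C}^\circ \subseteq \mathcal{C}$ of Chow coordinates of cycles having no component contained in $H_\infty$ is constructible. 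For this, use Fact~\ref{factdef}: the family of projective cycles parametrized by $\mathcal{C}$ is a definable (constructible) family of algebraic sets (one recovers the support of the cycle from its Chow form by the incidence-variety / resultant construction, and the multiplicities are read off from the exponents, which are definable by Fact~\ref{factdef}\eqref{hypdegvardef}); by Fact~\ref{factdef}\eqref{compmap} the family of irreducible components is definable in families, and ``some component is contained in $H_\infty$'' is then a definable condition on the parameter, so its complement $\mathcal{C}^\circ$ is constructible. (4) Conclude that $\chow_n(d,m)$, being the image of $\mathcal{C}^\circ$ under the identity identification (or, if one prefers, under the constructible map sending a projective Chow coordinate to the affine Chow coordinate of $V' \cap \Aa^n$), is constructible by Chevalley, and record the dimension $N = N(n,d,m)$ of the ambient projective space.

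\textbf{Main obstacle.} The routine parts are the algebro-geometric bijection in step (1) and the appeal to Chevalley in step (4). The step requiring the most care is step (3): making precise that the passage from a point of $\mathcal{C}$ (a Chow coordinate) back to the underlying cycle, and then to its set of irreducible components, is genuinely a constructible operation uniform in the parameter. This is exactly where Fact~\ref{factdef} is needed --- one must exhibit the support of the cycle with Chow form $F$ as (the Zariski closure of) a definable set in $F$'s coefficients via the standard ``a point $\xi$ lies on $V$ iff every generic hyperplane through $\xi$ meets the zero locus of $F$'' characterization, verify the multiplicities are definably recoverable, and then apply definability of the component decomposition in families. Once that is in hand, ``has a component inside $V(y_0)$'' is a first-order (constructible) condition on the Chow coordinate, and everything else follows formally.
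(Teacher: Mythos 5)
Your proposal is correct, and its skeleton (identify affine $d$-cycles of degree $m$ with projective $d$-cycles of degree $m$ having no component in the hyperplane at infinity, then carve out that locus inside Chow's projective Chow variety) is exactly the paper's. The difference is in how the locus ``no component contained in $V(y_0)$'' is shown to be constructible. You propose the heavy route: recover the support of the cycle from its Chow coordinate as a definable family, invoke Fact~\ref{factdef}\eqref{compmap} to get the irreducible components definably, and test containment in $H_\infty$. That works, and it is machinery the paper uses elsewhere (e.g.\ in Lemma~\ref{lem-chowtoequation} and Theorem~\ref{deltachowexists}), but the paper's own proof of this proposition is much more elementary and sharper: by the Poisson product formula, an irreducible component $V_i'$ lies in $V(y_0)$ exactly when its Chow form $F_i$ is free of $v_{00}$, so the full cycle has no component at infinity if and only if the coefficient of $v_{00}^m$ in $F$ is nonzero, i.e.\ if and only if at least one of the coefficients attached to monomials divisible by $v_{00}^m$ does not vanish. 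Thus $\chow_n(d,m)$ is exhibited as $W\setminus V(c_1,\dots,c_{|N|})$, an explicitly Zariski-open subset of the projective Chow variety -- no appeal to definability of component decompositions, Chevalley, or recovery of defining equations is needed at this stage. Your version buys nothing extra here and obscures the fact that the affine Chow variety is actually open in the projective one, but it is not wrong.
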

\begin{proof} 
Let $M$ be the set of all monomials in $\bv_0,\ldots,\bv_d$ which are of  degree $m$ in each $\bv_i$.
That is, $M=\{\prod_{i=0}^d\prod_{j=0}^nv_{ij}^{\sigma_{ij}}| \sigma_{ij}\in\m Z_{\geq0}, \sum_{j=0}^n\sigma_{ij}=m\}$.
Let $F_0=\sum_{\phi\in M}c_\phi\phi$ where $c_\phi$ are algebraic indeterminates over $K$.
By~\cite{Chow,Hodgevol2},  there exists a projective variety $W \subseteq  \P ^{|M|-1}$ such that
$(\bar{c}_{\phi}:\phi\in M)\in W$ if and if $\bar{F}_0=\sum_{\phi\in M}\bar{c}_\phi\phi$ is the algebraic Chow form of an algebraic cycle in $\P ^n$ of dimension $d$ and degree $m$.

Let $N=\{v_{00}^m\prod\limits_{i=1}^d\prod\limits_{j=0}^nv_{ij}^{\sigma_{ij}}| \sigma_{ij}\in\m Z_{\geq0}, \sum_{j=0}^n\sigma_{ij}=m\} \subseteq   M$
and let $\{c_1,\ldots,c_{|N|}\}$ be the set of all coefficients of $F_0$ with respect to monomials contained in $N$.
Let $W_1=W\setminus  V(c_1,\ldots,c_{|N|})$, where $V(c_1,\ldots,c_{|N|}) \subseteq  \mathbb{P}^{|M|-1}$ temporarily  denotes the projective variety defined by $c_1=\cdots=c_{|N|}=0$.
We claim that there is a  one-to-one correspondence between $\chow_n(d,m)$ and $W_1$ via algebraic Chow forms.
On the one hand, for each point in $\chow_n(d,m)$ 
corresponding to an algebraic cycle $V$, the algebraic Chow form $F=\sum_{\phi\in M}\bar{c}_\phi\phi$ of $V$ has the following Poisson-type product formula:
$F=A\prod_{\tau=1}^m(v_{00}+\sum_{j=1}^nv_{0j}\xi_{\tau j})$ where $A\in k[\bv_1,\ldots,\bv_d]$ and $(\xi_{\tau1},\ldots,\xi_{\tau n})$ is a generic point of a component of $V$.
Thus, there exists at least one monomial $\phi\in N$ such that $\phi$ appears effectively in $F$.
As a consequence, $(\bar{c}_\phi)\in W_1$.
On the other hand, for each $(\bar{c}_{\phi}:\phi\in M)\in W_1$, $\bar{F}_0=\sum_{\phi\in M}\bar{c}_\phi\phi$ is the algebraic Chow form of 
an algebraic cycle $\bar{V} '=\sum_i t_i V'_i$ in $\P ^n$ of dimension $d$ and degree $m$.
Also from the Poisson-product formula, we can see that each $V'_i\not  \subseteq  \mathbb{U}_0$, where $\mathbb{U}_0$ is the particular open set of $\mathbb{P}^n(K)$ determined by $y_0\neq 0$.
Suppose $(1,a_{i1},\ldots,a_{in})\in\mathbb{P}^n$ is a generic point of $V_i'$. Let $V_i\subset\mathbb{A}^n$ be the affine variety with $(a_{i1},\ldots,a_{in})$ as a generic point. 
Thus, $\bar{F}_0$ is the algebraic Chow form of the algebraic cycle $\bar{V}=\sum_i t_i V_i\in \chow_n(d,m)$.
Hence, we have proved that $\chow_n(d,m)$ is a constructible set.
\end{proof}

Algebraic Chow forms can uniquely determine the corresponding algebraic varieties.
In particular, the defining equations of an irreducible variety can be recovered from its Chow form; see \cite[p. 51]{Hodgevol2} or \cite[Theorem 4.45]{GDIT}.
Thus, we can obtain the following lemma which will be needed in Section \ref{deltaChow-1}.

\begin{lem} \label{lem-chowtoequation} 
The set of irreducible varieties in $\mathbb A^n$ of dimension $d$ and degree $m$ is a definable family.
\end{lem}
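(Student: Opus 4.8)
The plan is to combine Proposition~\ref{prop-chowconstructible}, which exhibits $\chow_n(d,m)$ as a constructible set inside a projective space $\mathbb{P}^{|M|-1}$, with the classical fact (cited just before this lemma) that an irreducible variety can be reconstructed from its Chow form. First I would pass to the subset of $\chow_n(d,m)$ corresponding to \emph{irreducible} cycles of degree exactly $m$, rather than arbitrary effective cycles: by part~(\ref{irrdef}) of Fact~\ref{factdef} (or directly, since a Chow form is a product of the Chow forms of the components, and irreducibility corresponds to the coefficient vector not factoring as a nontrivial product of lower-degree Chow-form coefficient vectors) this is a constructible condition on the Chow coordinate, so the set $C \subseteq \mathbb{P}^{|M|-1}$ of Chow coordinates of irreducible varieties of dimension $d$ and degree $m$ in $\mathbb{A}^n$ is constructible. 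This $C$ will serve as the parameter set $B$ for the definable family.

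Next I would produce, uniformly over $c \in C$, a defining system of equations for the variety $V_c$ with Chow form having coefficient vector $c$. The point is that the recovery procedure in \cite[p.~51]{Hodgevol2} (equivalently \cite[Theorem 4.45]{GDIT}) is given by explicit polynomial formulae in the Chow coordinates: from the homogeneous form $F = \sum_{\phi \in M} c_\phi \phi$ one writes down equations for $V'_c \subseteq \mathbb{P}^n$ whose coefficients are polynomials in the $c_\phi$ and whose degrees are bounded in terms of $m$, $n$, $d$ alone. Dehomogenizing (setting $y_0 = 1$, using that $V'_c \not\subseteq \mathbb{U}_0$ as established in the proof of Proposition~\ref{prop-chowconstructible}) gives a bounded-degree polynomial system $\{ g_j(c; y_1, \ldots, y_n) = 0 \}$ defining $V_c \subseteq \mathbb{A}^n$, with the $g_j$ depending definably (indeed polynomially) on $c$. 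Taking the formula $\psi(y; c) := \bigwedge_j g_j(c; y) = 0$ together with $\theta(c)$ defining $C$ then exhibits $\{ V_c \}_{c \in C}$ as a definable family in the precise sense of Definition~\ref{def-pro-seq}'s ambient notion of definable family; and by construction every irreducible variety of dimension $d$ and degree $m$ occurs as some $V_c$, since it has a (unique) Chow coordinate lying in $C$.

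The main obstacle is the uniformity of the reconstruction step: one must know that the equations recovered from a Chow form can be written with coefficients that are polynomial (or at least definable) in the Chow coordinates, with degree and number bounded independently of the particular variety. This is implicit in the classical treatment but worth spelling out; alternatively, one can sidestep the explicit formulae by invoking Fact~\ref{factdef} directly. Namely, consider the incidence-style set $\{ (c, y) : c \in C,\ y \in V_c \}$: a point $y$ lies on $V_c$ exactly when the generic hyperplane section data forces $y$ into the finite intersection encoded by $F$, which is a first-order (indeed constructible) condition on $(c,y)$ once one knows $\chow_n(d,m)$ is constructible and the Poisson product formula. Then the fibers of the projection to the $c$-coordinate form a definable family by compactness/definability of fibers in ACF, and parts~(\ref{clodef}) and~(\ref{dimdegdef}) of Fact~\ref{factdef} ensure each fiber is Zariski closed of the correct dimension $d$ and degree $m$, hence equals $V_c$. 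Either route yields the claim; I would present the second, since it quotes Fact~\ref{factdef} rather than re-deriving effective bounds from \cite{Hodgevol2}.
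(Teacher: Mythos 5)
Your proposal is correct, and your ``first route'' is essentially verbatim the proof the paper gives: define $C$ as the set of Chow coordinates in $\chow_n(d,m)$ whose associated polynomial $\bar F_0$ (as in Proposition~\ref{prop-chowconstructible}) is irreducible --- a definable condition on the coefficients --- and then recover $V_c$ from $\bar F_0$ via the algebraic analog of \cite[Theorem 4.45]{GDIT}. The one nuance your first route glosses over is that this reconstruction does not hand you a closed system of equations for $V_c$ outright: it produces two finite sets $S_1,S_2$ of polynomials with coefficients linear in $c$ such that $V_c=\overline{V(S_1)\setminus V(S_2)}$, and it is precisely to pass from this quasi-variety to $V_c$ that the paper invokes definability of Zariski closure in families (Fact~\ref{factdef}). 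Your preferred second route is a genuine, and legitimate, variant: instead of the explicit recovery formulae one uses the membership criterion that $y\in V_c$ iff every $(n-d-1)$-plane through the projective point $[1:y_1:\cdots:y_n]$ lies in the zero locus of $F_c$, i.e.\ $\forall \bv_0,\ldots,\bv_d\,\bigl(\bigwedge_{i}\langle \bv_i,(1,y)\rangle=0\rightarrow F_c(\bv_0,\ldots,\bv_d)=0\bigr)$, which is definable in $(c,y)$ by quantifier elimination in ACF; this buys independence from \cite{Hodgevol2} and from the Zariski-closure step, at the cost of having to state and prove that criterion precisely --- your phrasing (``the generic hyperplane section data forces $y$ into the finite intersection encoded by $F$'') is too vague to stand as written, and the concluding appeal to the dimension and degree of the fibers is superfluous once the criterion is correctly stated, since the fiber is then $V_c$ by definition. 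Either route establishes the lemma.
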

\proof Let $C\subseteq\chow_n(d,m)$ be the set of Chow coordinates of all irreducible varieties of dimension $d$ and degree $m$ in $\mathbb A^n$.
Then $C$ is a constructible set. 
Indeed, given $c\in \chow_n(d,m)$, $c$ is the Chow coordinate of an irreducible variety if and only if the corresponding
polynomial $\bar{F}_0$ with coefficient vector $c$ defined as in the proof of Proposition \ref{prop-chowconstructible}  is irreducible,
which is a definable condition. 
Given $c\in C$,  the corresponding polynomial $\bar{F}_0$ is the Chow form of some irreducible variety $V_{c}$.
By the algebraic analog of \cite[Theorem 4.45]{GDIT}, 
two subsets of polynomials in $x_1,\ldots,x_n$ with coefficients linear in $c$, say $S_1$ and $S_2$, can be computed  from $\bar{F}_0$,
such that $V_{c}$ is the Zariski closure of the quasi-variety $V(S_1)\backslash V(S_2)$.
Since Zariski closure is definable in families with respect to ACF,
$(V_{c})_{c\in C}$, the set of irreducible varieties in $\mathbb A^n$ of dimension $d$ and degree $m$,
 is a definable family.
\qed

\vskip5pt
In the remainder of this section, we recall the definitions and properties of differential  Chow forms and propose the main problem we are considering in this paper. Let  $V \subseteq   \m A^n$ be an irreducible differential variety defined over $K$ 
of dimension $d$  and $$L_i=u_{i0}+u_{i1}y_1+\cdots +u_{in}y_n\, (i=0,\ldots,d)$$ be $d+1$ differentially generic inhomogeneous linear forms.
For each $i$, denote $\textbf{u}_i=(u_{i0},u_{i1},\ldots,u_{in})$. Let
\begin{equation}
\mathcal{I}_{\bu}=[\mathbb{I}(V),L_0,\ldots,L_d]_{K\{y_1,\ldots,y_n,\textbf{u}_0,\ldots,\textbf{u}_d\}}\cap K\{\textbf{u}_0,\ldots,\textbf{u}_d\}.
\end{equation}
Then by~\cite[Lemma 4.1]{GDIT},
$\mathcal{I}_{\bu}$ is a prime differential ideal in $K \{\bu_0,\ldots,\bu_d\}$ of codimension one. 

\begin{defn}  \label{def-chow}
The {\em differential Chow form} of $V$ or $\mathbb{I}(V)$ is defined as the unique (up 
to appropriate scaling) irreducible differential polynomial $F(\bu_0,\ldots,\bu_d)$
such that $\mathcal{I}_\bu=\sat(F)$ under any differential ranking.
\end{defn} 

Note that from Definition \ref{def-chow}, the differential Chow form $F$ itself is a characteristic set of $\mathcal{J}_\bu$ under any differential ranking. 
Differential Chow forms uniquely characterize their corresponding differential ideals.
The following theorem gives some basic properties of differential Chow forms.

\begin{thm}\cite{GDIT} \label{th-chowproperty}
Let $V$ be an irreducible differential variety defined over $K$ with differential dimension $d$ and order $h$. 
Suppose $F(\bu_0,\ldots,\bu_d)$ is the differential Chow form of $V$. Then $F$
has the following properties.
\begin{itemize}
\item[1)] $\ord(F)=h$. In particular, $\ord(F,u_{i0})=h$ for each $i=0,\ldots,d$.

\item[2)] $F$ is differentially homogenous of the same degree $m$ in each $\bu_i$. 
This $m$ is called the {\em differential degree} of $V.$

\item[3)] Let $g=\deg(F,u_{00}^{(h)})$. There exist elements $\xi_{\tau j} \in \mc U$ for $\tau = 1, \ldots, g$ and $j = 1, \ldots, n$
such that  
$$F=A\prod_{\tau=1}^g(u_{00}+u_{01}\xi_{\tau1}+\cdots+u_{0n}\xi_{\tau n})^{(h)}$$ where $A$ is a differential polynomial free from $u_{00}^{(h)}$.
Moreover, each $\xi_\tau=(\xi_{\tau1},\ldots,\xi_{\tau n})$ is a generic point of $V$ and  $L_1,\ldots, L_d$ all vanish at $\xi_\tau$.

\item[4)]  The algebraic variety $B_h(V) \cap V(L_1^{(h)}, \cdots,  L_d^{(h) }, 
L_0^{(h-1)})  \subseteq   \tau_h \m A^n$ is of dimension zero.  Its cardinality,
$g$, is called the \emph{leading differential degree} of $V$. Here, the $L_i^{(j)}$ are considered as polynomials in variables $y_1^{[h]},\ldots,y_n^{[h]}$.
\end{itemize}
\end{thm}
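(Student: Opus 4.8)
The plan is to pin down all four statements by working with one explicit generic point of $\mathcal{I}_{\bu}$, reducing the structural parts to the classical Poisson product formula for algebraic Chow forms, to the cutting theorem (Theorem~\ref{GIT}), and to Kolchin polynomials. By \cite[Lemma 4.1]{GDIT}, $\mathcal{I}_{\bu}$ is prime of differential codimension one, so $\mathcal{I}_{\bu}=\sat(F)$ with $F$ irreducible, and since $K\{\bu_0,\ldots,\bu_d\}$ has $(d+1)(n+1)$ differential indeterminates, Lemma~\ref{def-order-sadik} forces the characteristic set to be the single polynomial $F$. For the generic point, fix $\xi=(\xi_1,\ldots,\xi_n)$ a generic point of $V$ over $K$, take the $u_{ij}$ ($0\le i\le d$, $1\le j\le n$) differentially independent over $K\langle\xi\rangle$, and set $\zeta_{i0}:=-\sum_{j=1}^n u_{ij}\xi_j$. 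Then $\zeta:=(\zeta_{00},u_{01},\ldots,u_{0n};\ldots;\zeta_{d0},u_{d1},\ldots,u_{dn})$ lies on $\V(\mathcal{I}_{\bu})$ because every $L_i$ vanishes at $\xi$; since the $d+1$ hyperplanes $L_i$ all pass through $\xi$, the fibre of $(y,\bu)\mapsto\bu$ over $\zeta$ is just $\{\xi\}$, so that map is generically finite and $\dtrdeg K\langle\zeta\rangle/K=(d+1)(n+1)-1$, i.e.\ $\zeta$ is generic.

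With this in hand, 1) and 2) are bookkeeping. For 2), $\mathcal{I}_{\bu}$ is invariant under $\bu_i\mapsto\lambda_i\bu_i$ for new differential indeterminates $\lambda_i$, so $F$ is differentially homogeneous in each block, and the symmetry of the construction under permuting $L_0,\ldots,L_d$ forces the block degrees to agree; call the common value $m$. For 1), $\ord(F)=\ord(\mathcal{I}_{\bu})$ is the constant term of the Kolchin polynomial of $K\langle\zeta\rangle/K$. One has $K\langle\zeta\rangle\subseteq K\langle\xi,(u_{ij})_{j\ge1}\rangle=:M$ with $\dtrdeg M/K=(d+1)(n+1)-1$, so $M/K\langle\zeta\rangle$ is differentially algebraic, and an elimination argument shows their Kolchin polynomials over $K$ coincide; since that of $M/K$ is $(d+1)n(t+1)+\big(d(t+1)+h\big)$, with constant term $h$, we get $\ord(F)=h$. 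That $\ord(F,u_{i0})=h$ (and not merely $\le h$) for every $i$ then follows from 3), whose product expression exhibits $u_{i0}^{(h)}$ in $F$, using the symmetry among the blocks again.

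The substance is 3), the differential Poisson product formula, which I would prove by imitating the classical argument. Under an orderly ranking, 2) places a leader of $F$ in some block, say $\bu_0$, so $u_{00}^{(h)}$ occurs in $F$; set $g:=\deg(F,u_{00}^{(h)})$ and let $A$ be the leading coefficient of $F$ regarded as a polynomial in $u_{00}^{(h)}$, so $A$ is free from $u_{00}^{(h)}$. Over the algebraic closure $\widetilde{K}$ of the field generated over $K$ by the remaining variables of $F$ together with $u_{00},\ldots,u_{00}^{(h-1)}$, factor $F=A\prod_{\tau=1}^{g}(u_{00}^{(h)}-\theta_\tau)$. The specialization onto $\zeta$ sends $u_{00}^{(h)}$ to $\zeta_{00}^{(h)}=-\sum_j\sum_{l=0}^h\binom{h}{l}u_{0j}^{(l)}\xi_j^{(h-l)}$, so this is one root; equivalently $u_{00}^{(h)}-\theta_1=\big(u_{00}+\sum_j u_{0j}\xi_{1j}\big)^{(h)}$ with $\xi_1=\xi$ a generic point of $V$ on which $L_1,\ldots,L_d$ vanish. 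The other $\theta_\tau$ are the $\widetilde{K}$-conjugates of $\theta_1$; the key point is that, after eliminating by means of the lower-order data $\zeta_{00}^{(l)}$ ($l<h$) and $\zeta_{i0}$ ($1\le i\le d$), all of which lie in $\widetilde{K}$, the conditions that $\xi$ lie on $V$ and on $\V(L_1,\ldots,L_d)$ become conditions over $\widetilde{K}$ and so persist under conjugation; here Theorem~\ref{GIT} supplies the essential input that $V\cap\V(L_1,\ldots,L_d)$ is irreducible of differential dimension $0$ and order $h$, which is why there are exactly $g$ roots and why each is carried by a generic point $\xi_\tau$ of $V$. This yields $F=A\prod_{\tau=1}^g\big(u_{00}+\sum_{j=1}^n u_{0j}\xi_{\tau j}\big)^{(h)}$. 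Finally, 4) reformulates 3) inside $\tau_h\m A^n$: $B_h(V)$ has dimension $\omega_V(h)$, the prolonged conditions $L_1^{(h)},\ldots,L_d^{(h)},L_0^{(h-1)}$ cut this to $0$ because they meet $B_h(V)$ transversally along the dense locus of differential points, and the resulting $g$ points are precisely the $\nabla_h(\xi_\tau)$ of 3). The hard part is the conjugation/descent step in 3), hand in hand with pinning the order at exactly $h$ rather than something smaller; the formal $B_h$ bookkeeping feeding 1) and 4) is where that is most delicate.
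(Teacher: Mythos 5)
This theorem carries the citation \cite{GDIT} and the paper supplies no proof of it, so there is no internal argument to compare against; I can only measure your proposal against the original proof in~\cite{GDIT}. Your architecture is the same as the source's: the explicit generic point $\zeta$ of $\mathcal{I}_{\bu}$, differential homogeneity via rescaling and symmetry, a Poisson-type factorization in $u_{00}^{(h)}$, the generic intersection theorem (Theorem~\ref{GIT}), and Kolchin-polynomial bookkeeping. So the route is right, but as written several load-bearing steps are asserted rather than proved, and one of them is stated in a form that is actually false.

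The false step is in 1): you claim that since $M=K\langle\xi,(u_{ij})_{j\ge 1}\rangle$ is differentially algebraic over $K\langle\zeta\rangle$, ``an elimination argument shows their Kolchin polynomials over $K$ coincide.'' Differential algebraicity does not preserve Kolchin polynomials: $K\subseteq K\langle y\rangle$ with $y$ generic for an irreducible order-$h$ ODE over $K$ is differentially algebraic but changes the constant term from $0$ to $h$. What is really needed is that $\xi$ lies in $K\langle\zeta\rangle$ with explicit order control; in~\cite{GDIT} this comes from the inversion formula $\xi_j=\bigl(\partial F/\partial u_{0j}^{(h)}\bigr)/\bigl(\partial F/\partial u_{00}^{(h)}\bigr)$ at $\zeta$ (quoted in this paper as Theorem 4.13 of~\cite{GDIT}), but that formula presupposes $\ord(F)=h$, so invoking it here would be circular; the original argument instead pins the order down through the zero-dimensional differential variety $V\cap\V(L_1,\dots,L_d)$, whose order is exactly $h$ by Theorem~\ref{GIT}. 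Two further gaps: (i) the claim that $\xi$ is the sole point of $V$ in the fibre over $\zeta$ is not linear algebra --- $d+1$ hyperplanes through a point of $\m A^n$ meet in a positive-dimensional linear subspace when $d+1<n$ --- and requires the genericity of these hyperplanes among those through $\xi$; it is itself a lemma in~\cite{GDIT}. (ii) The conjugation/descent step in 3), that every root $\theta_\tau$ is carried by a generic point $\xi_\tau$ of $V$ on which $L_1,\dots,L_d$ vanish, is the heart of the Poisson formula and you only gesture at it; similarly, ``transversality along the dense locus of differential points'' in 4) is an assertion, since the $L_i^{(j)}$ are prolongations of differential hyperplanes rather than independent generic hyperplanes of $\tau_h\m A^n$. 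Right skeleton, but the proof is not yet complete.
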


We give a simple example to illustrate these invariants of a differential variety.

\begin{example}
Let $n=1$ and $V=\mathbb{V}(y^2y'+1) \subseteq  \mathbb{A}^1$. Then the differential Chow form of $V$ is $F(\bu_0)=u_{00}^2u_{01}u_{00}'-u_{00}^3u_{01}'-u_{01}^4$.
The order of $V$ is 1, the differential degree of $V$ is 4 and the leading differential degree of $V$ is 1.
\end{example}

The following lemma is useful in the computation of  differential Chow forms from the point view of algebraic ideals,
which follows directly from the definition and properties of the differential Chow form.  

\begin{lem} \label{lem-chowexpression}
Let $V$ be an irreducible differential variety defined over $K$ with differential dimension $d$ and order $h$. 
Let $F(\bu_0,\ldots,\bu_d)$ be the differential Chow form of $V$. Then
\begin{equation} \label{eq-chowexpression}
(F)=\big(I(B_h(V)),L_0^{[h]},\ldots,L_d^{[h]}\big)\cap K[\textbf{u}_0^{[h]},\ldots,\textbf{u}_d^{[h]}].
\end{equation}
\end{lem}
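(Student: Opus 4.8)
The plan is to compute the differential Chow form's principal ideal by relating the two defining descriptions of $\mathcal{I}_{\bu}$: the differential one from Definition~\ref{def-chow}, and a finite-order algebraic one that only remembers derivatives up to order $h$. Recall that $\mathcal{I}_{\bu}=[\mathbb{I}(V),L_0,\ldots,L_d]\cap K\{\bu_0,\ldots,\bu_d\}=\sat(F)$, and by Theorem~\ref{th-chowproperty}(1) we have $\ord(F)=h$ with $\ord(F,u_{i0})=h$ for each $i$. First I would pass from the differential ideal $[\mathbb{I}(V),L_0,\ldots,L_d]$ to a truncation at order $h$. The key point is that generic points of $V$ that also lie on $L_1,\ldots,L_d$ exist (Theorem~\ref{th-chowproperty}(3)), so that the image of $B_h(V)\cap V(L_1^{[h]},\ldots,L_d^{[h]},L_0^{[h]})$ under the projection to $\bu$-coordinates has Zariski closure cut out exactly by $F$ and its derivatives up to order $h$; but since we intersect only with $K[\bu_0^{[h]},\ldots,\bu_d^{[h]}]$, only $F$ itself (and not $\delta F,\ldots$) can appear, as those have order $>h$.

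More concretely, the main step is to show
$$\big(I(B_h(V)),L_0^{[h]},\ldots,L_d^{[h]}\big)\cap K[\bu_0^{[h]},\ldots,\bu_d^{[h]}]$$
is a principal ideal whose generator is, up to a scalar, $F$. For the inclusion $(F)\subseteq$ RHS: a generic point $\eta$ of $V$ with $L_1(\eta)=\cdots=L_d(\eta)=0$ satisfies $\nabla_h(\eta)\in B_h(V)$; substituting the linear forms shows that on the variety defined by the left-hand ideal, $F$ vanishes (this is essentially the Poisson-type factorization in Theorem~\ref{th-chowproperty}(3), specialized by forgetting the top-order-$(h)$ piece of $u_{00}$, i.e. restricting attention to $\bu^{[h]}$; one must check $F\in K[\bu_0^{[h]},\ldots,\bu_d^{[h]}]$, which is exactly $\ord(F)=h$). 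Hence $F$ lies in the radical of the left ideal, and since the left ideal is prime — it is the kernel of $K[\bu_0^{[h]},\ldots,\bu_d^{[h]}]\to K\langle\text{generic point data}\rangle$, a domain — $F$ lies in it, giving $(F)\subseteq$ RHS. For the reverse inclusion, I would argue by dimension: the left-hand ideal, once contracted to $K[\bu_0^{[h]},\ldots,\bu_d^{[h]}]$, defines a variety of codimension one in $\bu$-space (the $\bu$ must satisfy one algebraic relation, coming from the finitely many intersection points parametrized by $g=$ leading differential degree), hence its radical is a principal prime ideal; as it contains the irreducible $F$, the radical equals $(F)$. Finally, to get equality of ideals rather than radicals, one invokes that the left-hand ideal is already radical — indeed prime — because it is a contraction of the prime ideal $[\mathbb{I}(V),L_0,\ldots,L_d]_{K\{y,\bu\}}$ intersected appropriately, or directly because $B_h(V)$ is irreducible (as $V$ is) and adjoining the generic linear forms $L_i^{[h]}$ keeps the quotient a domain.

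The main obstacle I anticipate is the passage between the differential ideal $[\mathbb{I}(V),L_0,\ldots,L_d]\cap K\{\bu\}$ and the algebraic ideal $(I(B_h(V)),L_0^{[h]},\ldots,L_d^{[h]})\cap K[\bu^{[h]}]$: a priori $[\mathbb{I}(V)]$ truncated to order $h$ is larger than $I(B_h(V))$ (the inclusion noted after the definition of $\tau_m V$ may be proper), so one cannot naively truncate. The resolution is that we also adjoin the $L_i$ and their derivatives, and the differential Chow form construction in \cite{GDIT} (Lemma 4.1 there, and Theorem~\ref{th-chowproperty}) already encodes that the only order-$\le h$ relations among the $\bu$ forced by $V$ come through $B_h(V)$ together with the linear forms — this is precisely the content that makes $\sat(F)$ at order $h$ match the algebraic intersection. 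So rather than reproving this from scratch, I would cite the relevant structural facts from \cite{GDIT} (the Poisson formula and $\ord(F)=h$) and assemble the ideal-theoretic equality from primality plus the codimension-one count. The routine verifications — that $F\in K[\bu^{[h]}]$, that the contracted ideal is prime of codimension one, and that an irreducible polynomial inside a principal prime ideal generates it — I would not grind through in detail.
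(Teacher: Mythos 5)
Your proposal is correct and follows essentially the same route as the paper: both arguments identify $\big(I(B_h(V)),L_0^{[h]},\ldots,L_d^{[h]}\big)\cap K[\bu_0^{[h]},\ldots,\bu_d^{[h]}]$ as a prime ideal with generic point $(\zeta_0^{[h]},\ldots,\zeta_d^{[h]})$, where $\zeta_i=-\sum_{j}u_{ij}\xi_j$ for $\xi$ a generic point of $V$ and the $u_{ij}$ differentially independent over $K\langle\xi\rangle$, and both exploit $\ord(F)=h$ to match this with $\sat(F)\cap K[\bu_0^{[h]},\ldots,\bu_d^{[h]}]=(F)$. The only (cosmetic) difference is that the paper obtains equality at once by noting the two primes share this generic point, whereas you derive the reverse inclusion from the codimension-one/UFD principality argument.
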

\begin{proof}
By the definition of differential Chow form, $\mathcal{I}_{\bu}=\sat(F)$ under any ranking.
Since $\ord(F)=h$ by Theorem \ref{th-chowproperty}, $\mathcal{I}_{\bu}\cap K[\textbf{u}_0^{[h]},\ldots,\textbf{u}_d^{[h]}]=(F)$.
 Let $\mathcal{J}=\big(I(B_h(V)),L_0^{[h]},\ldots,L_d^{[h]}\big)\cap K[\textbf{u}_0^{[h]},\ldots,\textbf{u}_d^{[h]}]$.
Take a generic point $\xi=(\xi_1,\ldots,\xi_n)$ of $\mathbb I(V)$ such that the $u_{ij}$ are differentially independent over $K\langle \xi\rangle$.
Let $\zeta_i=-\sum_{j=1}^nu_{ij}\xi_j$.
It is easy to show that $(\zeta_0^{[h]},\ldots,\zeta_d^{[h]})$ is a generic point of both 
$\mathcal{J}$ and $\mathcal{I}_{\bu}\cap K[\textbf{u}_0^{[h]},\ldots,\textbf{u}_d^{[h]}]$, so the two ideals are equal, which implies (\ref{eq-chowexpression}).
\end{proof}

A differential variety is called \emph{order-unmixed} if all its components have the same differential dimension and order.
Let $V$ be an order-unmixed differential variety of dimension $d$ and
order $h$ and $V=\bigcup_{i=1}^\ell V_{i}$ its minimal irreducible
decomposition with $F_{i}(\bu_{0}, \bu_{1}, \ldots, \bu_{d})$ the
differential Chow form of $V_{i}$. 
Let  \begin{equation}\label{eq-sss1}
 F(\bu_{0},\ldots,\bu_{d})=\prod_{i=1}^\ell F_{i}(\bu_{0}, \bu_{1}, \ldots, \bu_{d})^{s_{i}}
 \end{equation}
  with $s_{i}$ arbitrary nonnegative integers.
In~\cite{GDIT}, a {\em
differential algebraic cycle} is defined associated to (\ref{eq-sss1}) similar to its algebraic analog, 
that is, $\VB=\sum_{i=1}^\ell s_i V_i$ is a differential algebraic cycle with $s_i$ as the multiplicity of $V_i$ and 
$F(\bu_{0},\ldots,\bu_{d})$ is called the differential Chow form of $\VB$.

Suppose each $V_i$ is of differential degree $m_i$ and leading differential degree $g_i$,
then the leading differential degree and differential degree of $\VB$ is defined to be
$\sum_{i=1}^\ell s_ig_i$ and $\sum_{i=1}^\ell s_im_i$ respectively.

\begin{defn}  \label{def-index}
A differential cycle $\VB$ in the $n$ dimensional affine space $\mathbb{A}^n$
with dimension $d$, order $h$, leading differential degree $g$, and
differential degree $m$ is said to be of index $(d,h,g,m)$ in $\m A^n$.  
\end{defn} 

\begin{defn}
Let $\VB$ be a differential cycle of index $(d,h,g,m)$ in $\m A^n$.
The {\em differential Chow coordinate} of $\VB$ is the coefficient vector of the differential Chow form of $\VB$
considered as a point in a higher dimensional projective space determined by $(d,h,g,m)$ and $n$. 
\end{defn}

\begin{defn}  Fix an index $(d,h,g,m)$ and $n$.   To each differential 
field $k$ we associate
 the set $$\Cc_{(n,d,h,g,m)}(k) := \{\VB:  \VB \text{\,is a differential cycle of index $(d,h,g,m)$ in $\m A^n$ \text{over} $k$}\}$$
 thereby defining a functor from the category of differential fields to the category of sets.   
 If this  functor is represented by some differentially constructible set,
  meaning that there is a differentially constructible set and a natural isomorphism between the functor $\Cc(n, d, h, g, m)$ and the functor given by 
 this differentially constructible set  (regarded also as a  functor from the category of differential fields to the category of sets),  
 then we call this differentially constructible set the
 \emph{differential Chow variety} of index $(d,h,g,m)$ of $\m A^n$ and denote it by $\delta$-$\chow({n,d,h,g,m})$.
In this case, we also say the differential Chow variety $\delta$-$\chow({n,d,h,g,m})$ exists. 
\end{defn}

\vskip2pt
\begin{thm}~\cite[Theorem 5.7]{GDIT} 
In the case $g=1,$ the differential Chow variety $\delta \text{-} \chow(n,d,h,1,m )$ exists. 
\end{thm}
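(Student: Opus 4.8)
The plan is to use the severe rigidity of the hypothesis $g=1$ to reduce the problem to a parameterization of prolongation admissible algebraic subvarieties of $\tau_h\m A^n$, and then to transport this along the differential Chow form map. The first step is to note that a cycle of index $(d,h,1,m)$ is automatically a single irreducible differential variety of multiplicity one: the leading differential degree is additive over the components of an effective cycle (by the product formula defining the differential Chow form of a cycle) and every nonempty irreducible differential variety has leading differential degree at least $1$ (apply Theorem~\ref{GIT} to reduce to the zero-dimensional case and then use Theorem~\ref{th-chowproperty}(4)), so $g=1$ forces $\mathbf V = V$ with $V$ irreducible of differential dimension $d$, order $h$, leading differential degree $1$, and differential degree $m$. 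Thus it suffices to represent the functor of such $V$. The structural feature that makes $g=1$ tractable is that, by Theorem~\ref{th-chowproperty}(3), the differential Chow form $F$ of $V$ is \emph{linear} in $u_{00}^{(h)}$ (and, by symmetry of the construction, in each $u_{i0}^{(h)}$); this is ultimately what allows the recovery operations below to be performed by bounded eliminations instead of by inverting the Ritt problem.

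Next I would set up the correspondence $V \mapsto U := B_h(V)$. By Lemma~\ref{uniquegenericomponent} and Remark~\ref{rem-uniquegenericomponent}, $U$ is an irreducible prolongation admissible subvariety of $\tau_h\m A^n = \m A^{n(h+1)}$ with $\dim U = d(h+1)+h$ and $\dim\overline{\pi_{h,h-1}(U)} = (d+1)h$; the differential variety generated by the prolongation sequence of $U$ has a unique dominant component $W_1$, and $W_1 = V$. Conversely, every irreducible prolongation admissible $U$ satisfying these two dimension equalities produces in this way an irreducible differential variety $W_1$ of differential dimension $d$ and order $h$, and the differential dimension, order, leading differential degree, differential degree, and differential Chow form of $W_1$ are all computed from $U$ — via the ``first-column'' characteristic set recipe of Remark~\ref{rem-uniquegenericomponent} together with the identity of Lemma~\ref{lem-chowexpression}. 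The reason for routing through $U$ is that this recipe manipulates only bounded-order data: extracting an algebraic characteristic set of $U$ (which, by Lemma~\ref{lem-charsetprolongationadm}, has a restricted shape), forming the ideal $\big(I(U),L_0^{[h]},\dots,L_d^{[h]}\big)$, and eliminating the $y_i^{(j)}$ to obtain a generator $F$, are all operations definable in families with respect to $\mathrm{ACF}$ by Fact~\ref{factdef} and the method of Lemma~\ref{lem-chowtoequation}. In particular we never need to decide Kolchin closure or primality of a differentially presented ideal, which is exactly where the Ritt problem would obstruct a more direct approach.

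I would then bound the search space. By the theory of differential Chow forms (\cite{GDIT}; these are the degree and order bounds refined in Section~\ref{confine}), the differential Chow form of a cycle of index $(d,h,1,m)$ has order $h$ and algebraic degree bounded by a function of $m$ and $h$, and recovering the equations of $B_h(V)$ from $F$ — the algebraic analogue of \cite[Theorem 4.45]{GDIT}, as in Lemma~\ref{lem-chowtoequation} — gives a bound $N = N(n,d,h,m)$ on $\deg U$. Hence every relevant $U$ occurs among the irreducible subvarieties of $\m A^{n(h+1)}$ of dimension $d(h+1)+h$ and degree at most $N$, and by Lemma~\ref{lem-chowtoequation} (applied in $\m A^{n(h+1)}$) these form a definable family $\{U_b\}_{b\in B}$. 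Intersecting $B$ with the definable conditions that $U_b$ be prolongation admissible (Lemma~\ref{prolongadmissible-definable}), that $\dim U_b = d(h+1)+h$ and $\dim\overline{\pi_{h,h-1}(U_b)} = (d+1)h$ (Fact~\ref{factdef}), and that the dominant component of the prolongation sequence of $U_b$ have leading differential degree $1$ and differential degree $m$ (definable, by the computability-from-$U_b$ just discussed together with Fact~\ref{factdef}(4)), yields a differentially constructible $B' \subseteq B$. Over $B'$ the assignment $b \mapsto$ (coefficient vector of the differential Chow form of the dominant component of $U_b$) is definable; I would take its image $C$ in the appropriate projective Chow-coordinate space to be $\delta\text{-}\chow(n,d,h,1,m)$. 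The map sending the coefficient vector of $F$ to the unique cycle with $F$ as differential Chow form is then a well-defined bijection from $C$ onto $\Cc(n,d,h,1,m)(\mc U)$ — well defined and injective because differential Chow forms determine differential cycles (\cite{GDIT}), surjective by the correspondence above and the degree bound — and it commutes with base change since the construction is carried out over $\Qq$, giving the natural isomorphism of functors. For the universal family I would attach to $c\in C$ the differential variety $\V(\sat(\mathcal B(b)))$ for any $b\in B'$ with Chow coordinate $c$, where $\mathcal B(b)$ is the first-column differential characteristic set read off from an algebraic characteristic set of $U_b$ as in Remark~\ref{rem-uniquegenericomponent}; Lemma~\ref{lem-charsetprolongationadm} supplies the special shape making $b\mapsto\mathcal B(b)$ definable in families, and elimination of imaginaries for $\mathrm{DCF}_0$ lets this family descend to a differentially constructible family over $C$.

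The main obstacle, as I see it, is the pair of ``computable in families'' assertions: that the differential Chow form and the defining differential ideal of the dominant component of the prolongation sequence generated by $U_b$ are definable functions of $U_b$. Verifying these is where one must unwind Lemmas~\ref{lem-charsetprolongationadm} and~\ref{uniquegenericomponent} carefully enough to see that the reconstruction of $W_1$ from $U_b$ is a sequence of bounded-order eliminations (characteristic set extraction, ideal formation, elimination of the $y$-variables) each definable over $\mathrm{ACF}$ — and it is precisely the $g=1$ hypothesis, through the linearity of $F$ in each $u_{i0}^{(h)}$, that keeps those eliminations harmless. The secondary substantive input is the uniform degree bound $N(n,d,h,m)$ on $B_h(V)$, which rests on the order and degree estimates for differential Chow forms.
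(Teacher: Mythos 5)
Your argument is essentially sound as a proof of representability, but it is not the proof behind the statement as it appears here: the paper quotes this result from \cite[Theorem 5.7]{GDIT}, where it is established by an explicitly constructive method working directly with differential Chow coordinates (for $g=1$ the Poisson-type factorization in Theorem~\ref{th-chowproperty}(3) has a single factor, and the defining equations of the cycle can be written down from the coefficients of $F$; the paper notes that this method does not extend to general $g$). What you have done instead is specialize the paper's own general argument --- the proof of Theorem~\ref{deltachowexists}, valid for arbitrary $g$ --- to the case $g=1$, after first observing (correctly) that $g=\sum s_i g_i$ with every $g_i\geq 1$ forces $\VB$ to be a single irreducible variety with multiplicity one. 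That reduction is the \emph{only} place $g=1$ is genuinely used. Your recurring suggestion that the linearity of $F$ in $u_{i0}^{(h)}$ is what ``keeps the eliminations harmless'' is a red herring: every step you invoke (the characteristic-set shape of Lemma~\ref{lem-charsetprolongationadm}, the order-$h$ elimination ideal of Lemma~\ref{lem-chowexpression}, the definability statements of Fact~\ref{factdef}, Lemma~\ref{prolongadmissible-definable}, and the degree bound of Proposition~\ref{irr-degreebound}) is a bounded-order ACF-definability statement valid for every $g$; the Ritt problem is circumvented by routing through $B_h(V)$, not by $g=1$. So your route buys uniformity in $g$ (it is the paper's main theorem), while the route actually used for this statement in \cite{GDIT} buys explicit defining equations for the $g=1$ case.

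One genuine soft spot is your universal family. You propose to attach to a Chow coordinate the set $\V(\sat(\mathcal B(b)))$ for the first-column characteristic set $\mathcal B(b)$. Deciding membership in $\V(\sat(\mathcal B))$ at points where the separants of $\mathcal B$ vanish is precisely the open problem the paper flags in its discussion of the Ritt problem, so it is not clear that $\{(x,b)\,:\,x\in\V(\sat(\mathcal B(b)))\}$ is differentially constructible uniformly in $b$. This does not affect the statement as written --- existence of $\delta$-$\chow(n,d,h,1,m)$ only requires representability of the functor, which your bijection between $\mathcal C$ and $\Cc_{(n,d,h,1,m)}(\mc U)$ delivers --- but the family-of-cycles claim would need a different description of the fibers, e.g.\ via the prolongation sequence or via the equations explicitly recoverable from $F$ in the $g=1$ case.
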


The above theorem was proved with constructive methods. 
But that method does not apply to the general case (when $g$ is arbitrary) and the existence of differential Chow varieties was listed as an open problem in \cite{GDIT}. 
In section 5, we will give a positive answer to the very problem using model theoretical methods, namely, showing that the differential Chow varieties exist for general cases.

\begin{rem} \label{restrictedindex}
In the algebraic setting, for an arbitrary tuple $(d,m)$, $\chow_n(d,m)$ is always a nonempty constructible set.
However, it is more subtle in the differential case and  $\Cc_{(n,d,h,g,m)}$ may be empty for certain values $(n, d,h,g,m)$. 
For example, when a differential algebraic cycle is of order 1, its differential degree is at least $2$, so $\Cc_{(n,d,1,g,1)}=\emptyset$. 
\end{rem}

\section{Degree bound for prolongation sequences} \label{confine}
We are interested in the space of all differential cycles in $n$ dimensional affine space of some fixed index $(d,h,g,m)$. 
Ultimately, the point in our parameter space corresponding to  a differential cycle $\sum_{i}a_iV_i$ will be given by the point representing $\sum_ia_iB_h (V_i) $ in an appropriate algebraic Chow variety. 
In order to ensure that the space of such algebraic varieties has the structure of a definable set, 
we must establish degree bounds for the corresponding algebraic cycles. This is the topic of the present section.

\begin{prop} \label{irr-degreebound} 
Suppose $V$ is an irreducible differential variety of index $(d,h,g,m)$ in  $ \m A^n$. Then there is a natural number $D$ depending only on $(d,h,g,m)$ such that $B_h(V)  \subseteq   \tau_h \m A^{n}$ is an irreducible algebraic variety with degree satisfying  
$\deg(B_h(V))\leq D$.
\end{prop}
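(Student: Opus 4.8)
The plan is to bound $\deg(B_h(V))$ by extracting from the differential Chow form of $V$ a characteristic set of $\mathbb I(V)$ of \emph{bounded} order and degree, prolonging it to an algebraic characteristic set of $I(B_h(V))$, and then invoking a B\'ezout-type degree estimate. The easy part comes first: since $V$ is an irreducible differential variety, the sequence $(B_\ell(V))_{\ell\ge 0}$ is an irreducible prolongation sequence (Section~\ref{sec-prolongation}), so $B_h(V)\subseteq\tau_h\m A^n$ is an irreducible algebraic variety; moreover $I(B_h(V))=\mathbb I(V)\cap K\{y_1,\ldots,y_n\}_{\le h}$ is prime, and by Lemma~\ref{def-order-sadik} it has dimension $d(h+1)+h$, which depends only on $(d,h)$. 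All of the work is in the uniform degree bound.

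For this I would pass to the differential Chow form $F=F(\bu_0,\ldots,\bu_d)$ of $V$. By Theorem~\ref{th-chowproperty}, $F$ has order $h$ and is differentially homogeneous of degree $m$ in each of the $d+1$ blocks $\bu_i$; hence, regarded as an ordinary polynomial in the $(d+1)(n+1)(h+1)$ variables $u_{ij}^{(k)}$ ($0\le i\le d$, $0\le j\le n$, $0\le k\le h$), it is homogeneous of degree $(d+1)m$. By the reconstruction theorem~\cite[Theorem 4.45]{GDIT} one recovers from $F$ a characteristic set $\mathcal A$ of $\mathbb I(V)$ under an orderly ranking, and the degrees of the members of $\mathcal A$ are bounded by a number $D_1$ depending only on $\deg(F)=(d+1)m$ and on $n,h$; by Lemma~\ref{def-order-sadik} each $f\in\mathcal A$ has $\ord(f)\le h$ and $\card(\mathcal A)=n-d$. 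Thus $D_1$ depends only on $(n;d,h,g,m)$, hence --- $n$ being fixed throughout --- only on $(d,h,g,m)$.

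Now I would prolong: by Remark~\ref{remark-algdiffcharset} the set $\mathcal C:=\{\delta^k f~:~f\in\mathcal A,\ 0\le k\le h-\ord(f)\}$ is an algebraic characteristic set of the prime ideal $\mathbb I(V)\cap K\{y\}_{\le h}=I(B_h(V))$. Since differentiation does not raise the total degree of a polynomial, every member of $\mathcal C$ has degree at most $D_1$, and $\card(\mathcal C)=(h+1)(n-d)-h$. Therefore $B_h(V)$ is an irreducible component of the algebraic set $V(\mathcal C)\subseteq\tau_h\m A^n=\m A^{n(h+1)}$, which is cut out by at most $(h+1)(n-d)$ polynomials of degree at most $D_1$; by the degree bound for algebraic varieties defined by equations of bounded degree (\cite[Proposition 3]{Heintz} together with the B\'ezout estimate used there), the sum of the degrees of the irreducible components of $V(\mathcal C)$ --- in particular $\deg(B_h(V))$ --- is at most $D_1^{\,n(h+1)}$. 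Taking $D:=D_1^{\,n(h+1)}$ finishes the proof.

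The step I expect to be the main obstacle is the appeal to~\cite[Theorem 4.45]{GDIT}: the whole argument rests on the fact that a characteristic set of $\mathbb I(V)$ can be read off from the differential Chow form $F$ with the degrees of the recovered polynomials controlled solely by $\deg(F)$ (equivalently by $m$ and $d$), and not by $V$ itself. One can repackage this model-theoretically --- the assignment $F\mapsto\mathcal A\mapsto\mathcal C\mapsto B_h(V)$ is then carried out by operations definable over $\mathrm{ACF}$ (reconstruction from $F$, forming derivatives, extracting the relevant component via Fact~\ref{factdef}), so $\deg(B_h(V))$ becomes a definable function on a definable family of polynomials and is automatically bounded --- but this reformulation relies on exactly the same underlying finiteness, namely that no unbounded degree is needed to undo the passage to the Chow form.
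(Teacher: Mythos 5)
Your overall architecture (irreducibility of $B_h(V)$ from primality of $\mathbb I(V)\cap K\{x\}_{\le h}$; the differential Chow form as the sole carrier of uniformity, since Theorem~\ref{th-chowproperty} pins its order at $h$ and its total degree at $(d+1)m$; then a B\'ezout estimate) is the right one, and indeed matches the spirit of the paper's effective version (Proposition~\ref{lonelybound}). But the step you yourself flag is a genuine gap, and it is load-bearing. Theorem 4.45 of~\cite{GDIT} does not return a characteristic set of $\mathbb I(V)$: as its algebraic analog is used in Lemma~\ref{lem-chowtoequation}, it returns two finite sets $S_1,S_2$ of bounded degree and order such that $V=\overline{\V(S_1)\setminus\V(S_2)}$ (Kolchin closure). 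That is strictly weaker than a characteristic set, and the difference is exactly where the difficulty lives: from such data, $B_h(V)$ need \emph{not} be an irreducible component of the algebraic variety $V(S_1)\subseteq\tau_h\m A^n$, so B\'ezout applied to $S_1$ bounds nothing. (Compare Example~\ref{ex-1}: $V(x_1',x_2'')\subset\tau_2\m A^2$ is irreducible of dimension $4$, while $B_2(\mathbb V(x_1',x_2''))=V(x_1',x_1'',x_2'')$ is a proper subvariety of it.) Worse, the existence of a characteristic set of $\mathbb I(V)$ with degrees bounded in terms of the index alone is essentially \emph{equivalent} to the proposition being proved --- one direction is your Remark~\ref{remark-algdiffcharset}-plus-B\'ezout argument, the other is Lemma~\ref{lem-charsetprolongationadm} together with degree bounds for characteristic sets of a prime ideal of bounded degree --- so assuming it begs the question. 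Your fallback remark that the assignment $F\mapsto\mathcal A\mapsto\mathcal C$ is ``definable over ACF'' inherits the same problem: passing from generators of a differential ideal to its Kolchin closure or to a characteristic set is not known to be definable in families; that is the Ritt problem discussed in Section~\ref{background}.

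The rest of your argument is sound and worth keeping: given a bounded-degree differential characteristic set $\mathcal A$ of $\mathbb I(V)$, each $\ord(f)\le h$ by Lemma~\ref{def-order-sadik}, the prolonged chain $\mathcal C$ is an algebraic characteristic set of $I(B_h(V))$ by Remark~\ref{remark-algdiffcharset}, $I(B_h(V))=\asat(\mathcal C)$ is a minimal prime over $(\mathcal C)$ (its initials are reduced, hence outside the prime, so any intermediate prime already contains the saturation), and B\'ezout finishes. To close the gap you need to obtain the bounded-degree characteristic set from $F$ honestly. The paper does this one level up: over $K_1=K\langle\bu\rangle$ the ideal $\mathcal J=[\mathbb I(V),L_0,\ldots,L_d]$ \emph{does} have an explicit characteristic set $\{F\}\cup\{g_{jk}\}$ read off from $F$ (via~\cite[Theorem 4.36]{GDIT}), one bounds $\deg(\mathcal J^{[h]})$ by B\'ezout, and then uses that degree does not increase under the coordinate projection eliminating the $u$-variables to bound $\deg(B_h(V))$. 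Alternatively (the paper's proof of this very proposition), one uses the birational map $W\to V_{K_1}$ from the Chow-form hypersurface~\cite[Theorem 4.13]{GDIT}, quantifier elimination and compactness in $\operatorname{DCF}_0$ to bound the data defining the image, and~\cite[Remark 3.2]{HPnfcp} to bound the degree of its Zariski closure. Either repair keeps your skeleton but replaces the unsupported appeal to Theorem 4.45.
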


\begin{proof} 
The irreducibility of $B_h(V)$ follows from the fact that $B_h(V)= V\big(\m I (V)\cap K\{ x_1, \ldots , x_n \} _{\leq h}\big)$. It remains for us to show that there is $ D$ with the claimed properties. 

Suppose $F(\bu_0,\ldots,\bu_d)$ is the differential Chow form of $V$
 where $\bu_i=(u_{i0},\ldots,u_{in})$ $\,(i=0,\ldots,d)$.
Let $\bu$ be the tuple of variables $(u_{ij})_{i=0,j=1}^{d,n}$.  That is, 
we are omitting the variables of the form $u_{i0}$.  Set $K_1=K\langle\bu\rangle$.   
Let $W$ be the differential variety in $\m A^{d+1}$ defined by $\sat(F)$ 
considered as a differential ideal in $K_1\{u_{00},\ldots,u_{d0}\}$.
Then by Theorem~\ref{th-chowproperty}, $B_h(W)=V(F) \subseteq   \tau_h\m A^{d+1}$  
is an irreducible variety.

By \cite[Theorem 4.13]{GDIT}, 
the map given by $$f( \bu_0) = ( \pd{F}{u_{0i}^{(h)}} / S_F)_{i=1}^n$$ gives a 
differential birational map from $W$ to $V_{K_1}$, the base change of $V$ to $K_1$.
By quantifier elimination in $\operatorname{DCF}_0$,
the image is given by the vanishing and non 
vanishing of some collection of differential polynomials.  By the compactness 
theorem, the number, degree and 
order of these equations and inequations must be bounded uniformly depending only on the degrees, orders, and 
number of variables of $F$ and $f$. The results of~\cite{HPnfcp} (see Remark 3.2) give a uniform upper bound, $D$ for the degree of $B_h(V)$.
\end{proof} 

\begin{corr} \label{degreebound}
Suppose $\VB=\sum_{i}a_iV_i \subseteq  \m A^n\,(a_i\in\m Z_{\geq 0})$ is an order-unmixed differential variety of index $(d,h,g,m)$. 
Then there is a natural number $D$ such that $\sum_{i}a_iB_h(V_i)$ is an algebraic cycle in $\tau_h \m A^n$ 
of  dimension $d(h+1)+h$ and degree satisfying  
$\deg(B_h(V))\leq D$
\end{corr}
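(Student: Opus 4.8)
The plan is to deduce this from Proposition \ref{irr-degreebound} by additivity of both dimension and degree over the components of the cycle, together with a uniform bound on the number of components. First I would recall that $\VB=\sum_i a_i V_i$ has index $(d,h,g,m)$, so by definition the differential degree $\sum_i a_i m_i = m$ and the leading differential degree $\sum_i a_i g_i = g$; since each $m_i \geq 1$ and each $a_i \geq 1$, the number of summands $\ell$ (counted with multiplicity) is bounded by $m$, hence a fortiori the number of distinct irreducible components $V_i$ is at most $m$. Each $V_i$ is then an irreducible differential variety; because the cycle is order-unmixed of index $(d,h,g,m)$, each $V_i$ has differential dimension $d$ and order $h$ (its leading differential degree $g_i$ and differential degree $m_i$ need not equal $g,m$, but they are bounded by $g$ and $m$ respectively, so each $V_i$ has index $(d,h,g_i,m_i)$ with $g_i \leq g$, $m_i \leq m$).

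Next I would apply Proposition \ref{irr-degreebound} to each $V_i$: for each pair $(d,h,g_i,m_i)$ with $g_i \leq g$ and $m_i \leq m$ there is a natural number $D_{(d,h,g_i,m_i)}$ bounding $\deg(B_h(V_i))$; taking the maximum $D' := \max\{ D_{(d,h,g',m')} : g' \leq g,\ m' \leq m \}$ over the finitely many relevant indices gives a single bound $\deg(B_h(V_i)) \leq D'$ valid for every component, depending only on $(d,h,g,m)$. For the dimension statement: $V_i$ has differential dimension $d$ and order $h$, so by Lemma \ref{def-order-sadik} its Kolchin polynomial is $d(t+1)+h$, and since the order of a characteristic set of $V_i$ under the orderly ranking is bounded by $h$, we have $\dim(B_h(V_i)) = \omega_{V_i}(h) = d(h+1)+h$ for all components; hence the algebraic cycle $\sum_i a_i B_h(V_i)$ is pure of dimension $d(h+1)+h$ in $\tau_h\m A^n$.

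Finally, setting $D := (\sum_i a_i) \cdot D' \leq m \cdot D'$, additivity of degree over effective cycles gives $\deg\big(\sum_i a_i B_h(V_i)\big) = \sum_i a_i \deg(B_h(V_i)) \leq m D'$, so $D$ depends only on $(d,h,g,m)$ as required. (Here I am writing $B_h(\VB)$ as shorthand for the cycle $\sum_i a_i B_h(V_i)$, matching the notation in the statement.) The only mild subtlety — and the step I would be most careful about — is the passage from ``index $(d,h,g,m)$ of the cycle'' to ``uniformly bounded index of each component'': one must note that while $d$ and $h$ are genuinely common to all components (order-unmixedness plus the definition of dimension and order of a cycle), the invariants $g_i, m_i$ of a single component are only bounded by $g,m$, so one invokes Proposition \ref{irr-degreebound} over the finite set of admissible component-indices and takes a maximum. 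Everything else is bookkeeping with the additivity of $\deg$ and $\dim$ over effective algebraic cycles.
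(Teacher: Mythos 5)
Your argument is correct and is essentially the intended one: the paper states this corollary without proof as an immediate consequence of Proposition \ref{irr-degreebound}, and the deduction you give (bounding the number of components by $m$ via $m_i\geq 1$, applying the proposition componentwise with a maximum over the finitely many admissible component-indices $(d,h,g_i,m_i)$ with $g_i\leq g$, $m_i\leq m$, and using additivity of degree together with the Kolchin polynomial $d(t+1)+h$ evaluated at $t=h$ for the dimension) is exactly the bookkeeping the authors leave to the reader. Your flagged subtlety --- that only $d$ and $h$, not $g$ and $m$, pass unchanged to the components --- is the right thing to be careful about, and you handle it correctly.
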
 

It is possible to give effective versions of Proposition~\ref{irr-degreebound} and Corollary~\ref{degreebound} with a more complicated proof; the following proposition gives such detailed effective bounds. In the following section of the paper, we will use Corollary~\ref{degreebound} to restrict the space of algebraic Chow varieties which we consider. A more detailed analysis of the particular defining equations of the differential Chow variety might be undertaken by applying the more detailed effective bounds of the following Proposition (or improving upon them), but the main thrust of our results in the next section concerns the \emph{existence} of differential Chow varieties, so the following result is primarily given to indicate that the construction of differential Chow varieties can be made effective \emph{in principle}. 

\begin{prop} \label{lonelybound} 
Suppose $V$ is an irreducible differential variety of index $(d,h,g,m)$ in  $ \m A^n$. Then $B_h(V)  \subseteq   \tau_h \m A^{n}$ is an irreducible algebraic variety with degree satisfying  
\begin{equation} \label{deg-irr}
\max\{g, m/(h+1)\}\leq \deg(B_h(V))\leq [(d+1)m]^{nh+n+1}.
\end{equation}
\end{prop}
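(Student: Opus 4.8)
The plan is to prove the two inequalities in \eqref{deg-irr} separately, working throughout with the differential Chow form $F(\bu_0,\ldots,\bu_d)$ of $V$ and the algebraic expression for it furnished by Lemma~\ref{lem-chowexpression}, namely $(F) = \big(I(B_h(V)), L_0^{[h]}, \ldots, L_d^{[h]}\big) \cap K[\bu_0^{[h]},\ldots,\bu_d^{[h]}]$. The key numerical facts I would invoke from Theorem~\ref{th-chowproperty} are: $\ord(F) = h$, $F$ is differentially homogeneous of degree $m$ in each $\bu_i$ (so the total degree of $F$ in the finite variable set $\bu_0^{[h]},\ldots,\bu_d^{[h]}$ is roughly $(d+1)(h+1)m$), the leading differential degree $g = \deg(F, u_{00}^{(h)})$ equals the cardinality of the zero-dimensional set $B_h(V) \cap V(L_1^{(h)},\ldots,L_d^{(h)}, L_0^{(h-1)})$, and the Poisson-type product formula $F = A \prod_{\tau=1}^g (u_{00} + u_{01}\xi_{\tau 1} + \cdots + u_{0n}\xi_{\tau n})^{(h)}$.

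For the lower bound, I would argue in two pieces. First, $g \leq \deg(B_h(V))$: since $g$ is the number of points in the intersection of $B_h(V)$ with the $d(h+1)+1$ hyperplanes cut out by $L_1^{(h)},\ldots, L_d^{(h)}, L_0^{(h-1)}$ in $\tau_h\m A^n = \m A^{n(h+1)}$, and $B_h(V)$ has dimension $d(h+1)+h$ so that intersecting with generic hyperplanes of these orders produces a finite set whose cardinality is at most the degree of $B_h(V)$ (by Bézout applied iteratively, or directly by the definition of degree as the generic number of intersection points with a linear space of complementary dimension — here we intersect with a linear space of dimension $h$, but by genericity of the $u$'s the count is bounded by $\deg B_h(V)$); this gives $g \leq \deg(B_h(V))$. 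Second, $m/(h+1) \leq \deg(B_h(V))$: the differential degree $m$ of $V$ equals $\deg(F,\bu_i)$ for any $i$; using the algebraic description of $(F)$ from Lemma~\ref{lem-chowexpression} as an elimination ideal, $F$ is a polynomial on the Chow-form side whose degree is controlled by the degree of $B_h(V)$ together with the $(h+1)$ linear forms $L_i^{[h]}$ — eliminating the $y$-variables from the ideal generated by $I(B_h(V))$ (degree $\leq \deg B_h(V)$) and the linear forms produces a polynomial of degree at most $\deg(B_h(V))$ in each block $\bu_i^{[h]}$ — more carefully, summing over the $h+1$ derivative levels of each $L_i$, the degree of $F$ in $\bu_i$ is at most $(h+1)\deg(B_h(V))$, hence $m \leq (h+1)\deg(B_h(V))$, which rearranges to the claimed bound.

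For the upper bound $\deg(B_h(V)) \leq [(d+1)m]^{nh+n+1}$, the idea is to recover $B_h(V)$ from the Chow form by an effective elimination. The Chow form $F$ has degree at most $(d+1)(h+1)m$ in its $(d+1)(n+1)(h+1)$ variables (being differentially homogeneous of degree $m$ in each of $d+1$ blocks, each block having $(n+1)(h+1)$ variables after taking the $h$ jets). By the cited algebraic analog of \cite[Theorem 4.45]{GDIT} (also \cite[p.~51]{Hodgevol2}), the defining equations of $B_h(V) \subseteq \tau_h\m A^n = \m A^{n(h+1)}$ can be written down from $F$ via determinantal/resultant-type constructions applied to $F$ and its partial derivatives. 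The degree of the resulting equations is polynomial in $\deg F$ with the exponent governed by the number of variables being eliminated, which is of order $n(h+1)$; tracking the exponents through the construction — and using $h+1 \leq$ something absorbed into the base, with $d+1$ and $m$ the essential parameters — yields a bound of the stated shape $[(d+1)m]^{nh+n+1}$. The cleanest route is to combine the degree bound on $F$ with a bound for elimination of $n(h+1)$ variables from a system of polynomials of degree $D_0 := (d+1)(h+1)m$, which by the usual resultant estimates gives $D_0^{O(n(h+1))}$, and then verify the exponent in the statement dominates this.

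The main obstacle I anticipate is the upper bound: extracting an explicit, clean exponent $nh+n+1$ requires care in how one recovers $B_h(V)$ from $F$ and which elimination estimate one uses, since different constructions (resultants, Gröbner-free elimination à la \cite{HPnfcp}, or the explicit formulas of \cite[Theorem 4.45]{GDIT}) give somewhat different bookkeeping, and one must check that the slack absorbed by writing $(h+1)$-dependence into the base $[(d+1)m]$ rather than the exponent is actually legitimate — i.e., that $h$ does not secretly need to appear more than linearly in the exponent. The lower bound, by contrast, I expect to be essentially immediate from Theorem~\ref{th-chowproperty}(4) and Lemma~\ref{lem-chowexpression} once the Bézout-type counting is set up correctly; the only subtlety there is making sure the genericity of the $u_{ij}$ is exploited so that the relevant intersection numbers are bounded by $\deg(B_h(V))$ rather than by a product of degrees.
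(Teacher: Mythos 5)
Your lower bound is essentially the paper's argument and is fine modulo small bookkeeping slips: the intersection giving the $g$ points is with $d(h+1)+h$ hyperplanes (the forms $L_0^{(i)}$ for $i<h$ and $L_1^{(i)},\ldots,L_d^{(i)}$ for $i\leq h$), not $d(h+1)+1$ of them, and the bound $m/(h+1)\leq\deg(B_h(V))$ comes out exactly as in the paper from $\deg(F)=(d+1)m$ together with $\deg(F)\leq(h+1)(d+1)\deg(B_h(V))$, the latter obtained by specializing the algebraic Chow form of $B_h(V)$ (which has $(d+1)(h+1)$ blocks of coefficient variables, each of degree $\deg(B_h(V))$).

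The upper bound is where there is a genuine gap, and you have correctly identified it yourself: a generic "eliminate $n(h+1)$ variables from polynomials of degree $D_0$" estimate with an unspecified $O(n(h+1))$ exponent does not produce the stated bound, and your starting degree $D_0=(d+1)(h+1)m$ is already an overcount --- differential homogeneity of degree $m$ in each block $\bu_i$ means $F$ has degree $m$ in the variables $\bu_i^{[h]}$ collectively, so $\deg(F)=(d+1)m$ with no factor of $h+1$. The missing idea is the specific system that recovers $B_h(V)$ from $F$: from the proof of Theorem 4.36 of the cited work one has, for each $1\leq j\leq n$ and $0\leq k\leq h$, a polynomial
$g_{jk}=\frac{\partial F}{\partial u_{00}^{(h)}}x_j^{(k)}+\sum_{\ell=1}^k\binom{h-\ell}{k-\ell}\binom{h}{k}^{-1}\frac{\partial F}{\partial u_{00}^{(h-\ell)}}x_j^{(k-\ell)}-\frac{\partial F}{\partial u_{0j}^{(h-k)}}$
lying in $\mathcal{J}=[\m I(V),L_0,\ldots,L_d]$, each linear in the $x$-variables it involves and of total degree at most $\deg(F)=(d+1)m$. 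One checks that $\{F\}\cup\{g_{jk}\}$ is an algebraic characteristic set of $\mathcal{J}^{[h]}=\mathcal{J}\cap K_1\{x_1,\ldots,x_n,u_{00},\ldots,u_{d0}\}_{\leq h}$, so the variety of $\mathcal{J}^{[h]}$ is a component of the zero set of these $n(h+1)+1$ polynomials; the B\'ezout inequality of Heintz then gives $\deg(\mathcal{J}^{[h]})\leq[(d+1)m]^{n(h+1)+1}=[(d+1)m]^{nh+n+1}$, and projecting onto the $x^{[h]}$-coordinates (which does not increase degree) yields the bound for $B_h(V)$. It is this explicit count --- exactly $nh+n+1$ hypersurfaces, each of degree at most $(d+1)m$ --- that produces the exponent; without it your route cannot be completed as written.
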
 

\begin{proof} Suppose $F(\bu_0,\ldots,\bu_d)$ is the differential Chow form of $V$
 where $\bu_i=(u_{i0},\ldots,$ $u_{in})$ $\,(i=0,\ldots,d)$.
Let $\bu= (u_{ij})_{i=0,j=1}^{d,n}$ and $K_1=K\langle\bu\rangle$.
    
Let $\mathcal{J}=[\m I(V),L_0,\ldots,L_d] \subseteq  K_1\{x_1, \ldots, x_n, u_{00},\ldots,u_{d0}\}$.
Then by the proof of \cite[Theorem 4.36]{GDIT},
the polynomials $g_{jk}=\frac{\partial
F}{\partial u_{00}^{(h)}}x_{j}^{(k)}+\sum_{\ell=1}^k {h-\ell\choose
k-\ell}/{h\choose k}\frac{\partial F}{\partial
u_{00}^{(h-\ell)}}x_{j}^{(k-\ell)}-\frac{\partial F}{\partial
u_{0j}^{(h-k)}}\,(j=1,\ldots,n;k=0,\ldots,h)$ are contained in $\mathcal{J}$.
Fix an ordering of algebraic indeterminates so that $x_1 < \cdots < x_n < x_1^{(1)} < \cdots < x_n^{(1)} < \cdots < x_1^{(h)} < \ldots < x_n^{(h)}$
and $u_{ij}^{(k)} < x_\ell^{(m)}$ for all $i, j, k, \ell,$ and $m$.

Let $\mathcal{J}^{[h]}:=\mathcal{J}\cap K_1 \{x_1, \ldots, x_n, u_{00},\ldots,u_{d0} \}_{\leq h}$. Since for each $f\in \mathcal{J}^{[h]}$,
 the algebraic remainder of $f$ with respect to $g_{jk}$ is a polynomial in $\mathcal{J}\cap K_1 \{ u_{00},\ldots,u_{d0} \}_{\leq h} = (F)$,
$\{ F \} \cup  \{ g_{jk} ~:~ 1\leq j \leq n,  0\leq k\leq h \ \}$ constitutes an algebraic  characteristic set of $\mathcal{J}^{[h]}$. Thus,
 $$\mathcal{J}^{[h]} = \big(F,(g_{jk})_{1\leq j\leq n; 0\leq k\leq h}\big):\big(\frac{\partial F}{\partial u_{00}^{(h)}}\big)^{\infty}.$$

Since the variety defined by the ideal $\big(F,(g_{jk})_{1\leq j\leq n; 0\leq k\leq h}\big):\big(\frac{\partial F}{\partial u_{00}^{(h)}}\big)^{\infty}$ is a component of the closed set given by the vanishing of $\big(F,(g_{jk})_{1\leq j\leq n; 0\leq k\leq h}\big)$, by~\cite[Theorem 1]{Heintz},
 \begin{eqnarray*} 
\deg(\mathcal{J}^{[h]}) & \leq & \deg\big(\big(F,(g_{jk})_{1\leq j\leq n; 0\leq k\leq h}\big)\big) \\
&\leq & \deg(F)^{n(h+1)+1}\leq [(d+1)m]^{nh+n+1}. \end{eqnarray*}
Since $\mathcal{J}^{[h]}\cap K_1 \{ x_1, \ldots, x_n \}_{\leq h} 
=I(B_h(V)_{K_1})$,
by~\cite{Heintz,li2011sparse}, $\deg(B_h(V))\leq\deg(\mathcal{J}^{[h]})$. 
Hence, $\deg(B_h(V)) \leq [(d+1)m]^{nh+n+1}$.

Since $\dim(B_h(V))=d(h+1)+h$ and by~\cite{GDIT}, $B_h(V)$ and some $d(h+1)+h$ hyperplanes defined by $L_0^{(i)}$ for 
$0 \leq i < h$ and  $L_1^{(i)},\ldots,L_d^{(i)}$ for $0 \leq i \leq h$ 
intersect in $g$ points, $\deg(B_h(V))\geq g$.
On the other hand, $F$ can be obtained from the algebraic Chow form of $B_h(V)$ 
using the strategy of specializations in~\cite[Theorem 4.2]{li2011sparse}. 
So $\deg(F)\leq (h+1)(d+1)\deg(B_h(V))$ and $\deg(F)=m (d+1)$.
Thus, (\ref{deg-irr}) follows.
\end{proof} 

\section{On the existence of differential Chow varieties} \label{deltaChow-1}

In this section, we will show that for a fixed $n \in \m N$ and 
a fixed index $(d,h,g,m)$, $\Cc_{(n,d,h,g,m)}$ is represented by a differentially constructible set.
That is, the differential Chow variety $\delta \text{-} \chow(n,d,h,g,m)$ exists.

Consider the disjoint union of algebraic constructible sets $$\mathcal{C}=\bigcup_{e \leq D}\chow_{n(h+1)}(d(h+1)+h,e)$$ 
where $D$ is the bound of Corollary~\ref{degreebound} and  $\chow_{n(h+1)}(d(h+1)+h,e)$ is the affine algebraic Chow variety of index $(d(h+1)+h,e)$ in $\tau_h\mathbb{A}^n$ as defined in Proposition 3.4.
So each point $a\in \mathcal{C}$ represents an algebraic cycle of the form $\sum_{i}t_iW_i$, where each $W_i$ is an irreducible  variety in $\tau_h\mathbb{A}^n$. Moreover, for a fixed $i$, if $W_i$ is prolongation admissible, by Lemma \ref{uniquegenericomponent}, the differential variety corresponding to the prolongation sequence generated by $W_i$  
has a unique dominant component $V_i\subseteq\mathbb A^n$.

Let $\mathcal{C}_1$ be the subset consisting of all points $a \in\mathcal{C}$ such that 
\begin{enumerate}
\item $a$  is the Chow coordinate of an algebraic cycle $\sum_{i}t_iW_i$ where each $W_i$  is irreducible and prolongation admissible and

\item for each $i$, the unique  dominant component of the differential variety corresponding to the prolongation sequence generated by $W_i$ is of index $(d,h,g_i,m_i)$ and $\sum_it_ig_i=g, \sum_it_im_i=m$.
\end{enumerate}

\begin{thm}\label{deltachowexists}
The set $\mc C_1$ is differentially constructible and 
the map which associates a differential algebraic cycle ${\mathbf V} = \sum s_i V_i$ of 
index $(d,h,g,m)$ in $\m A^n$ with the Chow coordinate of 
the algebraic cycle $\sum s_i B_h(V_i)$ identifies $\Cc_{(n,d,h,g,m)}$ with 
$\mc C_1$.  In particular, the differential Chow variety $\delta \text{-} \chow(n,d,h,g,m)$ 
exists.
\end{thm}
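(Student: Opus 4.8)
The plan is to establish the statement in two halves: that the map $\Phi$ sending a differential cycle $\mathbf V=\sum s_iV_i$ of index $(d,h,g,m)$ to the algebraic Chow coordinate of $\sum s_iB_h(V_i)$ is a bijection $\Cc_{(n,d,h,g,m)}(k)\to\mc C_1(k)$ natural in the differential field $k$, and that $\mc C_1$ is differentially constructible; together these exhibit $\mc C_1$ as a representing object for $\Cc_{(n,d,h,g,m)}$, so we may put $\delta\text{-}\chow(n,d,h,g,m):=\mc C_1$. I would start with well-definedness. If $\mathbf V$ has index $(d,h,g,m)$ then each $V_i$ is irreducible of differential dimension $d$ and order $h$, so by the Kolchin polynomial computation (Lemma~\ref{def-order-sadik}) $B_h(V_i)\subseteq\tau_h\m A^n$ is irreducible of dimension $d(h+1)+h$, it is prolongation admissible (Remark~\ref{rk-pro-admissible}), and by Corollary~\ref{degreebound} the cycle $\sum s_iB_h(V_i)$ has degree at most $D$, hence represents a point of $\mc C$. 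The essential observation is that $V_i$ is \emph{exactly} the unique dominant component of the differential variety $W^{(i)}$ attached to the prolongation sequence generated by $B_h(V_i)$: indeed $V_i\subseteq W^{(i)}$, and if $W^{(i)}_j$ is a component of $W^{(i)}$ containing $V_i$, then $B_h(V_i)\subseteq B_h(W^{(i)}_j)\subseteq B_h(W^{(i)})=B_h(V_i)$ forces $W^{(i)}_j$ to be dominant; by Lemma~\ref{uniquegenericomponent} the dominant component is unique, and by Remark~\ref{rem-uniquegenericomponent} it has differential dimension $d$ and order $h$, so the inclusion $V_i\subseteq W^{(i)}_j$ of irreducible differential varieties with equal Kolchin polynomial is an equality. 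Hence the dominant component has index $(d,h,g_i,m_i)$ with $g_i,m_i$ the leading differential degree and differential degree of $V_i$, and $\sum s_ig_i=g$, $\sum s_im_i=m$ by Definition~\ref{def-index}; so $\Phi(\mathbf V)\in\mc C_1$.

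For bijectivity, injectivity follows from the uniqueness of the decomposition of an effective algebraic cycle into irreducible components: if $\Phi(\mathbf V)=\Phi(\mathbf V')$ then after reindexing $B_h(V_i)=B_h(V'_i)$ and $s_i=s'_i$, whence $V_i=V'_i$ since each is the unique dominant component of the prolongation sequence generated by $B_h(V_i)$. For surjectivity onto $\mc C_1$, given $a$ representing $\sum_it_iW_i$ with the $W_i$ irreducible and prolongation admissible, let $V_i$ be the unique dominant component of the differential variety attached to the prolongation sequence generated by $W_i$; condition (2) gives that $V_i$ has index $(d,h,g_i,m_i)$ with $\sum t_ig_i=g$ and $\sum t_im_i=m$, the $V_i$ are distinct because the $W_i=B_h(V_i)$ are, and $\mathbf V:=\sum t_iV_i$ is then a differential cycle of index $(d,h,g,m)$ with $\Phi(\mathbf V)=[\sum t_iB_h(V_i)]=[\sum t_iW_i]=a$. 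Naturality in $k$ is immediate since both $B_h$ and the formation of algebraic Chow coordinates commute with base change of differential fields; this already shows $\mc C_1$ represents $\Cc_{(n,d,h,g,m)}$.

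The remaining, more technical, task is to show $\mc C_1$ is differentially constructible. By Proposition~\ref{prop-chowconstructible}, $\mc C$ is a finite union of algebraic constructible sets, so it suffices to verify that conditions (1) and (2) are definable in families over $\mc C$. From $a\in\mc C$ one recovers its decomposition and the family of components $W_i$ with multiplicities $t_i$ definably (up to reordering, absorbed via symmetric functions) by Fact~\ref{factdef}(\ref{compmap}) and Lemma~\ref{lem-chowtoequation}; that each $W_i$ is prolongation admissible is definable by Lemma~\ref{prolongadmissible-definable}, which handles (1). For (2), Remark~\ref{rem-uniquegenericomponent} expresses the differential dimension of the dominant component $V_i$ as $\dim(W_i)-\dim(\overline{\pi_{h,h-1}(W_i)})$ and its order as $\dim(W_i)-(h+1)\cdot\delta\text{-}\dim(V_i)$, both definable since algebraic dimension of Zariski closures is definable in families (Fact~\ref{factdef}(\ref{clodef}),(\ref{dimdegdef})); thus ``$\delta\text{-}\dim(V_i)=d$'' (which, together with $\dim(W_i)=d(h+1)+h$, forces the order to equal $h$) is definable. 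Since $W_i=B_h(V_i)$, Lemma~\ref{lem-chowexpression} identifies the differential Chow form $F_i$ of $V_i$ as a generator of the principal ideal $\bigl(I(W_i),L_0^{[h]},\ldots,L_d^{[h]}\bigr)\cap K[\bu_0^{[h]},\ldots,\bu_d^{[h]}]$; this elimination, and the extraction of the generator $F_i$, are definable in families (Fact~\ref{factdef}(\ref{clodef})), after which $g_i=\deg(F_i,u_{00}^{(h)})$ and $m_i=\deg(F_i)/(d+1)$ are definable by Fact~\ref{factdef}(\ref{dimdegdef}),(\ref{hypdegvardef}) and Theorem~\ref{th-chowproperty}. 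Finally ``$\sum_it_ig_i=g$'' and ``$\sum_it_im_i=m$'' are finitely many equalities among integers bounded in terms of $(d,h,g,m)$ (as $\sum_it_i\deg(W_i)\le D$), hence definable. Conjoining all these conditions shows $\mc C_1$ is differentially constructible.

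I expect the definability analysis of condition (2) to be the main obstacle: one must extract \emph{every} differential invariant of the dominant component $V_i$ — differential dimension, order, leading differential degree, differential degree, and ultimately its differential Chow form, hence its defining differential ideal — from the single algebraic variety $B_h(V_i)$, never testing membership in a differential variety or primality of a differential ideal directly. This is precisely the step that circumvents the Ritt problem, and it is available only because $B_h(V_i)$ is prolongation admissible, which lets Lemmas~\ref{uniquegenericomponent}--\ref{lem-chowexpression} and Remark~\ref{rem-uniquegenericomponent} convert the differential data into algebraic data to which Fact~\ref{factdef} applies. Finally, the same analysis equips $\mc C_1$ with the tautological family whose fibre over the point representing $\sum_it_iW_i$ is the differential cycle $\sum_it_iV_i$; this family is differentially constructible, and taking fibres realizes the natural isomorphism above, so we in fact obtain a universal family of cycles over $\delta\text{-}\chow(n,d,h,g,m)=\mc C_1$, as announced in the introduction.
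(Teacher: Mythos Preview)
Your proposal is correct and follows essentially the same approach as the paper's proof: both establish the bijection via the unique dominant component (Lemma~\ref{uniquegenericomponent} and Remark~\ref{rem-uniquegenericomponent}) together with Corollary~\ref{degreebound}, and both prove constructibility of $\mc C_1$ by decomposing Chow forms into irreducible factors (Fact~\ref{factdef}(\ref{compmap})), recovering the $W_i$ via Lemma~\ref{lem-chowtoequation}, testing prolongation admissibility (Lemma~\ref{prolongadmissible-definable}), reading off $d$ and $h$ from $\dim W_i$ and $\dim\overline{\pi_{h,h-1}(W_i)}$, and extracting $g_i,m_i$ from the differential Chow form $F_i$ obtained via the elimination of Lemma~\ref{lem-chowexpression} and Fact~\ref{factdef}(\ref{clodef}),(\ref{hypdegvardef}). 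Your version is in fact more careful than the paper's in one place: you explicitly verify that $V_i$ coincides with the dominant component of the prolongation sequence generated by $B_h(V_i)$ (via the sandwich $B_h(V_i)\subseteq B_h(W^{(i)}_j)\subseteq B_h(W^{(i)})=B_h(V_i)$ and comparison of Kolchin polynomials), which the paper only asserts by citation; and you phrase the extraction of $F_i$ ideal-theoretically via Lemma~\ref{lem-chowexpression}, whereas the paper describes the equivalent projection $\pi:U\to(\mathbb P^{(n+1)(h+1)-1})^{d+1}$ geometrically.
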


\begin{proof}
%
%

First, we show $\mc C_1$ is a differentially constructible set.
From the definition of Chow coordinates, we know each $\chow_{n(h+1)}(d(h+1)+h,e)$ actually represents a definable family  $S_e:=(F_c)_{c\in \chow_{n(h+1)}(d(h+1)+h,e)}$ of 
homogenous polynomials which are Chow forms of algebraic cycles in $\mathbb A^{n(h+1)}$ of dimension  $d(h+1)+h$  and degree $e$.
Recall that the Chow coordinate $c$ of a cycle is just the  coefficient vector of the Chow form $F_c$ of this cycle.
By item 5) of Fact \ref{factdef}, the family of irreducible components of the definable family $S_e$ is definable in families.
 Take an arbitrary $c\in \chow_{n(h+1)}(d(h+1)+h,e)$ and the corresponding polynomial $F_c\in S_e$ for an example. 
 Suppose $F_c$ has the irreducible decomposition $F_c=\prod_{i=1}^\ell F_{c,i}^{t_i}$, 
 then each $F_{c,i}$ is the Chow form of an irreducible variety $W_{c,i}$ and $F_c$ is the Chow form of the algebraic cycle $\sum_{i=1}^\ell t_iW_{c,i}$.
To show $\mc C_1$ is differentially constructible, we need to consider each irreducible component $F_{c,i}$ in the above.
By Lemma \ref{lem-chowtoequation}, the family of irreducible varieties $W_{c,i}$ is a definable family.
And by Lemma \ref{prolongadmissible-definable},   the family of irreducible and prolongation admissible varieties $W_{c,i}$ is a definable family.
Let $V_{c,i}$ be the unique dominant component of the differential variety corresponding to the prolongation sequence generated by $W_{c,i}$. 
Then by Lemma \ref{uniquegenericomponent} and Remark \ref{rem-uniquegenericomponent},
the differential dimension of $V_{c,i}$ is equal to $d_1=\dim(W_{c,i})-\dim\big(\pi_{h,h-1}(W_{c,i})\big)$ 
and the order of $V_{c_i}$ is equal to $\dim(W_{c,i})-d_1(h+1)$.
Since algebraic dimension is a definable property, 
the set of Chow coordinates of the  algebraic cycles, 
each of whose irreducible component is prolongation admissible and generates a
prolongation sequence such that the unique dominant component of the differential variety corresponding to this sequence is of differential dimension $d$ and order $h$,  
is a definable set.

Suppose $\delta$-$\dim(V_{c,i})=d$ and $\ord(V_{c,i})=h$. 
Let $U$ be the algebraic variety in $\mathbb{A}^n\times \big(\mathbb{P}^{(n+1)(h+1)-1}\big)^{d+1}$ defined by the defining formulae of $W_{c,i}$ and $L_0^{[h]}=0,\ldots,L_d^{[h]}=0$ with each  $L_i^{(j)}=u_{i0}^{(j)}+\sum_{k=1}^n\sum_{\ell=0}^j{j\choose \ell}u_{ik}^{(\ell)}x_k^{(j-\ell)}$ regarded as a polynomial in  variables $x_{k}^{(j)}$ and $u_{ik}^{(\ell)}$.
Since $B_h(V_{c,i})=W_{c,i}$, by  Lemma \ref{lem-chowexpression}, the Zariski closure of the image of $U$ under the following projection map
$$\pi:\mathbb{A}^n\times \big(\mathbb{P}^{(n+1)(h+1)-1}\big)^{d+1}\longrightarrow \big(\mathbb{P}^{(n+1)(h+1)-1}\big)^{d+1}$$
is an irreducible variety of codimension 1,
and the defining polynomial $F$ of $\overline{\pi(U)}$ is the differential Chow form of $V_{c,i}$.
By item 4) of Fact \ref{factdef}, the total degree of $F$ and $\deg(F,u_{00}^{(h)})$ are both definable in families; these quantities are the differential degree and the leading differential degree of  $V_{c,i}$, respectively.
So  the differential degree and the leading differential degree of  $V_{c,i}$ are definable in families.
Hence, $\mathcal{C}_1$ is a definable set, and also a differentially constructible set due to the fact that the theory DCF$_0$ eliminates quantifiers.

By Lemma~\ref{uniquegenericomponent} and Remark \ref{rem-uniquegenericomponent}, 
each algebraic cycle $\sum s_i W_i$ corresponding to a point of $\mc C_1$ determines a  differential  algebraic cycle 
$  \sum s_i V_i \in \Cc(n,d,h,g,m)(\mc U)$, where $V_i$ is the unique dominant component of the differential variety corresponding to the prolongation sequence generated by $W_i$. And on the other hand, each differential  algebraic cycle 
$\sum s_i V_i\in \Cc(n,d,h,g,m)(\mc U)$
 determines  the corresponding algebraic cycle $\sum s_i B_h(V_i)$, which is an algebraic cycle corresponding to a point of  $\mc C_1$, by Corollary~\ref{degreebound}. 
So we have established a natural one-to-one correspondence between $\Cc(n,d,h,g,m)(\mc U)$ and ${\mathcal C}_1$. 
Thus, $\Cc(n,d,h,g,m)$ is represented by the differentially constructible set  $\delta \text{-} \chow(n,d,h,g,m) := {\mathcal C}_1$.
\end{proof} 
  
%
%
%
%
%

\begin{rem}
For the special case $d=n-1$, the existence of  the differential Chow variety of index $(n-1,h,g,m)$ can be  easily shown from the point of view of differential characteristic sets.
Indeed, note that each order-unmixed radical differential ideal $\mathcal{I}$ of dimension $n-1$ and order $h$ has the prime decomposition  $\mathcal{I}=\bigcap_{i=1}^t\sat(f_i)=\sat(\prod_{i=1}^tf_i)$, 
where $f_i\in K\{x_1,\ldots,x_n\}$ is irreducible and of order $h$.
 Thus, there is a one-to-one correspondence between $\Cc_{(n,n-1,h,g,m)}(K)$  and the set of all differential polynomials
$f\in K\{x_1,\ldots,x_n\}$ such that each irreducible component of $f$ is of order $h$, 
$\deg(f,\{x_1^{(h)}, \ldots, x_n^{(h)} \} )=g$ and the denomination of $f$ is equal to $m$.
Here, the{ \em denomination} of $f$ is the smallest number $r$ such that $x_{0}^rp(x_{1}/x_{0},\ldots,x_{n}/x_{0})\in \mathcal{F}\{x_{0},x_{1},\ldots,x_{n}\}$\cite{kolchin1992problem}.
Since all these characteristic numbers are definable for differential polynomials, $\Cc_{(n,n-1,h,g,m,n)}$ is a definable subset of $\m A^{{m+n(h+1)\choose n(h+1)}}$.
Hence, the differential Chow variety of index $(n-1,h,g,m)$ exists.
\end{rem}

\begin{rem}
We remark here that if the Kolchin closure is proved to be definable in families, then the proof of 
the existence of differential Chow varieties could be  greatly simplified and instead of using the 
algebraic Chow varieties to parameterize differential Chow varieties, one could directly show 
that the differential Chow coordinates of differential cycles of certain index constitute 
a differentially constructible set.

\end{rem}

\bibliography{Research}{}
\bibliographystyle{plain}

\setcounter{section}{0}
\setcounter{thm}{0}
\renewcommand{\theequation}{\thesection.\arabic{equation}}
\setcounter{figure}{0}
\setcounter{table}{0}

\appendix

\section*{Appendix: Geometric irreducibility and Zariski closure are definable in families \\
by William Johnson} 
\renewcommand{\thesection}{A}

In this appendix we establish the results on definability in algebraically closed fields stated as Fact~\ref{factdef} in the main text.  
We assume that the readers are familiar with model theoretic notion and we follow standard model theoretic notations and conventions.  
For example, we write $RM(a/B)$ for the Morley rank of the 
type of $a$ over $B$ and use the nonforking symbol freely.

\subsection{Irreducibility in Projective Space}
Let $\mathbb{U}$ be a monster model of ACF.  
For ${x} \in \Pp^n(\mathbb{U})$, let $\Pp_{{x}}$ be the $(n-1)$-dimensional projective space of lines through ${x}$, and let $\pi_{{x}} : \Pp^n \setminus \{{x}\} \to \Pp_{{x}}$ be the projection.

\begin{lemma}\label{first-lemma}
Let $A$ be a small set of parameters, and suppose ${x} \in \Pp^n(\mathbb{U})$ is generic over $A$.  Suppose $V$ is an $A$-definable Zariski closed subset of $\Pp^n$, of codimension \emph{greater than} 1.  Then $\pi_{{x}}(V)  \subseteq   \Pp_{{x}}$ is well-defined, Zariski closed, of codimension one less than the codimension of $V$.  Moreover, $\pi_{{x}}(V)$ is irreducible if and only if $V$ is irreducible.
\end{lemma}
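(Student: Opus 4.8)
The plan is to prove the statement by projecting generically from the point $x$ and tracking how Morley rank and irreducibility behave under this projection. First I would set up the basic geometry: since $x$ is generic over $A$ and $V$ is $A$-definable of codimension $\geq 2$, the point $x$ does not lie on $V$, nor on any of the (finitely many, $A$-definable) irreducible components of $V$; hence $\pi_x$ is defined on all of $V$ and $\pi_x(V)$ makes sense. The map $\pi_x$ restricted to $V$ has finite fibers generically — indeed, for a point $y \in \Pp_x$, the fiber $\pi_x^{-1}(y) \cap V$ is the intersection of $V$ with the line $\overline{xy}$, and since $x \notin V$ this line is not contained in any component of $V$ (as $x$ is generic, the line through $x$ and a point of $V$ meets $V$ in a proper subvariety of that component), so each such fiber is finite. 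This gives $\dim \pi_x(V) = \dim V$, i.e. the codimension drops by exactly one since $\dim \Pp_x = n-1$. To see that $\pi_x(V)$ is Zariski closed, I would invoke the fact that the image of a projective variety under a morphism is closed (elimination theory / properness of $\Pp^n$), applied over the parameters $A \cup \{x\}$; since the image is then $A\langle x\rangle$-definable and closed, and one can run the standard argument that a generic projection of a closed set from a point off the set is again closed. Equivalently, one can phrase all of this in terms of Morley rank over $A\langle x\rangle$ using that projections with finite fibers preserve Morley rank.

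Next I would establish the equivalence of irreducibility. The easy direction: if $V$ is reducible, say $V = V_1 \cup V_2$ with each $V_i$ a proper $A$-definable closed subset (we may take the components, which are $A$-definable up to permutation and hence their union-decomposition is $A$-definable), then $\pi_x(V) = \pi_x(V_1) \cup \pi_x(V_2)$, and by the dimension computation applied to each piece, each $\pi_x(V_i)$ has dimension $\dim V_i < \dim V = \dim \pi_x(V)$ (using $\dim V_i < \dim V$, which holds since $V$ is equidimensional only if... — here I need to be slightly careful and instead argue that at least one component has dimension $\dim V$, and I show $\pi_x$ of *that* component together with $\pi_x$ of the rest give a nontrivial decomposition of $\pi_x(V)$; the point is that $\pi_x$ is injective enough on the generic part). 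Cleaner: take a generic point $\eta$ of a top-dimensional component $V_1$ over $A\langle x\rangle$; then $\pi_x(\eta)$ is generic in $\pi_x(V_1)$, and I argue $\pi_x(V_1) \neq \pi_x(V)$ when $V$ has another component not contained in $V_1$, by a dimension/genericity count, so $\pi_x(V)$ is reducible.

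The harder direction, which I expect to be the main obstacle, is: if $V$ is irreducible then $\pi_x(V)$ is irreducible. The natural approach is via generic points and fields of definition: let $\eta$ be a generic point of $V$ over $A$, chosen independent from $x$ over $A$; then $\eta$ is generic in $V$ over $A\langle x\rangle$ as well, so $\pi_x(\eta)$ is a generic point of $\pi_x(V)$ over $A\langle x\rangle$, and irreducibility of $\pi_x(V)$ over $A\langle x\rangle$ amounts to $\operatorname{acl}(A\langle x\rangle \pi_x(\eta))$ determining a unique generic type — which it does, since $\pi_x(\eta)$ has a well-defined type. The subtlety is descending irreducibility from $A\langle x\rangle$ down to $A$: a priori $\pi_x(V)$ could be $A$-definable and irreducible over $A\langle x\rangle$ but geometrically reducible in a way that only becomes visible over smaller fields — but actually the worry runs the other way, and the real content is that irreducibility over $A\langle x\rangle$ of the $A\langle x\rangle$-variety $\pi_x(V)$, combined with the fact that $x$ was \emph{generic}, forces the corresponding statement back in $\Pp^n$. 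I would handle this by a symmetry/exchange argument: the construction $V \mapsto \pi_x(V)$ can essentially be reversed by taking the cone (the union of lines through $x$ meeting $\pi_x(V)$, viewed back in $\Pp^n$), whose intersection with a generic hyperplane recovers $V$ up to the relevant data; chasing irreducibility through cone and hyperplane-section operations (using that a generic hyperplane section of an irreducible variety of dimension $\geq 1$ is irreducible — Bertini, which over an algebraically closed field and for the generic hyperplane is clean) closes the loop. The place where care is genuinely needed is ensuring the codimension hypothesis (codim $> 1$, equivalently $\pi_x(V)$ still has codimension $\geq 1$ so the eventual induction in the surrounding argument has room to run) is used correctly and that all the genericity independences ($x$ from $\eta$, the auxiliary hyperplane from everything) are compatible; I would organize these as a short sequence of forking-independence claims at the start so the geometric steps go through without further fuss.
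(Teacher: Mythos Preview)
You have the well-definedness, closedness, and dimension parts essentially right, but you have misidentified where the content lies in the irreducibility equivalence, and this leads to a genuine gap.

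The direction you call ``harder'' --- $V$ irreducible $\Rightarrow \pi_x(V)$ irreducible --- is in fact immediate: the continuous image of an irreducible space is irreducible, and the Zariski closure of an irreducible set is irreducible. No Bertini, no cones, no descent worry. (Your concern about ``descending from $A\langle x\rangle$ to $A$'' is a red herring: $\pi_x(V)$ lives in $\Pp_x$, which is only defined over $Ax$ anyway, so irreducibility is always meant over $\acl(Ax)$.)

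The actual content is entirely in what you call the ``easy'' direction: showing that distinct irreducible components $C_i, C_j$ of $V$ have images with $\pi_x(C_i) \not\subseteq \pi_x(C_j)$. You recognize that your first attempt (via $\dim V_i < \dim V$) fails for equidimensional $V$, and your ``cleaner'' attempt trails off into ``a dimension/genericity count'' without saying what the count is. This is the gap. The paper fills it with a single clean claim: if $c$ is generic in a component $C$ over $Ax$, then $c$ is the \emph{unique} preimage in all of $V$ of $\pi_x(c)$. The proof is a two-line Morley-rank computation: if some other $d \in V$ had $\pi_x(d) = \pi_x(c)$, then $x$ would lie on the line $\overline{cd}$, forcing $RM(x/Acd) \leq 1$; but $c \forkindep_A x$ gives $RM(x/Ac) = n$, and then $n \leq RM(xd/Ac) \leq 1 + RM(V) < n$ --- and \emph{this} inequality is exactly where the hypothesis $\operatorname{codim} V > 1$ is used. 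Once you have this uniqueness, both the dimension equality (generic $c$ interdefinable with $\pi_x(c)$ over $Ax$) and the preservation of the number of components ($\pi_x(C_i) \subseteq \pi_x(C_j)$ would force a second preimage of $\pi_x(c)$ inside $C_j$) fall out immediately.
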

\begin{proof}
Replacing $A$ with $\acl(A)$, we may assume $A$ is algebraically closed, implying that the irreducible components of $V$ are also $A$-definable.

Since ${x}$ is generic, and $V$ has codimension \emph{at least} 1, ${x} \notin V$ so $\pi_{{x}}(V)$ is well-defined.  It is Zariski closed because $\Pp^n$ is a complete variety, so $V$ is complete and the image of $V$ under any morphism of varieties is closed.

\begin{claim}
Let $C$ be any irreducible component of $V$, and let ${c} \in V$ realize the generic type of $C$, over $A {x}$.  Then ${c}$ is the sole preimage in $V$ of $\pi_{{x}}({c})$.
\end{claim}
\begin{proof}
The generic type of $C$ is $A$-definable, so ${c} \forkindep_A {x}$, and therefore $RM({x}/A {c}) = RM({x}/A) = n$.  Suppose for the sake of contradiction that there was a second point ${d} \in V$, ${d} \ne {c}$, satisfying
\[ \pi_{{x}}({d}) = \pi_{{x}}({c}).\]
This means exactly that the three points ${c}$, ${d}$, and ${x}$ are colinear.  Then ${x}$ is on the 1-dimensional line determined by ${c}$ and ${d}$, so
\[ RM({x}/A {c} {d}) \le 1.\]
But then
\[ n = RM({x}/A {c}) \le RM({x}{d}/A {c}) = RM({x}/A {c} {d}) + RM({d}/A {c}) \le 1 + RM(V) < n,\]
by the codimension assumption.
\end{proof}

Using the claim, we see that $\pi_{{x}}(V)$ and $V$ have the same dimension (= Morley rank).  Indeed, let ${v} \in V$ have Morley rank $RM(V)$ over $A {x}$.  Then ${v}$ realizes the generic type of \emph{some} irreducible component $C$, so by the claim, ${v}$ is interdefinable over $A {x}$ with $\pi_{{x}}({v})$.  But then
\[ RM(\pi_{{x}}(V)) \ge RM(\pi_{{x}}({v})/A {x}) = RM({v}/A {x}) = RM(V),\]
and the reverse inequality is obvious.  So the codimension of $\pi_{{x}}(V)$ is indeed one less.

Let $C_1, \ldots, C_m$ enumerate the irreducible components of $V$ (possibly $m = 1$).  Each of the components $C_i$ is a closed subset of $\m P^n$, and so by completeness each of the images $\pi_{{x}}(C_i)$ is a Zariski closed subset of $\Pp_{{x}}$. The image of each of the components is irreducible, on general grounds.  If $\pi_{{x}}(C_i)  \subseteq   \pi_{{x}}(C_j)$ for some $i \ne j$, then the generic type of $C_i$ would have the same image under $\pi_{{x}}$ as some point in $C_j$, contradicting the Claim.  So $\pi_{{x}}(C_i) \not \subseteq   \pi_{{x}}(C_j)$ for $i \ne j$.  It follows that the images $\pi_{{x}}(C_i)$ are the irreducible components of
\[ \pi_{{x}}(V) = \bigcup_{i = 1}^m \pi_{{x}}(C_i).\]
Therefore, $\pi_{{x}}(V)$ and $V$ have the same number of irreducible components, proving the last point of the lemma.
\end{proof}

\begin{theorem}\label{first-theorem}
Let $X_{{a}}  \subseteq   \mathbb{P}^n$ be a definable family of Zariski closed subsets of $\mathbb{P}^n$.  Then the set of ${a}$ for which $X_{{a}}$ is irreducible, is definable.
\end{theorem}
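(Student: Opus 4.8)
The plan is to argue by induction on the codimension $c$ of the sets $X_{a}$, using Lemma~\ref{first-lemma} to reduce the codimension by one at each step, at the price of projecting from a generic point of $\Pp^{n}$.

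Before starting the induction I would split the index set of the family into finitely many definable pieces on which $\dim X_{a}$, and hence $c=n-\dim X_{a}$, is constant; this is allowed because algebraic dimension is definable in families (classical; it also appears as part of Fact~\ref{factdef}). On the piece where $X_{a}=\emptyset$ there is nothing to prove, the empty set being by convention not irreducible, and on the piece $c=0$ we have $X_{a}=\Pp^{n}$, so the irreducibility condition is identically true. The remaining base case is $c=1$, where $X_{a}$ is a hypersurface of degree bounded uniformly over the family (by compactness); irreducibility of such a hypersurface is a definable condition, e.g. because it holds iff a squarefree defining form of $X_{a}$ — itself obtainable definably — is irreducible, and irreducibility of forms of bounded degree is expressible by a bounded existential quantifier over factorizations. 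These dispose of the start of the induction.

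For the inductive step, assume $c\ge 2$ and the theorem known in codimension $c-1$. I would introduce a new variable $x$ ranging over $\Pp^{n}$, fix on a finite definable cover of the $x$-space an identification of the pencil $\Pp_{x}$ with $\Pp^{n-1}$, and note that for $x\notin X_{a}$ the image $\pi_{x}(X_{a})$ is Zariski closed by completeness of $\Pp^{n}$ and, being a definable set, is cut out by a formula $\psi(z;a,x)$. Thus $(\{z:\psi(z;a,x)\})_{(a,x)}$ is a definable family of closed subsets of $\Pp^{n-1}$; by Lemma~\ref{first-lemma} its member at $(a,x)$ equals $\pi_{x}(X_{a})$ and has codimension $c-1$ whenever $x$ is generic over $a$. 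Applying the inductive hypothesis to this family produces a formula $\chi(a,x)$ defining the locus where $\pi_{x}(X_{a})$ is irreducible. By Lemma~\ref{first-lemma}, for $x$ generic over $a$ the set $X_{a}$ is irreducible iff $\pi_{x}(X_{a})$ is, iff $\chi(a,x)$ holds; equivalently, $X_{a}$ is irreducible iff the definable set $\{x\in\Pp^{n}:\neg\chi(a,x)\}$ contains no generic point of $\Pp^{n}$, i.e. has dimension $<n$. Definability of dimension in families, applied to $(\{x:\neg\chi(a,x)\})_{a}$, makes this a definable condition on $a$, which closes the induction.

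The step I expect to be the main obstacle is the bookkeeping in this projection, rather than any genuinely new idea: one must honestly present $(\pi_{x}(X_{a}))_{(a,x)}$ as a definable family of closed subsets of a single fixed $\Pp^{n-1}$, which forces a definable choice of coordinates on the moving space $\Pp_{x}$ (manageable over a finite definable partition of the $x$'s), and one must invoke the inductive hypothesis only on the sub-family living in codimension $c-1$ — that is, after discarding the $x$ with $x\in X_{a}$ and the $x$ not generic over $a$, and after checking that both discarded loci have dimension $<n$, so that they cannot spoil the ``dimension $<n$'' test above. All the Morley-rank estimates needed to justify that ``irreducible for generic $x$'' correctly detects irreducibility of $X_{a}$ are exactly the ones already carried out in Lemma~\ref{first-lemma}.
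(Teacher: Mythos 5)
Your proposal is correct and follows essentially the same route as the paper: induction on codimension, using Lemma~\ref{first-lemma} to replace $X_a$ by its projection from a generic point, with the base case reduced to definability of irreducibility for polynomials of bounded degree. The only cosmetic differences are that the paper handles the codimension-one base case by a complementary ind-definability argument rather than an explicit degree bound, and it cites definability of types in stable theories where you unwind that appeal into the equivalent concrete test that the locus $\{x:\neg\chi(a,x)\}$ has dimension $<n$.
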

\begin{proof}
Dimension is definable in families, because ACF is strongly minimal.  So we may assume that all (non-empty) $X_{{a}}$ have the same (co)dimension.  We proceed by induction on codimension, allowing $n$ to vary.

For the base case of codimension one, we note the following: 
\begin{enumerate}
\item \label{en:a} The family of Zariski closed subsets of $\mathbb{P}^n$ is ind-definable, that is a small (i.e. less than the size of the monster model) union of definable families, because the Zariski closed subsets are exactly the zero sets of finitely-generated ideals.
\item \label{en:b} Using \ref{en:a}, the family of \emph{reducible} Zariski closed subsets of $\mathbb{P}^n$ is also ind-definable, because a definable set is a \emph{reducible} Zariski closed set if and only if it is the union of two incomparable (with respect to containment) Zariski closed sets.
\item \label{en:c} Whether or not a polynomial in $\mathbb{C}[y_1,\ldots,y_{n+1}]$ is irreducible, is definable in terms of the coefficients, because we only need to quantify over lower-degree polynomials.
\item \label{en:d} A hypersurface in $\mathbb{P}^n$ is irreducible if and only if it is equal to the zero-set of an irreducible homogeneous polynomial.  It follows by \ref{en:c} that the family of irreducible codimension 1 closed subsets of $\mathbb{P}^n$ is ind-definable.
\item By \ref{en:b} (resp. \ref{en:d}), the set of ${a}$ such that $X_{{a}}$ is reducible (resp. irreducible) is ind-definable.  Since these two sets are complementary, both are definable, proving the base case.
\end{enumerate}

For the inductive step, suppose that irreducibility is definable in families of codimension one less than $X_{{a}}$.  By choosing an isomorphism between $\Pp_{{x}}$ and $\Pp^{n-1}$, one easily verifies the definability of the set of $({x},{a})$ such that $\pi_{{x}}(X_{{a}})$ is irreducible and has codimension one less.

By Lemma~\ref{first-lemma}, $X_{{a}}$ is irreducible if and only if $({x},{a})$ lies in this set, for generic ${x}$.  Definability of types in stable theories then implies definability of the set of ${a}$ such that $X_{{a}}$ is irreducible.
\end{proof}

\begin{corollary}\label{first-corollary}
The family of irreducible closed subsets of $\mathbb{P}^n$ is ind-definable.
\end{corollary}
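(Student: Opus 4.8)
The plan is to deduce the corollary directly from Theorem~\ref{first-theorem} together with the observation recorded as item~\ref{en:a} in the proof of its base case. First I would recall that the collection of all Zariski closed subsets of $\mathbb{P}^n$ is ind-definable: every closed $V \subseteq \mathbb{P}^n$ is the common zero set $V(f_1,\ldots,f_N)$ of finitely many homogeneous polynomials, and once we bound their degrees by some $d \in \mathbb{N}$ (and choose a length $N = N(n,d)$ large enough to generate any homogeneous ideal generated in degrees $\leq d$), the tuples of coefficients of such lists range over a fixed affine variety $B_d$, yielding a definable family $(X_a)_{a \in B_d}$ whose members are exactly the closed subsets cut out by forms of degree $\leq d$. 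Since every closed subset of $\mathbb{P}^n$ occurs in the $d$-th family for some $d$, the class of all closed subsets equals $\bigcup_{d \in \mathbb{N}} \{ X_a : a \in B_d \}$, a countable (hence small) union of definable families.

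Next, for each fixed $d$ I would apply Theorem~\ref{first-theorem} to the definable family $(X_a)_{a \in B_d}$: this gives that $\mathrm{Irr}_d := \{ a \in B_d : X_a \text{ is irreducible} \}$ is a definable subset of $B_d$. Restricting the $d$-th family to this definable set of parameters produces a definable family $(X_a)_{a \in \mathrm{Irr}_d}$ consisting precisely of the irreducible closed subsets of $\mathbb{P}^n$ cut out by forms of degree $\leq d$. Taking the union over $d$, the family of all irreducible closed subsets of $\mathbb{P}^n$ is $\bigcup_{d \in \mathbb{N}} \{ X_a : a \in \mathrm{Irr}_d \}$, again a countable union of definable families, which is exactly what it means to be ind-definable.

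There is essentially no obstacle here beyond bookkeeping, so the "hard part" is really just making sure the two inputs are quoted in the right form. The one point deserving a line of care is that the chosen presentation of closed sets by coefficient tuples is not injective — different parameters $a$ may name the same closed set — but this is harmless, since ind-definability only demands a small union of definable families covering the class, with no uniqueness of representatives. If canonical representatives were wanted one could instead pass to radical (saturated) homogeneous ideals and invoke elimination of imaginaries in ACF, but that refinement is unnecessary for the statement as phrased.
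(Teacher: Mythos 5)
Your argument is correct and is essentially the paper's own proof: the paper likewise notes that the closed subsets of $\mathbb{P}^n$ form an ind-definable family and then applies Theorem~\ref{first-theorem} to select the irreducible members within each definable subfamily. You have merely spelled out the degree-stratified presentation and the (harmless) non-injectivity of parameters, which the paper leaves implicit.
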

\begin{proof}
The family of closed subsets is ind-definable, and by Theorem~\ref{first-theorem} we can select the irreducible ones within any definable family.
\end{proof}

\begin{corollary}\label{second-corollary}
The family of pairs $(X,\overline{X})$ with $X$ definable and $\overline{X}$ its Zariski-closure, is ind-definable.
\end{corollary}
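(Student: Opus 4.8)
The plan is to deduce this from Corollary~\ref{first-corollary} together with the definability of types in $\mathrm{ACF}$. First I would reduce it to a statement about a single definable family. Since every definable subset of $\m A^n$ is a Boolean combination of Zariski closed sets, the family of \emph{all} definable subsets of $\m A^n$ is ind-definable (cut it into definable families by bounding the number and degrees of the defining polynomials). Hence it is enough to fix a definable family $\{X_a\}_{a\in B}$ and produce a single formula $\phi(z;y)$ with $\phi(\mc U;a)=\overline{X_a}$ for every $a\in B$; that is, it is enough to prove that Zariski closure is definable in families in the sense of part~\ref{clodef} of Fact~\ref{factdef}.

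Working in a saturated model and replacing $a$ by $\acl(a)$, the starting point is the identity
\[
\overline{X_a}\;=\;\bigcup_{c\in X_a}\operatorname{loc}(c/\acl(a)),
\]
where $\operatorname{loc}(c/\acl(a))$ is the irreducible Zariski closed locus of $c$ over $\acl(a)$, and where the \emph{maximal} loci occurring on the right are exactly the irreducible components of $\overline{X_a}$: each component $C$ meets $X_a$ in a dense set, so a generic realization of the generic type of $C$ lies in $X_a$ and has locus $C$. Because $\mathrm{ACF}$ is strongly minimal, dimension and Morley degree are definable, and bounded, in the family $\{X_a\}$, so the number $r$ of top-dimensional components of $\overline{X_a}$ is uniformly bounded (it equals the Morley degree of $X_a$). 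Choosing realizations $c_1,\dots,c_r$ of the generic types of $X_a$ over $\acl(a)$ and setting $T_a:=\operatorname{loc}(c_1/\acl(a))\cup\cdots\cup\operatorname{loc}(c_r/\acl(a))$, Corollary~\ref{first-corollary} places each $\operatorname{loc}(c_i/\acl(a))$ in the ind-definable family of irreducible closed sets, and definability of types in the stable theory $\mathrm{ACF}$ shows that a coding parameter for $\operatorname{loc}(c_i/\acl(a))$ is a definable function of $(a,c_i)$; absorbing the choice of the $c_i$ (which lie over $\acl(a)$) and partitioning $B$ according to the value of $r$, we obtain a coding parameter for $T_a$ depending definably on $a$.

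I would then recover the lower-dimensional components by induction on $\dim(X_a)$, letting $n$ vary. The key identity is
\[
\overline{X_a}\;=\;T_a\cup\overline{X_a\setminus T_a},
\]
valid because any irreducible component of $\overline{X_a}$ not contained in $T_a$ has its generic point in $X_a\setminus T_a$, while a generic point of $X_a\setminus T_a$ cannot have full dimension (it would generate a top-dimensional component, which lies in $T_a$), so $\dim(X_a\setminus T_a)<\dim(X_a)$. Since $\{X_a\setminus T_a\}_{a\in B}$ is again a definable family of strictly smaller dimension, the inductive hypothesis gives a coding parameter for $\overline{X_a\setminus T_a}$ depending definably on $a$, and a union of two closed sets each ranging over the ind-definable family of closed sets again ranges, uniformly, over an ind-definable family of closed sets. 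The base case $\dim=0$ is immediate: $X_a$ is then finite of bounded size and equals its own closure.

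The main obstacle is the bookkeeping around $\acl(a)$: the components of $\overline{X_a}$ and the chosen generic points $c_i$ are in general defined only over $\acl(a)$, not over $a$, so some care is needed to see that the canonical object $\overline{X_a}$ — which is automatically $a$-definable — has its coding parameter extracted uniformly in $a$. This is precisely the role of definability of types, applied over $\acl(a)$ and then descended, exactly as in the proof of Theorem~\ref{first-theorem}. A conceptually cleaner route that avoids the induction would be to first establish a uniform bound on $\deg(\overline{X_a})$ in terms of the family and then simply search, inside the bounded-degree portion of the ind-definable family of closed sets, for the smallest closed superset of $X_a$; but proving such a degree bound seems essentially as hard as the statement itself, so I expect the inductive argument above to be the more economical one.
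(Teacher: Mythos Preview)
Your argument is essentially correct, but it takes a markedly more elaborate route than the paper does. The paper's proof is a two-line consequence of quantifier elimination: write $X=\bigcup_{i=1}^m (C_i\cap U_i)$ with each $C_i$ irreducible closed, $U_i$ open, and $C_i\cap U_i\neq\emptyset$; irreducibility of $C_i$ forces $\overline{C_i\cap U_i}=C_i$, whence $\overline{X}=\bigcup_i C_i$, and Corollary~\ref{first-corollary} makes the family of such presentations ind-definable. There is no induction on dimension, no choice of generic points, and no further appeal to definability of types.

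Your approach instead rebuilds $\overline{X_a}$ layer by layer, peeling off the top-dimensional part $T_a$ and recursing on $X_a\setminus T_a$. The step you yourself flag---``absorbing the choice of the $c_i$'' so that a code for $T_a$ depends definably on $a$---is the one that needs more than a wave at definability of types. What is really required is a uniform bound on the degree of $T_a$ (equivalently, confining its code to finitely many pieces of the ind-definable family of irreducible closed sets), after which a routine compactness argument finishes the job. That bound is obtainable, but the most natural way to obtain it is exactly the QE normal form the paper exploits directly: the top-dimensional components of $\overline{X_a}$ are among the irreducible $C_i$ appearing in the normal form, and those have degree bounded by the formula defining the family. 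So the paper's argument is in effect the missing lemma that makes your induction go through, and once one has it the induction becomes unnecessary. Your approach does buy something conceptually---it identifies $T_a$ intrinsically via generic types rather than via a syntactic presentation---but at the cost of the extra bookkeeping you anticipated.
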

\begin{proof}
By quantifier elimination in ACF, any definable set $X$ can be written as a union of sets of the form $C \cap U$ with $C$ closed and $U$ open.  Replacing $V$ with a union of irreducible components, and distributing, we can write $X$ as a union $\bigcup_{i = 1}^m C_i \cap U_i$, with $C_i$ Zariski closed and $U_i$ Zariski open.  We may assume that $C_i \cap U_i \ne \emptyset$ for each $i$, or equivalently, that $C_i \setminus U_i \ne C_i$.

In any topological space, closure commutes with unions, so
\[ \overline{X} = \bigcup_{i = 1}^n \overline{C_i \cap U_i}.\]
Now $\overline{C_i \cap U_i}  \subseteq   \overline{C_i} = C_i$, and
\[ C_i = \overline{C_i \cap U_i} \cup (C_i \setminus U_i),\]
so by irreducibility of $C_i$, $\overline{C_i \cap U_i} = C_i$.  Therefore,
\[ \overline{X} = \bigcup_{i = 1}^n C_i.\]

Corollary~\ref{first-corollary} implies the ind-definability of the family of pairs
\[ \left(\bigcup_{i = 1}^n \overline{C_i \cap U_i}, \bigcup_{i = 1}^n C_i \right) \]
with $C_i$ irreducible closed, $U_i$ open, and $C_i \cap U_i \ne \emptyset$.  We have seen that this is the desired family of pairs.
\end{proof}

The following corollary is an easy consequence:
\begin{corollary}\label{main-point}
Let $X_{{a}}$ be a definable family of subsets of $\mathbb{P}^n$.  Then the Zariski closures $\overline{X_{{a}}}$ are also a definable family.
\end{corollary}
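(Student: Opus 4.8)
The plan is to deduce Corollary~\ref{main-point} from Corollary~\ref{second-corollary} by a routine compactness argument, using that an ind-definable family is by definition a small union of definable families. Fix the given definable family $\{X_a\}_{a \in B}$, presented by formulae $\psi(x;y)$ and $\theta(y)$ as in the definition of a definable family, so that $B$ is the realization set of $\theta$ and $X_a = \psi(\mathbb{U};a)$ for $a \in B$. By Corollary~\ref{second-corollary} the collection of pairs $(X,\overline{X})$, with $X$ definable in $\mathbb{P}^n$ and $\overline{X}$ its Zariski closure, is ind-definable; write it as $\bigcup_{i \in I}\mathcal{P}_i$ with $|I|$ small and each $\mathcal{P}_i$ a genuine definable family of pairs, where concretely $\mathcal{P}_i$ records a definable set together with the common zero set of a tuple of polynomials of some fixed degree.

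First I would note that for every $a \in B$ the pair $(X_a,\overline{X_a})$ lies in $\bigcup_{i \in I}\mathcal{P}_i$. Then, since $B$ is definable and each $\mathcal{P}_i$ is definable while $|I|$ is small, saturation of the monster model forces finitely many indices $i_1,\dots,i_k$ to suffice: for all $a \in B$ one has $(X_a,\overline{X_a}) \in \mathcal{P}_{i_1}\cup\dots\cup\mathcal{P}_{i_k}$. Each $\mathcal{P}_{i_j}$ supplies, by its definition, a formula $\phi_j(z;y)$ such that whenever $(X_a,\overline{X_a}) \in \mathcal{P}_{i_j}$ the closed set $\overline{X_a}$ equals $\phi_j(\mathbb{U};a)$. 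Because $\overline{X_a}$ is uniquely determined by $X_a$, these formulae agree wherever more than one applies, so a definable case split, selecting for each $a$ some $j$ with $(X_a,\overline{X_a}) \in \mathcal{P}_{i_j}$, patches them into a single formula $\phi(z;y)$ with $\overline{X_a} = \phi(\mathbb{U};a)$ for all $a \in B$. This is exactly the assertion that $\{\overline{X_a}\}_{a \in B}$ is a definable family.

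The hard part is purely organizational: one has to pin down the formalism of a \emph{definable family of pairs of subsets of $\mathbb{P}^n$} carefully enough that the compactness step is legitimate, i.e.\ confirm that the ind-definable object of Corollary~\ref{second-corollary} genuinely decomposes as a small union of honest definable families and that restricting to the sub-family indexed by $B$ and then gluing the finitely many surviving pieces can both be carried out definably. Elimination of imaginaries in ACF makes this harmless, since closed subsets of $\mathbb{P}^n$ of bounded degree carry canonical parameters in the home sort; no further geometric input beyond Corollary~\ref{second-corollary} is required.
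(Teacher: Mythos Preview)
Your proposal is correct and is exactly the routine compactness argument the paper has in mind: the paper gives no proof at all beyond calling it ``an easy consequence'' of Corollary~\ref{second-corollary}, and your derivation via compactness from ind-definability is the standard way to unpack that remark. The only minor wrinkle worth noting is that the formula $\phi_j(z;y)$ you extract from $\mathcal{P}_{i_j}$ is not literally handed to you in the variable $y$ but must be obtained by existentially quantifying over the native parameter of $\mathcal{P}_{i_j}$ subject to it presenting the same set as $\psi(x;a)$; you allude to this in your final paragraph, and it goes through without difficulty.
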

\subsection{Irreducibility in Affine Space}
\begin{theorem} \label{mainapp}
Let $X_{{a}}$ be a definable family of subsets of affine $n$-space.
\begin{enumerate}
\item The family of Zariski closures $\overline{X_{{a}}}$ is also definable.
\item The set of ${a}$ such that $\overline{X_{{a}}}$ is irreducible is definable.  More generally, the number of irreducible components of $\overline{X_{{a}}}$ is definable in families (and bounded in families).
\item Dimension and Morley degree of $X_{{a}}$ are definable in ${a}$.
\item If each $\overline{X_{{a}}}$ is a hypersurface given by the irreducible polynomial $F_{{a}}(y_1,\ldots,y_n)$, then the degree of $F_{{a}}$ in each $y_i$ is definable in ${a}$.  In fact, the polynomials $F_{{a}}$ have bounded total degree and the family of $F_{{a}}$ (up to scalar multiples) is definable.
\item The family of irreducible components of the 
Zariski closure is definable in families. 
\end{enumerate}
\end{theorem}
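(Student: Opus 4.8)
The plan is to bootstrap each of the five clauses from the projective results already proved — Theorem~\ref{first-theorem} and Corollaries~\ref{first-corollary}, \ref{main-point}, \ref{second-corollary} — via the standard open immersion $\mathbb{A}^n \hookrightarrow \mathbb{P}^n$, $(y_1,\dots,y_n)\mapsto[1:y_1:\cdots:y_n]$. Composing with this immersion turns $\{X_a\}$ into a definable family of subsets of $\mathbb{P}^n$; write $Y_a\subseteq\mathbb{P}^n$ for the projective Zariski closure of $X_a$, so that the affine closure is $\overline{X_a}=Y_a\cap\mathbb{A}^n$. Corollary~\ref{main-point} gives that $\{Y_a\}$ is definable, hence so is $\{\overline{X_a}\}$, which is clause (1). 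The geometric dictionary underlying the remaining parts is that $X_a$ is dense in $Y_a$: if $\overline{X_a}=\bigcup_i(X_a)_i$ is the irreducible decomposition of the affine closure, then $Y_a=\bigcup_i\overline{(X_a)_i}$ (projective closures), each piece irreducible, the decomposition still irredundant (intersect back with $\mathbb{A}^n$). So the irreducible components of $\overline{X_a}$ and of $Y_a$ are in degree-preserving bijection; in particular $\overline{X_a}$ is irreducible iff $Y_a$ is, and the first half of (2) is immediate from Theorem~\ref{first-theorem} applied to $\{Y_a\}$.

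For the counting in (2), and for (3) and (5), I would introduce a single uniform definable family of low-degree irreducible varieties. By quantifier elimination in ACF, each $\overline{X_a}$ is a finite union of irreducible components of varieties cut out by a fixed finite collection of polynomials of bounded degree; hence by the B\'ezout-type inequality of \cite[Theorem~1]{Heintz} there is a single $D$, independent of $a$, bounding the degree — and therefore the degree of every irreducible component — of every $Y_a$. By \cite[Proposition~3]{Heintz} an irreducible subvariety of $\mathbb{P}^n$ of degree $\leq D$ is defined by at most $n+1$ forms of degree $\leq D$, so these varieties form a single definable family $\{Z_c\}_{c\in P}$: parametrise by $(n+1)$-tuples of coefficient vectors of such forms, then — using Theorem~\ref{first-theorem} and the definability of degree in ACF — keep those $c$ for which the common zero locus is irreducible of degree $\leq D$. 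Within $\{Z_c\}$, the relation ``$Z_c$ is an irreducible component of $Y_a$'' is first-order: $\emptyset\neq Z_c\subseteq Y_a$ and there is no $c'\in P$ with $Z_c\subsetneq Z_{c'}\subseteq Y_a$. Hence the number of irreducible components of $Y_a$, equivalently of $\overline{X_a}$, is definable in $a$ and bounded by $D$, completing (2). Dimension of $X_a$ is definable since ACF is strongly minimal, and the Morley degree of $X_a$ is the number of components of $\overline{X_a}$ of maximal dimension, hence definable by the above — this is (3). For (5), intersecting the projective components with $\mathbb{A}^n$ gives the affine ones, so $\{\,Z_c\cap\mathbb{A}^n : c\in P,\ Z_c\text{ a component of }Y_a\,\}$ is the required definable family of irreducible components of $\overline{X_a}$; one may use elimination of imaginaries in ACF to repackage this uniformly finite set of varieties, each coded by its bounded list of bounded-degree equations, as a single tuple.

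For (4), assume each $\overline{X_a}$ is an irreducible hypersurface $V(F_a)$. Its total degree is $\deg F_a\leq D$, so the coefficient vector of $F_a$ lies, up to scaling, in a fixed projective space of polynomials of total degree $\leq D$ in $y_1,\dots,y_n$. Since $I(\overline{X_a})=(F_a)$, a polynomial $G$ of degree $\leq D$ vanishes on $\overline{X_a}$ exactly when $F_a\mid G$, and among such $G$ the nonzero ones of least total degree are precisely the scalar multiples of $F_a$; the condition ``$G$ vanishes on $\overline{X_a}$ and is of least total degree among polynomials vanishing on $\overline{X_a}$'' is definable in $(G,a)$, pinning down $[F_a]$ and exhibiting $\{[F_a]\}$ as a definable family. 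Finally, for fixed $i$, $\deg(F_a,y_i)$ equals the cardinality of $\overline{X_a}\cap L$ where $L$ is the line through a point generic over $a$ in the direction of the $y_i$-axis: irreducibility of $F_a$ forces its restriction to such an $L$ to be a separable univariate polynomial of degree $\deg(F_a,y_i)$, so the intersection is finite of that size, and cardinalities of uniformly finite definable sets are definable in families in ACF. Thus $a\mapsto\deg(F_a,y_i)$ is definable.

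The step I expect to be the real obstacle is the construction in the second paragraph: producing a genuinely \emph{uniform} definable family of irreducible components, rather than merely the definability of irreducibility. This is where one must combine the passage to $\mathbb{P}^n$ (to exploit completeness and the already-established projective results), the degree bound $D$ (to confine all components to a single definable family of low-degree varieties), and an effective B\'ezout/Heintz input (to realise that family by boundedly many bounded-degree equations). Once these are in place, the first-order rendering of ``maximal irreducible closed subset of $Y_a$'' and the elimination-of-imaginaries repackaging are routine, and the remaining clauses — (1), the affine/projective component dictionary, dimension and Morley degree, and the recovery of $F_a$ from its zero set — follow quickly.
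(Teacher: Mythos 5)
Your proposal is correct and follows essentially the same route as the appendix: pass to $\mathbb{P}^n$, invoke the already-established projective results, and use the B\'ezout/Heintz degree bound together with \cite[Proposition 3]{Heintz} to confine all irreducible components to a single definable family of bounded-degree varieties, within which ``component of $\overline{X_a}$'' is a first-order maximality condition. The only divergences are cosmetic: where the paper settles for ind-definability plus a compactness argument in parts (2)--(4), you build the uniform family explicitly from the outset, and your generic-line count of $\deg(F_a,y_i)$ in part (4) is a valid (characteristic-zero, where separability of the generic fibre is automatic) but unnecessary detour, since the per-variable degrees can be read directly off the coefficient tuple of $F_a$, whose definability you have already established.
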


\begin{proof}
\begin{enumerate}
\item Embed $\mathbb{A}^n$ into $\mathbb{P}^n$.  Then the Zariski closure of $X_{{a}}$ within $\Aa^n$ is the intersection of $\Aa^n$ with the closure within $\Pp^n$.  Use Corollary~\ref{main-point}.
\item The number of irreducible components of the Zariski closure is the same whether we take the closure in $\Aa^n$ or $\Pp^n$.  This proves the first sentence.  The first sentence yields the ind-definability of the family of irreducible Zariski closed subsets of $\Aa^n$, from which the second statement is an exercise in compactness.
\item We may assume $X_{{a}}$ is closed, since taking the closure changes neither Morley rank nor Morley degree.  The family of $d$-dimensional Zariski irreducible closed subsets of $\Aa^n$ is ind-definable, making this an exercise in compactness.
\item Whether or not an $n$-variable polynomial is irreducible is definable in the coefficients, because to check reducibility one only needs to quantify over the (definable) set of lower-degree polynomials.  This makes the family of irreducible polynomials ind-definable.  Therefore, the set of pairs $({a},F_{{a}})$ where $F_{{a}}$ cuts out $\overline{X_{{a}}}$, is ind-definable.  For any given ${a}$, all the possibilities for $F_{{a}}$ are essentially the same, differing only by scalar multiples.  So the total degree of $F_{{a}}$ only depends on ${a}$, and compactness yields a bound on the total degree.  This in turn makes the set of pairs $({a},F_{{a}})$ definable.

\item By \cite[Proposition 3]{Heintz}, every irreducible subvariety of $\m A^n$ which is of codimension $d$ is given (set-theoretically) by the intersection the zero sets of $n+1$ polynomials, whose degrees are bounded by the degree of the variety.\footnote{By results of Storch \cite{storch1972bemerkung} or Eisenbud and Evans \cite{eisenbud1973every} and a short argument, one can improve this to $d+1$ polynomials. Degree bounds are not given in \cite{storch1972bemerkung} or \cite{eisenbud1973every}. For our purposes the existence of such bounds is what matters, so we will not pursue the details further, but merely note that the bounds of \cite{Heintz} are not tight in this case.} 
Since the degree of a family of varieties is uniformly bounded by the product, $D$, of the degrees of the defining polynomials and the number of components is bounded by the degree, there are at most $D$ many maximal irreducible subvarieties, each of which has degree less than or equal to $D$. Among such zero sets, the components of the variety are those which are maximal and irreducible. 
\end{enumerate}
\end{proof}

\end{document}